\DeclareMathAlphabet{\mathpzc}{OT1}{pzc}{m}{it}
\newcommand{\R}{\mathbb{R}}
\newcommand{\C}{\mathbb{C}}
\newcommand\Z{\mathbb{Z}}
\newcommand{\N}{\mathbb{N}}
\newcommand{\Q}{\mathbb{Q}}
\renewcommand{\S}{\mathbb{S}}
\newcommand{\Ab}{\mathbf{A}}
\newcommand{\Bb}{\mathbf{B}}
\newcommand{\Hb}{\mathbf{H}}
\newcommand{\Pb}{\mathbb{P}}
\newcommand{\Tb}{\mathbf{T}}
\newcommand{\Bcal}{\mathcal{B}}
\newcommand{\Ccal}{\mathcal{C}}
\newcommand{\Ecal}{\mathcal{E}}
\newcommand{\Lcal}{\mathcal{L}}
\newcommand{\Mcal}{\mathcal{M}}
\newcommand{\Scal}{\mathcal{S}}
\newcommand{\Tcal}{\mathcal{T}}
\newcommand{\Ucal}{\mathcal{U}}
\newcommand{\Vcal}{\mathcal{V}}
\newcommand{\Wcal}{\mathcal{W}}
\newcommand{\PP}{\mathscr{P}}
\newcommand{\RR}{\mathscr{R}}
\newcommand{\Alp}{\mathcal{A}}
\newcommand{\Aut}{{\rm Aut}}
\newcommand{\card}{{\rm card}}
\newcommand{\id}{\mathrm{id}}
\newcommand{\Id}{\mathrm{Id}}
\newcommand{\Aa}{\textrm{Area}}
\newcommand{\inter}{\mathrm{int}}
\newcommand{\dd}{\mathbf{d}}
\newcommand{\vol}{{\rm vol}}
\newcommand{\ff}{\mathbf{f}}
\newcommand{\pp}{\mathbf{p}}
\newcommand{\qq}{\mathbf{q}}
\newcommand{\rr}{\mathbf{r}}
\newcommand{\proj}{\mathrm{pr}}
\newcommand{\hX}{\hat{X}}
\newcommand{\hM}{\hat{M}}
\newcommand{\homg}{\hat{\omega}}
\newcommand{\hSig}{\hat{\Sigma}}
\newcommand{\hTcal}{\hat{\mathcal{T}}}
\newcommand{\hZ}{\hat{Z}}
\newcommand{\hx}{\hat{x}}
\newcommand{\vide}{\varnothing}
\newcommand{\Sig}{\Sigma}
\newcommand{\sig}{\sigma}
\newcommand{\g}{\gamma}
\newcommand{\del}{\delta}
\newcommand{\veps}{\varepsilon}
\newcommand{\ol}{\overline}
\newcommand{\ul}{\underline}
\newcommand{\ra}{\rightarrow}
\newcommand{\smin}{\setminus}
\newcommand{\strate}{\Omega^d\Mcal_{g,n}(\kappa)}
\newcommand{\pstrate}{\Mcal_{g,n}(\kappa)}
\newcommand{\lstrate}{\Omega\hat{\Mcal}^{\langle \zeta \rangle}_{g,n}(\kappa)}
\newcommand{\ilstrate}{\Omega_1\hat{\Mcal}^{\langle \zeta \rangle}_{g,n}(\kappa)}
\newcommand{\lstrateabdiff}{\Omega\Mcal_{\hat{g}}(\hat{\kappa})}
\newcommand{\strateabdiff}{\Omega\Mcal_{g,n}(\kappa)}
\newtheorem{Theorem}{Theorem}[section]
\newtheorem{Corollary}[Theorem]{Corollary}
\newtheorem{Lemma}[Theorem]{Lemma}
\newtheorem{Proposition}[Theorem]{Proposition}
\newtheorem{Definition}[Theorem]{Definition}
\newtheorem{Claim}[Theorem]{Claim}
\theoremstyle{remark}
\newtheorem{Remark}[Theorem]{Remark}
\begin{document}
\title[Volume forms on moduli spaces of $d$-differentials]{Volume forms on moduli spaces of $d$-differentials}

\author{Duc-Manh Nguyen}

\address{Institut de Math\'ematiques de Bordeaux IMB,\newline
CNRS UMR 5251\newline
Universit\'e de Bordeaux, B\^at. A33\newline
351, cours de la Lib\'eration \newline
33405 Talence Cedex\newline
France}
\email[D.-M.~Nguyen]{duc-manh.nguyen@math.u-bordeaux.fr}

\date{\today}
\begin{abstract}
Given $d\in \N, g\in \N\cup\{0\}$, and an integral vector $\kappa=(k_1,\dots,k_n)$ such that $k_i>-d$ and $k_1+\dots+k_n=d(2g-2)$, let $\strate$ denote the moduli space of meromorphic $d$-differentials on Riemann surfaces of genus $g$ whose zeros and poles have orders prescribed by $\kappa$. We show that $\strate$ carries a canonical volume form that is parallel with respect to its  affine complex manifold (orbifold) structure, and that the total volume of $\Pb\strate=\strate/\C^*$ with respect to the measure induced by this volume form is finite.
\end{abstract}

\maketitle

%

\section{Introduction}
\subsection{Moduli space of $d$-differentials}\label{sec:intro:mod:ps}
Given a compact Riemann surface $X$, we denote by $K_X$ its canonical line bundle.
Let $d$ be a positive integer.
A meromorphic $d$-differential on $X$ is a meromorphic section of $K_X^{\otimes d}$.
For any  $g \in \Z_{\geq 0}$, and any integral vector $\kappa=(k_1,\dots,k_n)$  such that $k_i>-d$ and $k_1+\dots+k_n=d(2g-2)$, let $\strate$ denote the space of pairs $(X,q)$, where $X$ is a compact Riemann surface of genus $g$, and $q$ is a non-zero meromorphic $d$-differential on $X$ whose zeros and poles have orders prescribed by $(k_1,\dots,k_n)$. In particular, $q$ has exactly $n$ zeros and poles, and all the poles of $q$ have order at most $d-1$.

The space $\strate$ is called a {\em stratum} of  $d$-differentials in genus $g$. Each stratum may have several components. To lighten the notation, throughout this paper, by $\strate$ we will mean a connected component of the corresponding stratum.

There is a natural $\C^*$-action on $\strate$ by multiplying to the $d$-differential a non-zero scalar. We will denote by $\Pb\strate$ the projectivization of $\strate$, that is $\Pb\strate=\strate/\C^*$.

\medskip

The strata $\Omega^1\Mcal_{g,n}(\kappa)$ and $\Omega^2\Mcal_{g,n}(\kappa)$ are involved in various domains like dynamics in Teichm\"uller spaces, interval exchange transformations, billiards in rational polygons, and have now become classical objects of study. Recently, the connections between the moduli spaces of $d$-differentials, for $d \in \{2,3,4,6\}$, with tiling problems on surfaces has been brought to light in the works~\cite{ES18, Engel-I}. More generally, $\Pb\strate$ are of interest since they arise as natural subvarieties of $\Mcal_{g,n}$ (see for instance \cite{FP18, Sch16}). Holomorphic $d$-differentials also appear in the study of Hitchin representations of fundamental group of surfaces \cite{Lab17}.

It is a well known fact that $\Omega^1\Mcal_{g,n}(\kappa)$ and $\Omega^2\Mcal_{g,n}(\kappa)$ are algebraic orbifolds, which admit an affine complex orbifold structure (there is an atlas  whose charts map open subsets onto finite quotients of open subsets in a complex vector space, and transition maps are given by complex linear maps).
Moreover $\Omega^1\Mcal_{g,n}(\kappa)$ and $\Omega^2\Mcal_{g,n}(\kappa)$ carry a natural volume form $d\vol^*$, which is called the {\em Masur-Veech volume}.
By definition, $d\vol^*$ is parallel with respect to the affine orbifold structure, that is in a local chart of the affine structure $d\vol^*$ differs from the Lebesgue measure by a constant.

The Masur-Veech volume induces a volume form $d\vol^*_1$ on $\Pb\Omega^1\Mcal_{g,n}(\kappa)$ and $\Pb\Omega^2\Mcal_{g,n}(\kappa)$
as follows: denote by $\Omega_1^1\Mcal_{g,n}(\kappa)$ (resp. $\Omega_1^2\Mcal_{g,n}(\kappa)$) the set of $(X,q)\in \Omega^1\Mcal_{g,n}(\kappa)$ (resp. $(X,q)\in \Omega^2\Mcal_{g,n}(\kappa)$) such that the area of $X$ with respect to the flat metric defined by $q$ is at most $1$. 
Then $d\vol^*_1$ is the pushforward of the restriction of $d\vol^*$ to $\Omega^1_1\Mcal_{g,n}(\kappa)$ and $\Omega^2_1\Mcal_{g,n}(\kappa)$ to $\Pb \Omega^1\Mcal_{g,n}(\kappa)$ and $\Pb\Omega^2\Mcal_{g,n}(\kappa)$ respectively.
It is a classical result due to Masur and Veech that the total volumes of $\Pb\Omega^1\Mcal_{g,n}(\kappa)$ and $\Pb\Omega^2\Mcal_{g,n}(\kappa)$ with respect to $d\vol^*_1$ are finite. This result is fundamental for the study of Teichm\"uller dynamics on moduli spaces of Abelian and quadratic differentials.

\medskip

For  $d\geq 3$,  the spaces $\strate$ have been studied by several authors from both flat metric and complex algebraic geometry points of view (see for  instance \cite{Veech:FS,Th98,McM_cyclic,BCGGM2,Sch16,GP17}).  In particular, it has been shown that each stratum $\strate$ is a complex orbifold, and if there exists  $(X,q) \in \strate$   such that $q$ is \ul{not} a $d$-th power of an Abelian differential on $X$ then (see \cite{Veech:FS,BCGGM2,Sch16})
$$
\dim_\C\strate=2g+n-2.
$$
Moreover,  $\strate$ also has a complex affine orbifold structure which is defined in the same manner as $\Omega^2\Mcal_{g,n}(\kappa)$.

In this paper, we  extend the result of Masur and Veech to the case $d \geq 3$. Our main result is the following

\begin{Theorem}~\label{thm:main}
For all $d\in \N$, $\strate$ carries a canonical parallel volume form $d\vol$ with respect to its affine  structure. 
Let $d\vol_1$ be the induced  volume form on $\Pb\strate$.
Then  the total volume of $\Pb\strate$ with respect to $d\vol_1$ is finite.
\end{Theorem}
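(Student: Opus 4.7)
The plan is to realize $\strate$ via a cyclic cover as an affine linear subvariety of a stratum of Abelian differentials, to define $d\vol$ by combining natural Hermitian forms on the two pieces of the eigenspace extension of relative cohomology, and to deduce the finiteness of $\vol_1(\Pb\strate)$ from the classical Masur-Veech finiteness on the larger stratum.

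I first extend to arbitrary $d$ the construction sketched for $d=2$ in the excerpt. For each $(X,q)\in\strate$, the canonical cyclic $d$-fold cover $\pi:\hat X\to X$ comes with an order-$d$ automorphism $\tau$ and an Abelian differential $\hat\omega$ on $\hat X$ satisfying $\pi^*q=\hat\omega^d$ and $\tau^*\hat\omega=\zeta\hat\omega$ for $\zeta:=e^{2\pi\imath/d}$. This yields an embedding $\Xi:\strate\hookrightarrow\lstrateabdiff:=\Omega\Mcal_{\hat g}(\hat\kappa)$, and in local period coordinates the image $\Xi(\strate)$ is identified with the $\tau^*$-eigenspace
\[
V_\zeta:=H^1(\hat X,\hat Z(q),\C)^\zeta,
\]
a $\C$-linear subspace of the ambient period chart on $\lstrateabdiff$.

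To define $d\vol$, apply the long exact sequence of the pair $(\hat X,\hat Z(q))$ to write
\[
0\to V_\zeta^{\mathrm{pts}}\to V_\zeta\to V_\zeta^{\mathrm{abs}}\to 0,
\]
with $V_\zeta^{\mathrm{abs}}:=H^1(\hat X,\C)^\zeta$ and $V_\zeta^{\mathrm{pts}}\cong H^0(\hat Z(q),\C)^\zeta$. On $V_\zeta^{\mathrm{abs}}$ the cup product combined with complex conjugation gives a non-degenerate Hermitian form, non-degenerate because $\tau^*$-invariance of cup product forces $V_\zeta^{\mathrm{abs}}$ to pair non-trivially only with $V_{\bar\zeta}^{\mathrm{abs}}$. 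On $V_\zeta^{\mathrm{pts}}$ the $\ell^2$-form coming from the counting measure on $\hat Z(q)$ induces a non-degenerate Hermitian form. Each Hermitian form determines a canonical Lebesgue volume form on its $\C$-vector space via its real discriminant, and volume forms compose along short exact sequences of $\C$-vector spaces, so these assemble into a canonical Lebesgue volume form $d\vol$ on $V_\zeta$. Both Hermitian forms are topological invariants of $(\hat X,\hat Z(q),\tau)$, so $d\vol$ is constant in period coordinates and hence parallel; the monodromy of $\strate$ is realized by mapping classes of $(\hat X,\tau,\hat Z(q))$ that commute with $\tau^*$, preserve cup product, and permute $\hat Z(q)$, and so preserve both Hermitian forms and therefore $d\vol$. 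This shows $d\vol$ is well-defined, canonical, and parallel.

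For the finiteness of $\vol_1(\Pb\strate)$, compare via $\Xi$ with the Masur-Veech volume $d\vol^*$ on $\lstrateabdiff$. Since $|\hat\omega|^2=\pi^*|q|^{2/d}$, the areas satisfy $\mathrm{Area}(\hat X,\hat\omega)=d\,\mathrm{Area}(X,q)$, so the area-$\leq 1$ locus of $\strate$ lifts into the area-$\leq d$ locus of $\Xi(\strate)\subset\lstrateabdiff$. Locally $H^1(\hat X,\hat Z(q),\C)$ splits orthogonally along the $\tau^*$-eigenspaces with respect to the Hermitian structure, and $d\vol^*$ factors as a product of Lebesgue measures, one factor being $d\vol$. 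A transversal $\eps$-thickening of the area-$\leq 1$ lift is contained, by continuity of area, in the area-$\leq d+1$ locus of $\lstrateabdiff$, and the Fubini comparison then yields
\[
d\vol\bigl(\{(X,q)\in\strate:\mathrm{Area}\leq 1\}\bigr)\;\leq\; C\eps^{-c}\cdot d\vol^*\bigl(\{(\hat X,\hat\omega)\in\lstrateabdiff:\mathrm{Area}\leq d+1\}\bigr),
\]
where $c=\dim_\R\lstrateabdiff-\dim_\R\strate$ and $C$ bounds the transversal volume ratio. The right-hand side is finite by the classical Masur-Veech theorem on $\lstrateabdiff$.

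The principal obstacle is the uniform lower bound on the transversal Masur-Veech measure needed for the Fubini comparison: one must verify that the transversal $\eps$-ball around each point of the lifted area-$\leq 1$ locus has $d\vol^*$-measure at least a positive multiple of $\eps^c$, uniformly in the base point. This amounts to establishing a compactness or Siegel-Veech style control on the transversal directions to $\Xi(\strate)$ inside $\lstrateabdiff$ over area-bounded subsets, ensuring that the thickening does not collapse. While the analogous control is classical for Abelian and quadratic differentials, its verification in the general $d$-differential setting relies on a direct analysis using the explicit orthogonal $\tau^*$-eigenspace decomposition of relative cohomology.
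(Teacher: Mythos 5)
Your construction of $d\vol$ is essentially the paper's. You use the same exact sequence $0\to\ker\pp\cap V_\zeta\to V_\zeta\to H^1(\hat{X},\C)^\zeta\to 0$, the same non-degeneracy of the intersection form on the $\zeta$-eigenspace of absolute cohomology (Lemma~\ref{lm:inter:form:non:degen} in the paper), and a canonical volume on the kernel; the paper realizes the latter via relative cycles $c_i$ joining $\hat{s}_i$ to $T_\zeta(\hat{s}_i)$ together with the normalizing factor $1/|1-\zeta|^{2r}$, whereas you use the counting-measure Hermitian form on $H^0(\hat{Z}(q),\C)^\zeta$; both are monodromy-invariant and differ only by a constant, so this half of your argument is sound (modulo the orbifold issues the paper handles by passing to finite covers).

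The finiteness argument is where you diverge, and there is a genuine gap there. You want to bound $\vol(\istrate)$ by the Masur--Veech volume of an area-bounded subset of $\lstrateabdiff$ via an $\eps$-thickening of the lifted locus in the transversal (non-$\zeta$-eigenspace) directions and Fubini. For the inequality to run in the direction you need, the thickening map from (lifted locus)$\,\times B_\eps$ into $\lstrateabdiff$ must be injective, or at least of uniformly bounded multiplicity: an infinite-volume submanifold can perfectly well have a finite-volume $\eps$-neighbourhood if that neighbourhood overlaps itself. The eigenlocus is closed but non-compact, and its normal injectivity radius inside the \emph{moduli} space $\lstrateabdiff$ is not bounded below near the degenerating ends (short saddle connections, long thin cylinders) --- which is exactly the regime where infinite volume could arise and exactly where self-overlap of the tube is to be expected. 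You correctly flag this as ``the principal obstacle,'' but you give no argument for it, and establishing it would amount to redoing a Masur--Smillie-type degeneration analysis on the eigenlocus itself. In other words, the reduction to the classical Masur--Veech theorem does not actually bypass the hard part.

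The paper avoids the ambient stratum entirely: it covers $\ilstrate$ by finitely many sets indexed by $T_\zeta$-invariant Delaunay triangulations together with admissible families of simple cycles in the dual graph recording the long cylinders (Propositions~\ref{prop:inv:Delaunay:tria} and~\ref{prop:triangulation:fin:chart}), shows that in the corresponding linear chart of $V_\zeta$ every coordinate is either bounded by a universal constant or confined to a rectangle of area $8$ dual to a long cylinder (Lemmas~\ref{lm:sc:cross:long:cyl} and~\ref{lm:vol:fin}), and concludes by Fubini \emph{inside} $V_\zeta$. If you want to salvage your route, you would need to prove a uniform lower bound on the normal injectivity radius of the eigenlocus over area-bounded sets, and I see no easier path to that than the intrinsic Delaunay argument.
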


\begin{Remark}\label{rmk:main:thm}\hfill
\begin{itemize}
\item[(i)] In the case $d\in \{1,2\}$, the existence of a parallel volume form on $\strate$ is deduced from the fact that the transition maps of the affine structure preserve a lattice in $\C^N$. This phenomenon also occurs in the case $d\in \{3,4,6\}$. However for general $d$ such a lattice does not exist, hence the existence of $d\vol$ is not obvious.

 \item[(ii)] In the construction of $d\vol$, one needs  to fix a primitive $d$-th root $\zeta$ of unity. The apparent dependence on the choice of $\zeta$ can be easily removed by a correction factor (see Proposition~\ref{prop:vol:form:def}).

 \item[(iii)] A different approach to define volume forms on moduli spaces of flat surfaces in general can be found in \cite{Veech:FS}. In a sense, our definition of $d\vol$ is an adaptation of this construction in the case of flat surfaces defined by $d$-differentials.

 \item[(iv)] By construction, the volume form $d\vol$ agrees, up to a multiplicative constant, with the Masur-Veech measure on $\strate$ when $d=1$. In Section~\ref{sec:vol:form:compare}, we compute this constant explicitly (see Proposition~\ref{prop:compare:v:form:d:1}).

 \item[(v)]   In~\cite{Engel-I}, Engel introduces other volume forms on $\strate$ which generalize the Masur-Veech measure for $d\in \{3,4,6\}$. Those volume forms arise naturally in the counting of tilings of surfaces by triangles, squares, or hexagons. We will show that those volume forms always differ from $d\vol$ by a constant in $\Q\cup\sqrt{3}\Q$ (see Proposition~\ref{prop:vol:const:rat}).

 \item[(vi)] In the case $g=0$, the finiteness of the volume of $\Pb\strate$ (with respect to some volume forms  equivalent to $d\vol_1$)  has been proved in \cite{Veech:FS} and \cite{Th98} (see also \cite{Ng1,Ng2}).
\end{itemize}
\end{Remark}

\subsection{Outline}\label{sec:outline}
\subsubsection{Definition of the volume form}\label{subsec:outline:def:v:form}
We will now give a brief description of the volume forms $d\vol$ and $d\vol_1$, and refer to Section~\ref{sec:vol:form} for more details.

Let $(X,q)$ be an element of $\strate$. Let $Z:=\{x_1,\dots,x_n\}$ be  the set of zeros and poles of $q$, where the order of $x_i$ is $k_i$.
Throughout  this paper, the points in $Z$ are always supposed to be numbered.

Pick a primitive $d$-th root $\zeta$ of the unity.
Associated to the pair $(X,q)$ we have a triple $(\hX,\homg,\tau)$, where $\hX$ is a cyclic covering of degree $d$ over $X$, $\homg$ is an Abelian differential on $\hX$ such that the pullback of $q$ to $\hX$ is equal to $\homg^d$, and $\tau$ is an automorphism of order $d$ of $\hX$ such that $X \simeq \hX/\langle \tau \rangle$ and $\tau^*\homg=\zeta\homg$.  The triple $(\hX,\homg,\tau)$ will be called the {\em canonical cyclic cover} of $(X,q)$.

Denote by $\hZ$ the inverse image of $Z$ in $\hX$.
Consider $V_\zeta:=\ker(\tau-\zeta\id) \subset  H^1(\hX,\hZ,\C)$.
A neighborhood of $(X,q)$ in $\strate$ can be identified with the quotient of an open subset of $V_\zeta$ by $\Aut(X,q)$ (see Section\ref{sec:loc:coord:per}). In particular, we have
$$
\dim \strate=\dim V_\zeta=N.
$$
Let $\pp: H^1(\hX,\hZ,\C) \ra H^1(\hX,\C)$ be the natural projection.
Let $Z_0$ be the subset of $Z$ consisting of $x_i\in Z$ such that $k_i\in d\Z$.
Then $\dim \ker\pp\cap V_\zeta=\card(Z_0)$ if $d\geq 2$ (see Proposition~\ref{prop:ker:project:coh}).
Without loss of generality, let us assume that $Z_0=\{x_1,\dots,x_r\}$.

For $i=1,\dots,r$, pick a point $\hx_i$ in the inverse image of $\{x_i\}$ in $\hX$.
Let $c_i$ be a path from $\hx_i$ to $\tau(\hx_i)$.
We consider $c_i$ as an element of $H_1(\hX,\hZ,\Z)\subset H^1(\hX,\hZ,\C)^*$.
Denote by $c_i^\zeta$  the restriction of $c_i$ to the subspace $V_\zeta$.

The intersection form on $H_1(\hX,\Z)$ defines a Hermitian product $(.,.)$ on $H^1(\hX,\C)$.
Let $\vartheta$ denote the imaginary part of $(.,.)$.
Define
\begin{equation}\label{eq:intro:def:Theta}
\Theta:=\frac{1}{|1-\zeta|^{2r}}\cdot\frac{1}{(N-r)!}\left(\frac{\imath}{2}\right)^r \pp_{|V_\zeta}^*\vartheta^{N-r}\wedge c_1^\zeta\wedge\bar{c}_1^\zeta\wedge\dots\wedge c_r^\zeta\wedge \bar{c}_r^\zeta \in \Lambda^{N,N}V^*_\zeta.
\end{equation}
Then $\Theta$ gives a well defined volume form $d\vol$ on $\strate$.  Note that the factor $1/|1-\zeta|^{2r}$ is introduced so that eventually $\Theta$ does not depend on the choice of $\zeta$ (see Lemma~\ref{lm:vol:form:Vzeta} and Proposition~\ref{prop:vol:form:def}).

\begin{Remark}\label{rk:alter:def:vol:intro}
It is worth noticing that in \eqref{eq:intro:def:Theta}, $\Theta$ is defined as the restriction of an $(N,N)$-form on $H^1(\hX,\hZ,\C)$ to $V_\zeta$.
An alternative method to define the volume form $d\vol$ is as follows: consider the following exact sequence of cohomology with coefficients in $\C$
\begin{equation*}
0 \to H^0(\hX)\to H^0(\hZ) \to H^1(\hX,\hZ) \to H^1(\hX) \to 0.
\end{equation*}
The automorphism $\tau$ acts equivariantly on those spaces.
Let $H$ stand for one of the cohomology spaces in the exact sequence above.
Denote by $H_\zeta$ the $\zeta$-eigenspace of the action of $\tau$ on $H$.
Since $\zeta\neq 1$, we have $H^0(\hX)_\zeta=\{0\}$, and the exact sequence above induces the following
\begin{equation*}
0 \to  H^0(\hZ)_\zeta \to H^1(\hX,\hZ)_\zeta \to H^1(\hX)_\zeta \to 0.
\end{equation*}
Thus, to define a volume form on $V_\zeta =H^1(\hX,\hZ)_\zeta$, it is enough to give a volume form on $H^0(\hZ)_\zeta$ and a volume form on $H^1(\hX)_\zeta$ (see Remark~\ref{rk:def:vol:ex:seq} for more details).
\end{Remark}

By a slight abuse of notation, let us denote by $(.,.)$ the pullback of the intersection form on $H^1(\hX,\C)$ to $H^1(\hX,\hZ,\C)$ by $\pp$.
Let $\Pb V_\zeta$ denote the projective space of $V_\zeta$,
and $\Pb V_\zeta^+ \subset \Pb V_\zeta$ be the set of lines $\C\cdot v$, with $v\in V_\zeta$ such that $(v,v) >0$.
We get a volume form $\Theta_1$ on $\Pb V_\zeta^+$ from $\Theta$ as follows:
given a subset $U$ of $\Pb V_\zeta^+$, let $C(U)$ denote the cone over $U$, that is $C(U)=\{v\in V_\zeta, \; \C\cdot v \in U\}$. Let $C_1(U)$ denote the intersection of $C(U)$ with the set $\{v \in V_\zeta, 0 < (v,v) \leq 1\}$. Then $\Theta_1(U)$ is defined to be $\Theta(C_1(U))$.
It is not difficult to check that $\Theta_1$  gives a well defined volume form $d\vol_1$ on $\Pb\strate$.

\subsubsection{Finiteness of the total volume}\label{subsec:outline:finite:vol}
To prove that the volume of $\Pb\strate$ with respect to $d\vol_1$ is finite, we will consider the space $\lstrate$ of triples $(\hX,\homg,\tau)$ that are canonical cyclic covers of the elements of $\strate$.
By construction, there is a bijection $\PP: \lstrate \ra \strate$ which sends $(\hat{X},\hat{\omega},\tau)$ to $(X,q)$ (see Lemma~\ref{lm:ab:diff:eig:vect}).
Locally, one can identify open subsets of $\lstrate$ with open subsets of $V_\zeta$.
Hence $\Theta$  gives rise to a well defined volume form on $\lstrate$  which is also denoted by $d\vol$.
Set
$$
\ilstrate:=\{(\hat{X},\hat{\omega},\tau) \in \strate, \; \Aa(\hat{X},\hat{\omega}) \leq 1\}.
$$
By definition, we have $\vol_1(\Pb\strate)=\vol(\ilstrate)$.
We will show that $\vol(\ilstrate)$ is finite.
To this end, we  make use of Delaunay triangulations of flat surfaces, and follow a strategy similar to the one in \cite{MS91}.

Given $(\hX,\homg, \tau) \in \lstrate$, we  equip $\hX$ with the flat metric defined by $\homg$. 
Denote by $\hZ$ the zero set of $\homg$.
By definition $\tau$ corresponds to an isometry of order $d$ of $\hX$ preserving $\hZ$.
Call a cylinder on $\hX$ a {\em long cylinder} if its height is greater than a universal constant times the square-root of the area of $\hX$ (see Def.~\ref{def:long:cyl}).
It is not difficult to see that long cylinders are pairwise disjoint.

Let $\hTcal$ be a Delaunay triangulation of the pair $(\hX,\hZ)$ invariant by $\tau$.
The edges of $\hTcal$  have length  bounded by the square-root of the area of $\hX$, except those that cross some long cylinders.
Moreover, each edge of $\hTcal$ can cross at most one long cylinder, and can not cross the same long cylinder twice (see Prop.~\ref{prop:long:sc:high:cyl} and Lem.~\ref{lm:sc:cross:long:cyl}).
It follows that the set of edges that cross a fixed long cylinder corresponds to a {\em simple cycle} in the dual graph of $\hTcal$, that is the image of an injective continuous map from $\S^1$ to the dual graph.
We abusively call a set of edges in $\hTcal$ a {\em simple cycle} if this family is dual to a simple cycle in the dual graph.
Each long cylinder (if exists) corresponds uniquely to a simple cycle in $\hTcal$, and the simple cycles corresponding to two distinct long cylinders are disjoint.
Call a collection $\tilde{\gamma}$ simple cycles in $\hTcal$ {\em admissible} if
\begin{itemize}
 \item this collection  is $\tau$ invariant,

 \item the simple cycles in $\tilde{\gamma}$  are pairwise disjoint.
\end{itemize}
Let $N_1$ be the number of geometric edges of $\hTcal$ (recall that a geometric edge of a graph corresponds to a pair of inversely directed edges).
Note that   $N_1$  is completely determined by the genus of $\hX$ and the cardinality of $\hZ$.
We identify $\C^{2N_1}$ with the space of complex valued functions on the set of directed edges of $\hTcal$ .
To each pair $(\hTcal,\tau)$ together with an admissible collection  $\tilde{\gamma}$ of simple cycles, we specify an open subset $U$ in a linear subspace $W\subset \C^{2N_1}$, and define a locally homeomorphic map $\Psi$ from $U$ to $\lstrate$.
We also specify an open subset $U^1$ of $U$ such that  $\Psi^{-1}(\ilstrate) \subset U^1$.
Because the set of pairs $(\hTcal,\tau)$ is finite, and given such a pair the set of admissible collections of simple cycles is finite, it follows that the map $\Psi$ belongs to a finite set.

Every flat surface in $\lstrate$ admits a Delaunay triangulation  invariant by $\tau$ (see Prop.~\ref{prop:inv:Delaunay:tria}). Therefore, $\lstrate$ is covered by the images of the maps $\Psi$ as above.
It follows that $\ilstrate$ is covered by the finite  family  of open subsets $\{\Psi(U^1)\}$ of $\lstrate$.
Since the volume form $\vol$ on $\lstrate$ differs from the Lebesgue measure on $U^1$ by a constant, the finiteness of $\vol(\ilstrate)$ follows from the fact that $U^1$ has finite Lebesgue volume.

Recall that we identify $\C^{2N_1}$ with the space of complex valued functions on the set of directed edges of $\hTcal$.
To show that the volume of $U^1$ is finite, we choose an appropriate family of $N$ directed edges $\{e_1,\dots,e_N\}$ in $\hTcal$ such that the map from $\C^{2N_1}$ to $\C^N$ sending $z\in \C^{2N_1}$ to $(z(e_1),\dots,z(e_N))\in \C^N$ induces an isomorphism from $W$ onto $\C^N$.
We then split the set $\{e_1,\dots,e_N\}$ into two subsets: $\{e_1,\dots,e_k\}$ is the set of edges that are contained in one of the simple cycles in the collection $\tilde{\gamma}$, they correspond to edges that cross some long cylinders, and $\{e_{k+1},\dots,e_N\}$ are the remaining edges.
By  properties of Delaunay triangulations, $(z(e_{k+1}),\dots,z(e_N))$ is contained in a ball (of finite radius) in $\C^{N-k}$, and once $(z(e_{k+1}),\dots,z(e_N))$ is fixed, each of $z(e_1),\dots,z(e_k)$ is contained in a rectangle whose area is uniformly bounded.
Using Fubini theorem, we conclude that $U^1$ has finite volume, and Theorem~\ref{thm:main} follows.

\subsection*{Organization} The paper is organized as follows, in Section~\ref{sec:top:prelim} and Section~\ref{sec:loc:coord:per} we recall the classical constructions of the period mappings which define  the complex affine orbifold structure on $\strate$ (see Proposition~\ref{prop:eigen:sp:loc:coord}). The results in these sections are not new, we include their proof for the completeness, and more importantly, to settle the framework of the subsequent discussion. In Section~\ref{sec:proj:abs:coh}, we study the restriction of the projection $\pp: H^1(\hat{M},\hat{\Sig},\C) \ra H^1(\hat{M},\C)$ to $V_\zeta$. In particular, we determine $\ker(\pp_{|V_\zeta})$ and $\mathrm{Im}(\pp_{|V_\zeta})$. In Section~\ref{sec:vol:form}, we show that $\Theta$ is a volume form on $V_\zeta$, which gives rise to a well defined volume form $d\vol$ on stratum proving the first part of Theorem~\ref{thm:main}.
In Section~\ref{sec:Delaunay}, we recall basic properties of the Delaunay triangulations on flat surfaces, which will be used in the proof of the second part of Theorem~\ref{thm:main}. Finally, in Section~\ref{sec:proof:finite} we will give the proof that $\vol_1(\Pb\lstrate)$ is finite.

\subsection*{Notation and convention} Throughout this paper, $M$ will be an oriented, compact, closed, connected surface of genus $g$, $\Sig=\{s_1,\dots,s_n\}$ is a finite subset of cardinality $n$ of $M$, $M'=M\setminus\Sig$. We will always suppose that
\begin{equation}\label{eq:Euler:char:neg}
\chi(M')=2-2g-n <0.
\end{equation}
Given $(X,q) \in \strate$, the zeros and poles of $q$ are always supposed to be numbered. This numbering is specified by a homeomorphism between the pairs $(M,\Sig)$ and $(X,Z(q))$.
Some of the numbers $k_1,\dots,k_n$ may be $0$, in which case the corresponding points are marked points on the surface $X$ that are neither zero nor pole of $q$. In particular, they can be chosen arbitrarily on the surface $X$.

The dimension of $\strate$ is denoted by $N$, and $\zeta$ is a fixed primitive $d$-th root of unity.


\section{Topological preliminaries}\label{sec:top:prelim}
\subsection{Cyclic covering}\label{sec:cyclic:cov}
We will now introduce some topological constructions which will allow us to define local charts for $\strate$.
Pick a base point $s_0\in M'$.  Let $\{\alpha_1,\beta_1,\dots,\alpha_g,\beta_g\}$  be a standard generating set of $\pi_1(M,s_0)$, that is
\begin{equation}\label{eq:gen:set:pi1:M}
\pi_1(M,s_0) \simeq \langle \alpha_1,\beta_1,\dots,\alpha_g,\beta_g \; \big| \prod_{i=1}^g [\alpha_i,\beta_i]=1\rangle.
\end{equation}
Note that  all the $\alpha_i,\beta_i$ can be represented by loops missing the set $\Sig$. For $i=1,\dots,n$, let $\del_i$ be an element of $\pi_1(M',s_0)$ represented by a loop that is freely homotopic to the boundary of a small disc about $s_i$ such that the following holds

\begin{equation}\label{eq:gen:set:pi1:Mp}
\pi_1(M',s_0) \simeq \langle \alpha_1,\beta_1,\dots,\alpha_g,\beta_g,\del_1,\dots,\del_n \; \big| \prod_{i=1}^g[\alpha_i,\beta_i]\cdot\prod_{j=1}^n \del_j=1\rangle.
\end{equation}

In what follows, for any $d \in \N$, we will identify the group $\Z/d\Z$ with $\{e^{\frac{2\pi\imath}{d}k}, \; k=0,\dots,d-1\}$ using the identification $k \mapsto e^{\frac{2\pi\imath}{d}k}$.
Let $\veps: \pi_1(M',s_0) \ra \Z/d\Z$ be a group morphism.
Since $\Z/d\Z$ is Abelian, \eqref{eq:gen:set:pi1:Mp} implies
\begin{equation}\label{eq:cond:hom:at:sing}
\prod_{j=1}^n \veps(\del_j)=1.
\end{equation}

Assume that $\veps$ is surjective.
Then $\Gamma=\ker\veps$ is a normal subgroup of index $d$ in $\pi_1(M',s_0)$.
Let $\widetilde{M'}$ be the universal cover of $M'$, and $\hat{M}':=\widetilde{M'}/\Gamma$.
By construction, there is a covering map $\pi: \hat{M}' \ra M'$ of degree $d$.

Since  $\hat{M}'$ is of finite type, it is homeomorphic to a punctured surface $\hat{M}\setminus\hat{\Sig}$, where $\hat{M}$ is a compact closed surface, and $\hat{\Sig}$ is a finite subset of $\hat{M}$. The covering map $\pi$ extends to a (continuous) map $\pi: \hat{M} \ra M$ such  that $\pi^{-1}(\Sig)=\hat{\Sig}$. For any $i\in \{1,\dots,n\}$, the cardinality of  $\pi^{-1}(\{s_i\})$ can be computed as follows: let $d_i$ be the order of $\veps(\del_i)$ in $\Z/d\Z$, then $\card(\pi^{-1}(\{s_i\}))=\frac{d}{d_i}$. The genus $\hat{g}$ of $\hat{M}$ can be computed by the Riemann-Hurwitz formula. Namely,
\begin{equation}\label{eq:cov:gen:R-H}
2\hat{g}-2=d(2g-2)+\sum_{i=1}^n\frac{d}{d_i}(d_i-1) \Leftrightarrow \hat{g}=d(g-1)+1+\frac{1}{2}\left(nd-\sum_{i=1}^n \frac{d}{d_i}\right).
\end{equation}

%

Pick a base point $\hat{s}_0$ in $\hat{M}'$ such that $\pi(\hat{s}_0)=s_0$. Since the covering $\pi: \hat{M}' \ra M'$ is Galoisian, any element $\alpha \in \pi_1(M',s_0)$ lifts to an automorphism of $\pi$, that is a homeomorphism $T_\alpha: \hat{M}' \ra \hat{M}'$ such that $\pi=T_\alpha\circ\pi$. By construction, the loops that represent $\alpha$ lift to  paths from $\hat{s}_0$ to $T_\alpha(\hat{s}_0)$ in $\hat{M}'$.
Note that $T_\alpha$ can also be defined as the action of $\alpha$ on the quotient $\widetilde{M'}/\Gamma$.
It is not difficult to check that the map $T_\alpha$ extends to a homeomorphism from $\hat{M}$ to itself preserving the set $\hat{\Sig}$.

If $\alpha'$ is another element of $\pi_1(M',s_0)$ such that $\veps(\alpha)=\veps(\alpha')$, then  $T_{\alpha'}=T_\alpha$, since $\alpha'\cdot\alpha^{-1}\in \Gamma$. Thus the homeomorphism $T_\alpha$ depends only on $\veps(a) \in \Z/d\Z$. In what follows, given $\zeta \in \Z/d\Z$, we will denote the homeomorphism of $\hat{M}$  associated with $\zeta$ by $T_\zeta$. By construction, we have $T^d_\zeta=\id_{\hat{M}}$, and $T^k_\zeta \neq \id_{\hat{M}}$ for all $k\in \{1,\dots,d-1\}$.

\subsection{Coverings associated with $d$-differentials}\label{sec:d:diff:n:cover}
Let $(X_0,q_0)$  be an element of  $\strate$, and denote by $Z(q_0)=\{x^0_1,\dots,x^0_n\}$ the set of zeros and poles of $q_0$, where the order of $x^0_i$ is $k_i$.
Let $\veps_0: \pi_1(X_0\smin Z(q_0),*) \ra \Z/d\Z$ be the group morphism given by the linear holomomies of the flat metric defined by $q_0$.

\begin{Lemma}\label{lm:prim:d:diff}
 The morphism $\veps_0$ is surjective if and only if $q_0$ is not a power of a $d'$-differential on $X_0$ with $d' <d$.
\end{Lemma}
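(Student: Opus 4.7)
\smallskip
\noindent\textbf{Proof plan.} The image of $\veps_0$ is a subgroup of $\Z/d\Z$, hence cyclic of the form $(d/d')\Z/d\Z$ for a unique divisor $d'\mid d$; under the identification with roots of unity, this is precisely the group of $d'$-th roots of unity. Thus $\veps_0$ fails to be surjective exactly when $d'<d$, and the whole argument will consist in showing that this $d'$ is the optimal integer for which $q_0$ is a $(d/d')$-th power of a $d'$-differential.

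For the ``only if'' direction, suppose $q_0=\tilde q^{d/d'}$ for a meromorphic $d'$-differential $\tilde q$ on $X_0$ with $d'<d$. I would compare the two flat atlases: any local coordinate $z$ in which $\tilde q=(dz)^{d'}$ is automatically a local coordinate in which $q_0=(dz)^d$, so the flat atlas associated to $q_0$ contains the one associated to $\tilde q$. Consequently, every holonomy of $q_0$ already appears as a holonomy of $\tilde q$, hence lies in the $d'$-th roots of unity. This forces $\mathrm{Im}(\veps_0)\subset(d/d')\Z/d\Z\subsetneq\Z/d\Z$.

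For the ``if'' direction, let $d'<d$ be the order of $\mathrm{Im}(\veps_0)$. In the flat atlas of $q_0$, transition maps have the form $w=\xi z+c$ with $\xi$ a $d'$-th root of unity, so $(dw)^{d'}=(dz)^{d'}$; therefore the local expressions $(dz)^{d'}$ glue into a holomorphic $d'$-differential $\tilde q$ on $X_0':=X_0\smin Z(q_0)$, which evidently satisfies $\tilde q^{d/d'}=q_0$ on $X_0'$. It remains to check that $\tilde q$ extends meromorphically across each point $x_i^0$ of order $k_i$. Choosing a local holomorphic coordinate $u$ near $x_i^0$, the flat coordinate takes the form $z=c_iu^{(k_i+d)/d}$, so
\[
\tilde q=(dz)^{d'}=C_i\,u^{k_id'/d}\,du^{d'}
\]
up to constants. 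Now the class of $\veps_0(\del_i)=e^{2\pi\imath k_i/d}$ lies in $\mathrm{Im}(\veps_0)$, which consists exactly of the $d'$-th roots of unity, so $k_id'/d\in\Z$. Since $k_i>-d$, we have $k_id'/d>-d'$, and the expression above defines a meromorphic $d'$-differential in a neighborhood of $x_i^0$ whose order at $x_i^0$ is $>-d'$. Gluing over all $i$ produces the required $\tilde q$ on $X_0$, and the identity $\tilde q^{d/d'}=q_0$ extends by continuity.

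The only subtle step is the extension across $Z(q_0)$: one has to use precisely the arithmetic fact that the holonomy $e^{2\pi\imath k_i/d}$ around $\del_i$ is a $d'$-th root of unity (a direct consequence of the definition of $\veps_0$ and of $d'$) to deduce that the exponent $k_id'/d$ is an integer, which is what turns the a priori multi-valued expression $(dz)^{d'}$ into a genuine meromorphic $d'$-differential at $x_i^0$.
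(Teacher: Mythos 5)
Your proof is correct and follows essentially the same route as the paper: identify $\mathrm{Im}(\veps_0)$ with the group of $d'$-th roots of unity and glue the local $d'$-th roots $(dz)^{d'}$ of $q_0$ into a $d'$-differential. The only difference is that you also verify the meromorphic extension of $\tilde q$ across $Z(q_0)$ (via the integrality of $k_id'/d$ and the bound $k_id'/d>-d'$), a point the paper's proof leaves implicit.
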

\begin{proof}
 If $q_0=q_1^k$, where $q_1$ is a meromorphic $d/k$-differential on $X$, then $\veps_0$ would take values in $\Z/(d/k)\Z=\{e^{\frac{2\pi\imath k}{d}j}, \; j=0,1,\dots,d/k-1\}$.  Conversely, if $\veps_0$ is not surjective, then it would take values in a proper subgroup of $\Z/d\Z$. Hence $\mathrm{Im}(\veps_0)=\Z/d'\Z$, for some $d' < d$ that divides $d$.  It follows that the $d'$-differentials $(dz)^{d'}$ in the local charts associated with the flat metric defined by $q_0$ match together to give  a well defined $d'$-differential $q_1$ on $X$. By construction, we have $q_0=q_1^{d/d'}$ (since both $q_0$ and $q_1^{d/d'}$ are given by $(dz)^d$ on the local charts of the flat metric).
\end{proof}

\begin{Remark}\label{rk:prim:comp}
Since $\veps_0$ takes values in  a discrete set, if it is surjective for $(X_0,q_0)$, then it is surjective for all $(X,q)$ in the same connected component of $\strate$.
\end{Remark}

\begin{Definition}\label{def:prim:d:diff}
 A $d$-differential is said to be {\em primitive} if it is \ul{not} a power of some $d'$-differential, with $d' < d$, on the same Riemann surface. A component of $\strate$ is said to be {\em primitive} if it contains  a primitive $d$-differential (hence all of its elements are primitive).
\end{Definition}
From now on we will suppose that $(X_0,q_0)$ is primitive, and to simplify the notation, we will write $Z_0$ instead of $Z(q_0)$.

\medskip

Fix a homeomorphism $f_0:M \ra X_0$ such that $f_0(s_i)=x^0_i$. In particular, we have $f_0(M')=X_0\setminus Z_0$.
Let $\Gamma=\ker(\veps_0\circ f_{0*})\subset \pi_1(M',*)$, and $\pi: \hat{M}' \ra M'$ be the covering associated with $\Gamma$.
Let $\hat{M}$ and $\hat{\Sig}$  be as in Section~\ref{sec:cyclic:cov}. Fix a generator $\zeta$ of $\Z/d\Z$, and let $T_\zeta: \hat{M} \ra \hat{M}$ be the homeomorphism of $\hat{M}$ associated with $\zeta$.
Recall that we have $T_\zeta(\hat{\Sig})=\hat{\Sig}$ and $T^d_\zeta=\id_{\hat{M}}$.

The following lemma is classical (see \cite{EV92,BCGGM2} for a proof by complex algebraic geometry arguments).
\begin{Lemma}\label{lm:ab:diff:eig:vect}
Given $(X_0,q_0)\in \strate$, there is a triple $(\hX_0,\homg_0,\tau_0)$, where $\hX_0$ is a compact Riemann surface, $\homg_0$ a holomorphic $1$-form on $\hX_0$, and $\tau_0$ an automorphism of order $d$ of $\hX_0$ such that
\begin{itemize}
\item[$\bullet$] $X_0 \simeq \hX_0/\langle \tau_0\rangle$,

\item[$\bullet$] the pullback of $q_0$ to $\hX_0$ is equal to $\homg_0^d$,

\item[$\bullet$] $\tau_0^*\homg_0=\zeta\homg_0$.
\end{itemize}
Moreover, there is a homeomorphic map $\hat{f}_0: \hat{M} \ra \hat{X}_0$ such that
\begin{itemize}
\item[(i)] $f_0\circ\pi=\varpi_0\circ \hat{f}_0$, where $\varpi_0: \hX_0\to X_0$ is the canonical projection, and

\item[(ii)] $T=\hat{f}^{-1}_0\circ\tau_0\circ\hat{f}_0$.
\end{itemize}
The triple $(\hX_0,\homg_0,\tau_0)$ will be called the {\em canonical cyclic cover} of $(X_0,q_0)$.
\end{Lemma}
\begin{proof}[Sketch of proof]
Let $X'_0$ denote the punctured surface $X_0\smin Z_0$.  Let $\tilde{\varpi}'_0: \Delta \ra X'_0$ be the universal covering map, where $\Delta=\{z \in  \C, \; |z| <1\}$.
The $d$-differential $(\tilde{\varpi}'_0)^*q_0$ admits a well defined $d$-th root $\xi_0(z)dz$ on $\Delta$ which is nowhere vanishing.
The $1$-form $\xi_0(z)dz$ descends to a well defined holomorphic $1$-form $\hat{\omega}'_0$ on  the quotient $\hat{X}'_0:= \Delta/\ker(\veps_0)$.

By construction, we have a covering map $\varpi': \hat{X}'_0 \ra X'_0$ of degree $d$.
Therefore $\hat{X}'_0$ is a Riemann surface with punctures, that is there is a compact Riemann surface $\hat{X}_0$ and a finite subset $\hat{Z}_0$ of $\hat{X}_0$ such that $\hat{X}'=\hat{X}_0\smin\hat{Z}_0$.
The covering map $\varpi'$ extends to a branched covering $\varpi: \hat{X}_0 \ra X_0$ with branched points in $\hat{Z}_0$.

Since the poles of $q_0$ have order at most $d-1$, $|(\varpi')^*q_0|$ is bounded in a neighborhood of any $\hat{x} \in \hat{Z}_0$.
Since $(\hat{\omega}'_0)^d=\varpi^*q_0$ on $\hat{X}'_0$, it follows that $|\hat{\omega}'_0|$ is bounded in a neighborhood of any puncture of $\hat{X}'$.
Thus $\hat{\omega}'_0$ extends to a holomorphic $1$-form $\hat{\omega}_0$ on $\hat{X}_0$ which does not vanish on $\hat{X}'_0$.
By construction, we have $\hat{\omega}_0^d=\varpi^*q_0$ on $\hat{X}_0$.

Let $\alpha$ be an element of $\pi_1(X'_0,*)$ such that $\veps_0(\alpha)=\zeta$. The action of $\alpha$ on $\Delta$ induces an automorphism $\tau'_0$ on $\hat{X}'_0$ which satisfies  $(\tau'_0)^*\hat{\omega}'_0=\zeta\hat{\omega}'_0$. One can readily check that $\tau_0$ extends to an automorphism $\tau_0$ of $\hat{X}_0$ of order $d$.

There is a homeomorphism $\hat{f}'_0: \hat{M}'\ra \hat{X}'_0$ such that $\varpi'\circ\hat{f}'_0=f_0\circ\pi$.
This homeomorphism extends uniquely to a homeomorphism $\hat{f}^0: \hat{M} \to \hat{X}_0$ such that $\hat{f}_0(\hat{\Sig})=\hat{Z}_0$.
It is  straightforward to check that $(\hat{X}_0, \hat{\omega}_0,\tau_0)$, and $\hat{f}_0$ satisfy all the required properties.
\end{proof}

\begin{Remark}\hfill
\begin{itemize}
  \item[(a)] The form $\hat{\omega}_0$ in Lemma~\ref{lm:ab:diff:eig:vect} is obviously not unique if $d>1$, since multiplying by a $d$-th root of unity provides us with another holomorphic $1$-form with the same properties.

  \item[(b)] Some of the points in $\hZ_0$ may not be zero of $\hat{\omega}_0$, those points  are just marked points on $\hat{X}_0$. However,  to lighten the discussion we will call $\hZ_0$ the zero set of $\hat{\omega}_0$.

  \item[(c)] The map $\hat{f}_0: \hM_0 \to \hX_0$  induces an isomorphism $\hat{f}_{0,\zeta}: H^1(\hM,\hSig,\C)_\zeta \to H^1(\hX_0,\hZ_0,\C)_\zeta$, where $H^1(\hM,\hSig,\C)_\zeta$ and  $H^1(\hX_0,\hZ_0,\C)_\zeta$ are the eigenspaces of the eigenvalue $\zeta$ of the actions of $T$ and $\tau_0$ on $H^1(\hM,\hSig,\C)$ and $H^1(\hX_0,\hZ_0,\C)$ respectively.
Note that $\hat{f}_0$  (more precisely, the homotopy class of $\hat{f}_0$) is not unique, because post-composing $\hat{f}_0$ by any element of the group $\langle\tau_0\rangle$ provides another homeomorphism with the same properties. The induced action of this operation on $\hat{f}_{0,\zeta}$ consists of multiplying $\hat{f}_{0,\zeta}$ by a $d$-th root of unity.
\end{itemize}
\end{Remark}

\subsection{Projection to moduli space and topology of $\strate$}\label{sec:proj:mod:sp}
Given $(X,q) \in \strate$, let $x_1,\dots,x_n$ denote the zeros of $q$ such that the order of $x_i$ is $k_i$ (a zero of negative order is a pole). Let $Z(q)$ denote the set $\{x_1,\dots,x_n\}$.
We have a natural forgetful map  from $F: \strate \ra \Mcal_{g,n}$ which associates to $(X,q)$ the Riemann surface with marked points $(X,\{x_1,\dots,x_n\})$. Let us denote by $\pstrate$ the image of $\strate$ in $\Mcal_{g,n}$ under this map.
A point $(X,\{x_1,\dots,x_n\})\in \pstrate$ is characterized by the following property: the divisor $\sum_{i=1}^n k_ix_i$ on $X$ is equivalent to $K_X^{\otimes d}$.
In particular, $\pstrate$ is a subvariety of $\Mcal_{g,n}$.
Since there is at most one meromorphic $d$-differential on $X$, up to  multiplication by a scalar, whose divisor is $\sum_{i=1}^n k_ix_i$, we see that $\pstrate$  can be identified with $\Pb\strate$.

\medskip

Let $\Ccal_{g,n}$ be the universal curve over $\Mcal_{g,n}$. Let $K_{\Ccal_{g,n}/\Mcal_{g,n}}$ denote the relative canonical line bundle of the projection  $p: \Ccal_{g,n} \ra \Mcal_{g,n}$. There exist $n$ sections $\sigma_i: \Mcal_{g,n} \ra \Ccal_{g,n}, \; i=1,\dots,n$, of $p$ such that if $m\in \Mcal_{g,n}$ represents the pointed curve $(X,\{x_1,\dots,x_n\})$, then $\sigma_i(m)$ is the point in $p^{-1}(m)\subset \Ccal$ which corresponds to $x_i$ under the identification  $p^{-1}(m) \simeq X$.
Let $D_i$ denote the image of $\Mcal_{g,n}$ under $\sigma_i$, then $D_i$ is a divisor of $\Ccal_{g,n}$. Let $\Lcal_i$ denote the line bundle associated with $D_i$. Consider the line bundle
$$
{\mathcal K}:=K^{\otimes d}_{\Ccal_{g,n}/\Mcal_{g,n}}\otimes\Lcal^*_1{}^{\otimes k_1}\otimes\dots\otimes\Lcal_n^*{}^{\otimes k_n}
$$
on $\Ccal_{g,n}$. By definition, if $m=(X,\{x_1,\dots,x_n\}) \in \pstrate$, then the restriction of ${\mathcal K}$ to the fiber over $m$ is trivial. Thus $p_*(\mathcal{K}_{|p^{-1}(\pstrate)})$ is a line bundle over $\pstrate$, whose fiber over $m$ is generated by any $d$-differential $q$ on $X$ such that $\mathrm{div}(q)=\sum_{i=1}^nk_ix_i$.
Note that $\strate$ is the complement of the zero section in the total space of this line bundle.
Thus this description provides us with a natural topology and a complex structure on $\strate$.

\subsection{Lifting to Abelian differentials}\label{sec:lift:ab:diff}
Let $\hat{g}$ be the genus of $\hat{M}$, and $\hat{n}=\card(\hat{\Sig})$. Denote by $\{\hat{s}_1,\dots,\hat{s}_{\hat{n}}\}$ the points in $\hat{\Sig}$.
Let $\lstrate$ denote the moduli space of triples $(\hat{X},\hat{\omega},\tau)$, where $\hat{X}$ is a Riemann surface of genus $\hat{g}$, $\hat{\omega}$ is a holomorphic $1$-form on $\hat{X}$, and $\tau: \hat{X} \ra \hat{X}$ is an automorphism of order $d$ of $\hat{X}$ such that
\begin{itemize}
\item[(i)] $\tau^*\hat{\omega}=\zeta\hat{\omega}$,

\item[(ii)] $X:=\hat{X}/\langle \tau \rangle$ is a Riemann surface of genus $g$.

\item[(iii)] there is a meromorphic $d$-differential $q$ on $X$ such that $(X,q) \in \strate$ and $\varpi^*q=\hat{\omega}^d$, where $\varpi: \hat{X} \ra X$ is a the canonical projection.
\end{itemize}
Let $\hat{f}: \hat{M} \to \hat{X}$ be a homeomorphism as in Lemma~\ref{lm:ab:diff:eig:vect}.
Define $\hat{Z}(\hat{\omega})=\hat{f}(\hat{\Sig})$.
By construction, the  automorphism $\tau$ acts freely on $\hat{X}\setminus\hat{Z}(\hat{\omega})$, and $\varpi(\hat{Z}(\hat{\omega}))$ is the set of zeros and poles of $q$.

Let $\hat{x}_j=\hat{f}(\hat{s}_j), \; j=1,\dots,\hat{n}$.
The order  $\hat{k}_j$ of $\hat{\omega}_0$ at $\hat{x}_j$ can be computed from  the order $k_i$ of $q_0$ at $\varpi(\hat{x}_j)$ as follows
\begin{equation}\label{eq:order:1:form:cov}
\hat{k}_j=d_i(1+\frac{k_i}{d})-1
\end{equation}
where $d_i$ is the order of $k_i$ in $\Z/d\Z$. Note that $\hat{k}_j=0$ if and only if there exist positive integers $n_i,d_i$ such that $d=n_id_i$ and $k_i=n_i(1-d_i)$.
By a slight abuse, we will call $\hat{Z}(\hat{\omega})$ the zero set of $\hat{\omega}$.

\medskip

If  $\zeta'$ is any $d$-th root of unity, then the triples $(\hat{X},\zeta'\hat{\omega},\tau)$ and $(\hat{X},\hat{\omega},\tau)$ represent the same element of $\lstrate$.
That is because there is $k\in \{0,\dots,d-1\}$ such that $\zeta'=\zeta^k$, and $\tau^k: \hat{X} \to \hat{X}$ is an isomorphism which satisfies $\tau^{k*}\hat{\omega}=\zeta'\hat{\omega}$, and $\tau^{-k}\circ\tau\circ\tau^k=\tau$.
Therefore, as a direct consequence of Lemma~\ref{lm:ab:diff:eig:vect}, we get

\begin{Corollary}\label{cor:P:bijection}
The map
$$
\begin{array}{cccc}
 \PP: & \lstrate & \ra & \strate \\
      & (\hat{X},\hat{\omega},\tau) & \mapsto & (X,q)
\end{array}
$$
is a bijection.
\end{Corollary}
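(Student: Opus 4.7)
The plan is to prove surjectivity and injectivity of $\PP$ separately. Surjectivity is essentially the content of Lemma~\ref{lm:ab:diff:eig:vect}: given $(X,q)\in\strate$, that lemma produces a Riemann surface $\hat{X}_0$, a holomorphic $1$-form $\hat{\omega}_0$, a branched covering $\varpi_0$, and an order-$d$ automorphism $\tau_0$ satisfying conditions (i)--(iii) in the definition of $\lstrate$, so that $\PP(\hat{X}_0,\hat{\omega}_0,\tau_0)=(X,q)$.

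For injectivity, suppose $(\hat{X}_1,\hat{\omega}_1,\tau_1)$ and $(\hat{X}_2,\hat{\omega}_2,\tau_2)$ both map to $(X,q)$, with associated quotient maps $\varpi_i: \hat{X}_i \ra X$. The first step is to identify both $\varpi_i$ with the same canonical cyclic covering. Restricted to $X\setminus Z(q)$, each $\varpi_i$ is an \'etale Galois cover of degree $d$. The combined conditions $\hat{\omega}_i^d=\varpi_i^*q$ and $\tau_i^*\hat{\omega}_i=\zeta\hat{\omega}_i$ identify the generator $\tau_i$ of the deck group with the linear holonomy class $\zeta\in\Z/d\Z$ of the flat metric defined by $q$; consequently, the monodromy subgroup associated with $\varpi_i$ is exactly $\ker\veps_0\subset\pi_1(X\setminus Z(q),*)$, where $\veps_0$ is the holonomy morphism introduced in Section~\ref{sec:d:diff:n:cover}. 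By the classification of coverings, extended across the branch locus as in the proof of Lemma~\ref{lm:ab:diff:eig:vect}, there is a biholomorphism $\phi: \hat{X}_1 \ra \hat{X}_2$ with $\varpi_2\circ\phi=\varpi_1$.

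The second step is to adjust $\phi$ so that it identifies the remaining structure. Since $\phi^*\hat{\omega}_2$ and $\hat{\omega}_1$ are both holomorphic $1$-forms on $\hat{X}_1$ whose $d$-th power equals $\varpi_1^*q$, they differ by a $d$-th root of unity: $\phi^*\hat{\omega}_2=\zeta^k\hat{\omega}_1$ for some $k$. Replacing $\phi$ by $\phi\circ\tau_1^{-k}$---which is still a biholomorphism over $X$---we may arrange $\phi^*\hat{\omega}_2=\hat{\omega}_1$. Then $\phi\circ\tau_1\circ\phi^{-1}$ is a deck transformation of $\varpi_2$ that pulls $\hat{\omega}_2$ back to $\zeta\hat{\omega}_2$; since only $\tau_2$ among the $d$ deck transformations of $\varpi_2$ has this property, we conclude $\phi\circ\tau_1\circ\phi^{-1}=\tau_2$, so the two triples define the same point of $\lstrate$.

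The main obstacle is the first step: canonically identifying each $\varpi_i$ with the holonomy cover of $(X,q)$. The key is that the $\zeta$-equivariance of $\hat{\omega}_i$ under $\tau_i$ rigidifies not just the underlying topological cover but also a preferred generator of its deck group, ruling out the ambiguity by an automorphism of $\Z/d\Z$ that one would otherwise have. Once this is settled, the remaining injectivity argument is the routine manipulation with deck transformations and $d$-th roots of unity sketched in the paragraph preceding the corollary.
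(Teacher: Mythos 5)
Your proof is correct and follows the same route the paper takes: surjectivity is Lemma~\ref{lm:ab:diff:eig:vect}, and injectivity comes from the uniqueness of the holonomy cover together with the observation (made in the paragraph preceding the corollary) that the $d$-th-root-of-unity ambiguity in $\hat{\omega}$ is absorbed by composing with a power of $\tau$. The paper states the corollary as a "direct consequence" and leaves the injectivity details implicit; your write-up simply supplies them.
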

By Corollary~\ref{cor:P:bijection}, we can endow $\lstrate$ with the topology of $\strate$.

Set $\hat{\kappa}=(\hat{k}_1,\dots,\hat{k}_{\hat{n}})$.
Let $\lstrateabdiff$ denote the stratum  of holomorphic $1$-forms $(\hat{X},\hat{\omega})$, where $\hat{X}$ is a Riemann surface of genus $\hat{g}$, and $\hat{\omega}$ has $\hat{n}$ zeros with orders given by $\hat{\kappa}$.  
There is a forgetful map $\ff: \lstrate \ra \lstrateabdiff, (\hat{X},\hat{\omega},\tau) \mapsto (\hat{X},\hat{\omega})$ which is finite to one (given a pair $(\hat{X},\hat{\omega})$ there may be more than one automorphism $\tau$ such that $(\hat{X},\hat{\omega},\tau) \in \lstrate$).
It is a well known fact that there exist some finite branched coverings $\Omega\widetilde{\Mcal}^{\langle \zeta \rangle}_{g,n}(\kappa)$ and $\Omega\widetilde{\Mcal}_{\hat{g}}(\hat{\kappa})$ of $\lstrate$ and of $\lstrateabdiff$ respectively such that $\ff$ lifts to an embedding $\tilde{\ff}: \Omega\widetilde{\Mcal}^{\langle \zeta \rangle}_{g,n}(\kappa) \ra \Omega\widetilde{\Mcal}_{\hat{g},\hat{n}}(\hat{\kappa})$.
This means that locally, up to taking some finite order coverings, we can identify a  neighborhood of an element $(\hat{X},\hat{\omega},\tau)$ in $\lstrate$ with a subset of a neighborhood of $(\hat{X},\hat{\omega})$ in $\lstrateabdiff$. As a consequence, we get

\begin{Proposition}\label{prop:d-th:root:hol}
Let $(X_0,q_0)$ and $(\hat{X}_0,\hat{\omega}_0,\tau_0)$ be as in Lemma~\ref{lm:ab:diff:eig:vect}.
Then there is a neighborhood $\Vcal$ of $(X_0,q_0)$ in $\strate$ and a map
 $$
 \begin{array}{cccc}
  \RR: & \Vcal & \ra & \lstrateabdiff\\
       & (X,q) & \mapsto & (\hat{X},\hat{\omega})
 \end{array}
 $$
which is biholomorphic onto its image such that $\RR((X_0,q_0))= (\hat{X}_0,\hat{\omega}_0)$, and if $(\hX,\homg)=\RR((X,q))$, then there is an automorphism $\tau$ of $\hX$ such that $(\hX,\homg,\tau)$ is the canonical cyclic cover of $(X,q)$.
\end{Proposition}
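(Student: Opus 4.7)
The plan is to construct $\RR$ as the composition $\ff\circ\PP^{-1}$, where $\PP: \lstrate \to \strate$ is the bijection from Corollary~\ref{cor:P:bijection} and $\ff: \lstrate \to \lstrateabdiff$ is the forgetful map $(\hat X,\hat\omega,\tau) \mapsto (\hat X,\hat\omega)$. With this definition, $\RR((X,q))$ is by construction the canonical cyclic cover of $(X,q)$ and $\RR((X_0,q_0))=(\hat X_0,\hat\omega_0)$, so what remains is to exhibit a neighborhood $\Vcal$ of $(X_0,q_0)$ on which $\RR$ is biholomorphic onto its image.

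The first step is holomorphicity. Corollary~\ref{cor:P:bijection} equips $\lstrate$ with the topology of $\strate$, making $\PP$ a homeomorphism; the forgetful map $\ff$ is tautologically continuous. The substantive content is that the construction in Lemma~\ref{lm:ab:diff:eig:vect} depends holomorphically on $(X,q)$: for $(X,q)$ near $(X_0,q_0)$ the topological pair $(M,\Sig)$ and the holonomy character $\veps$ are locally constant, so the branched covering $\varpi: \hat X \to X$ can be realized as the base change of a fixed topological branched cover of $M$. The complex structure on $\hat X$ is pulled back from $X$, and $\hat\omega$ is obtained as the $d$-th root of $\varpi^*q$ on the universal cover of $X\smin Z(q)$; both constructions are holomorphic in $(X,q)$, so $\RR$ is holomorphic from $\Vcal$ to $\lstrateabdiff$.

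For injectivity and holomorphic invertibility onto the image, I would invoke the lift $\tilde\ff: \Omega\widetilde{\Mcal}^{\langle\zeta\rangle}_{g,n}(\kappa) \to \Omega\widetilde{\Mcal}_{\hat g,\hat n}(\hat\kappa)$ recalled just before the statement, which is a holomorphic embedding between finite branched covers of $\lstrate$ and $\lstrateabdiff$. If $(X,q),(X',q') \in \Vcal$ satisfy $\RR(X,q)=\RR(X',q')=(\hat X,\hat\omega)$, then $\PP^{-1}(X,q)$ and $\PP^{-1}(X',q')$ are triples $(\hat X,\hat\omega,\tau)$ and $(\hat X,\hat\omega,\tau')$ sharing the first two entries. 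The set of order-$d$ automorphisms $\tau$ of $\hat X$ satisfying $\tau^*\hat\omega=\zeta\hat\omega$ is finite, so after shrinking $\Vcal$, continuity forces $\tau=\tau'$, and hence $(X,q)=(X',q')$ by Corollary~\ref{cor:P:bijection}. The biholomorphicity onto the image then follows by descending the corresponding statement for the embedding $\tilde\ff$ from the finite covers to the quotients.

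The main obstacle is the careful handling of the orbifold structure: the finite covers $\Omega\widetilde{\Mcal}^{\langle\zeta\rangle}_{g,n}(\kappa)$ and $\Omega\widetilde{\Mcal}_{\hat g,\hat n}(\hat\kappa)$ are introduced precisely to trivialize the automorphism groups of the underlying surfaces, but descending back to $\strate$ and $\lstrateabdiff$ requires checking that one can choose local uniformizers compatibly and that no extra automorphisms of $(\hat X,\hat\omega)$ preserving the eigenspace of $\zeta$ appear over a sufficiently small $\Vcal$. Once this is settled, the biholomorphism $\RR$ drops from $\tilde\ff$ without further work.
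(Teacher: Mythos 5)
Your proposal is correct and follows essentially the same route as the paper: the paper also obtains $\RR$ as $\ff\circ\PP^{-1}$ and derives the biholomorphicity onto the image from the fact that $\ff$ lifts to an embedding $\tilde{\ff}$ between finite branched covers of $\lstrate$ and $\lstrateabdiff$, stating the proposition as an immediate consequence of that discussion. The extra details you supply (holomorphic dependence of the cyclic cover on $(X,q)$, and local injectivity via the finiteness of the set of admissible automorphisms $\tau$) are exactly the points the paper leaves implicit.
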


\section{Local coordinates by period mappings}\label{sec:loc:coord:per}
In this section,  we introduce the period mappings on $\lstrate$ and show that they form an atlas which defines a structure of affine orbifold on $\strate$. The main results of this section (Proposition~\ref{prop:eigen:sp:loc:coord}, Corollary~\ref{cor:dim:Vzeta}, Corollary~\ref{cor:aff:mfd:structure}) have been shown in \cite{Veech:FS} and \cite{BCGGM2} (see also \cite{Sch16}).
The proofs we present here are different from the ones in the work mentioned above. In particular, we will make use of triangulations of $\hat{M}$ that are invariant under $T_\zeta$.

\subsection{Admissible triangulations}\label{sec:loc:chart:tria}
Let $\Tcal$ be a topological triangulation of $M$ with vertex set $\Sig$.
Let $\hat{\Tcal}$ denote the triangulation of $\hat{M}$ that is the pullback of $\Tcal$.
We fix an orientation for every edge of $\hat{\Tcal}$.

Let $N_1$ and $N_2$ be the numbers of edges and triangles of $\hat{\Tcal}$ respectively. We identify a vector $v\in \C^{N_1}$ with a function $v: \hat{\Tcal}^{(1)} \ra \C$.
We have an action of $T_\zeta$ by permutations on the sets $\hat{\Tcal}^{(1)}$ and $\hat{\Tcal}^{(2)}$ (since $\Tcal=\hat{\Tcal}/\langle T_\zeta \rangle$).
Note that the action of $T_\zeta$ on $\hat{\Tcal}^{(1)}$ is free if $d>1$.
Consider the system $(\Scal)$ of $N_1+N_2$ linear equations which are defined as follows
\begin{itemize}
\item each triangle $\theta \in \hat{\Tcal}^{(2)}$, whose sides are denoted by $e_1,e_2,e_3$, corresponds to an equation of the form
\begin{equation}\label{eq:lin:equa:tria}
  \pm v(e_1) \pm v(e_2) \pm v(e_3)=0
\end{equation}
where the signs $\pm$ are determined according to the orientation of $e_1,e_2,e_3$.

\item each edge $e\in \hat{\Tcal}^{(1)}$ corresponds to an equation of the form
\begin{equation}\label{eq:lin:equa:sym}
  v(T_\zeta(e))-\zeta v(e)=0.
\end{equation}
\end{itemize}
Let $V\subset \C^{N_1}$ denote the space of solutions of $(\Scal)$.
We will also consider the system $(\Scal_1)$ (resp. $(\Scal_2)$)  of all linear equations of type  \eqref{eq:lin:equa:tria} (resp. of type \eqref{eq:lin:equa:sym}). Let $V_1$ and $V_2$ denote the space of solutions of $(\Scal_1)$ and $(\Scal_2)$ respectively. By definition, $V=V_1\cap V_2$.

\begin{Lemma}\label{lm:sys:S:eigen:sp}
Let $V_\zeta=\ker(T_\zeta-\zeta\Id) \subset H^1(\hat{M},\hat{\Sig},\C)$.
Then $V_\zeta$ is isomorphic to the subspace $V \subset \C^{N_1}$.
\end{Lemma}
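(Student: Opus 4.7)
The plan is to identify $\C^{N_1}$ with the simplicial $1$-cochain group $C^1(\hat{\Tcal},\C)$ via the fixed orientations on the edges, and then to recognize the two subsystems $(\Scal_1)$ and $(\Scal_2)$ as intrinsic cohomological conditions on cochains.

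First I would handle $V_1$. The triangle equations \eqref{eq:lin:equa:tria}, read with the sign convention dictated by the edge orientations along $\partial\theta$, are by construction the simplicial cocycle equations $(\delta v)(\theta)=0$ for each $\theta\in\hat{\Tcal}^{(2)}$. Hence $V_1 = Z^1(\hat{\Tcal},\C)$. Next, because $\hat{\Tcal}$ is the pullback of $\Tcal$ and the vertex set of $\Tcal$ is $\Sig$, every vertex of $\hat{\Tcal}$ lies in $\hat{\Sig}$. Consequently the relative simplicial cochain complex $C^\bullet(\hat{M},\hat{\Sig};\C)$ satisfies $C^0(\hat{M},\hat{\Sig};\C)=0$, so there are no coboundaries in degree one and the tautological map $Z^1(\hat{\Tcal},\C)\to H^1(\hat{M},\hat{\Sig};\C)$ is an isomorphism. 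This gives a canonical identification $V_1 \simeq H^1(\hat{M},\hat{\Sig};\C)$.

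Next I would interpret $V_2$. The homeomorphism $T_\zeta$ preserves $\hat{\Tcal}$ and $\hat{\Sig}$, and therefore acts on $C^1(\hat{\Tcal},\C)$ by pullback. When $d>1$ its action on $\hat{\Tcal}^{(1)}$ is free, so after choosing one orientation per orbit and propagating it by $T_\zeta$ we may arrange that $(T_\zeta^* v)(e)=v(T_\zeta(e))$ with no sign correction; when $d=1$ both the equations \eqref{eq:lin:equa:sym} and the eigenspace condition are vacuous. With this bookkeeping, the equations \eqref{eq:lin:equa:sym} say exactly $T_\zeta^* v = \zeta v$, so $V_2$ is the $\zeta$-eigenspace of $T_\zeta^*$ acting on $C^1(\hat{\Tcal},\C)$. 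By naturality of simplicial cohomology the isomorphism $V_1 \simeq H^1(\hat{M},\hat{\Sig};\C)$ intertwines the cochain-level action of $T_\zeta^*$ with the induced action on cohomology. Therefore the intersection $V = V_1\cap V_2$ corresponds precisely to $\ker(T_\zeta^*-\zeta\,\Id)\subset H^1(\hat{M},\hat{\Sig};\C)$, which is $V_\zeta$.

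The only delicate point is the sign bookkeeping needed so that $(\Scal_2)$, written without any signs, really coincides with the cochain-level formula for $T_\zeta^*$; this is precisely what the equivariant choice of orientations in the previous paragraph achieves, and beyond that the argument is routine simplicial cohomology.
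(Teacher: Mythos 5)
Your proposal is correct and follows essentially the same route as the paper: the paper's proof simply observes that $(\Scal_1)$ cuts out $H^1_{\hat{\Tcal}}(\hat{M},\hat{\Sig},\C)\simeq H^1(\hat{M},\hat{\Sig},\C)$ (implicitly using, as you make explicit, that all vertices lie in $\hat{\Sig}$ so cocycles equal relative cohomology classes) and that $(\Scal_2)$ cuts out the $\zeta$-eigenspace of $T_\zeta$. Your extra care with the orientation/sign bookkeeping for the $T_\zeta$-action on oriented edges is a detail the paper leaves implicit, but it does not change the argument.
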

\begin{proof}
 We first notice that $\hat{\Tcal}$ provides us with a cell complex structure on $\hat{M}$.
 Thus, the system $(\Scal_1)$ defines the space $H^1_{\hat{\Tcal}}(\hat{M},\hat{\Sig},\C) \simeq H^1(\hat{M},\hat{\Sig},\C)$.
Since the solutions of the system $(\Scal_2)$ correspond precisely to the vectors in $\C^{N_1}$ such that $T_\zeta(v)=\zeta v$, the lemma follows.
\end{proof}

\subsection{Period mappings}\label{sec:per:map:loc:chart}
Let $(X_0,q_0)$ be an element of $\strate$. Fix a homeomorphism $f_0: (M,\Sig) \to (X_0,Z(q_0))$.
Let $(\hat{X}_0,\hat{\omega}_0,\tau_0)$ and $\hat{f}_0: (\hat{M},\hat{\Sigma})\to (\hat{X}_0,\hat{Z}(\hat{\omega}_0))$ be as in  Lemma~\ref{lm:ab:diff:eig:vect}.
Given a $\C$-valued closed $1$-form  $\xi$ on $\hat{M}$, we will denote by $[\xi]$ its cohomology class in $H^1(\hat{M},\hat{\Sig},\C)$.

Assume that $(\hat{X}_0,\hat{\omega}_0)$ is not an orbifold point of $\lstrateabdiff$. There exists a neighborhood $\Wcal$ of $(\hat{X}_0,\hat{\omega}_0)$ in $\lstrateabdiff$ such that for any $(\hat{X},\hat{\omega}) \in \Wcal$, we have a canonical homeomorphism $\hat{f}: (\hat{M},\hat{\Sigma}) \to (\hat{X},\hat{Z}(\hat{\omega}))$ defined up to isotopy such that if $(\hat{X},\hat{\omega})=(\hat{X}_0,\hat{\omega}_0)$ then $\hat{f}=\hat{f}_0$.
Note that there always exists a diffeomorphism in the isotopy class of $\hat{f}$ (see \cite{FM12}), therefore we can assume that $\hat{f}$  is itself a diffeomorphism.
Consequently, we have a well defined map
$$
\begin{array}{cccc}
\Phi: & \Wcal & \ra & H^1(\hat{M},\hat{\Sig},\C),\\
     & (\hat{X},\hat{\omega}) & \mapsto & [\hat{f}^*\hat{\omega}]
\end{array}
$$
The map $\Phi$ is called a {\em period mapping}.
It is a well known fact that $\Phi$ is a (holomorphic) local chart of $\lstrateabdiff$ if $\Wcal$ is small enough (see for instance \cite{MT02, Z:survey}).

\begin{Proposition} \label{prop:eigen:sp:loc:coord}
Assume that $(X_0,q_0)$ is not an orbifold point of $\strate$, and $(\hat{X}_0,\hat{\omega}_0)$ is not an orbifold point of $\lstrateabdiff$.
Let $\RR: \Vcal \ra \lstrateabdiff$ be the map in Proposition~\ref{prop:d-th:root:hol}.
Then for $\Vcal$ small enough,  $\Xi:=\Phi\circ\RR: \Vcal \ra V_\zeta$ realizes a biholomorphic map from $\Vcal$ onto  an open subset of $V_\zeta$.
As a consequence, $\Xi$ is a holomorphic local chart for $\strate$.
\end{Proposition}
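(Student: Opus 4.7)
The plan is to decompose the statement into three verifications: (i) $\Xi$ is a holomorphic injection into $H^1(\hat{M}, \hat{\Sig}, \C)$; (ii) $\Xi(\Vcal) \subset V_\zeta$; and (iii) $\Xi(\Vcal)$ is open in $V_\zeta$.

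Part (i) is immediate: after shrinking $\Vcal$ so that $\RR(\Vcal) \subset \Wcal$, the composition $\Xi = \Phi \circ \RR$ is the composition of the biholomorphism $\RR$ onto its image (Proposition~\ref{prop:d-th:root:hol}) with the holomorphic local chart $\Phi$, hence is biholomorphic onto its image. For (ii), the key input is a compatible choice of marking: given $(X, q) \in \Vcal$ with $(\hat{X}, \hat{\omega}, \tau) = \RR(X, q)$, both cyclic coverings $\pi: \hat{M} \to M$ and $\varpi: \hat{X} \to X$ are associated with the same holonomy representation $\veps \circ f_*$, so the reference homeomorphism $\hat{f}: (\hat{M}, \hat{\Sig}) \to (\hat{X}, \hat{Z}(\hat{\omega}))$ for the period chart $\Phi$ can be chosen within its isotopy class to intertwine the deck transformations, $\hat{f} \circ T_\zeta = \tau \circ \hat{f}$. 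Pulling back $\tau^*\hat{\omega} = \zeta \hat{\omega}$ by $\hat{f}$ then yields $T_\zeta^* \hat{f}^* \hat{\omega} = \zeta \hat{f}^* \hat{\omega}$, and passing to cohomology gives $\Xi(X, q) = [\hat{f}^* \hat{\omega}] \in \ker(T_\zeta^* - \zeta \Id) = V_\zeta$.

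The core task is (iii). My plan is to identify $\RR(\Vcal)$ locally inside $\lstrateabdiff$ with the fixed locus of a twisted holomorphic transformation of order $d$ on the Teichm\"uller cover of $\lstrateabdiff$ combining the mapping class element $T_\zeta$ with multiplication by $\zeta$ on the $1$-form, schematically $(\hat{X}, \hat{\omega}, \hat{f}) \mapsto (\hat{X}, \zeta \hat{\omega}, \hat{f} \circ T_\zeta^{-1})$. The marked reference $(\hat{X}_0, \hat{\omega}_0, \hat{f}_0)$ is fixed by this transformation precisely because $\tau_0^* \hat{\omega}_0 = \zeta \hat{\omega}_0$. The period map intertwines it with a $\C$-linear automorphism of $H^1(\hat{M}, \hat{\Sig}, \C)$ whose fixed set is exactly $V_\zeta$. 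By Cartan's linearization of a finite-order holomorphic automorphism at a fixed point, the local fixed locus is a smooth complex submanifold and $d\Phi$ maps its tangent space isomorphically onto $V_\zeta$; hence $\Phi$ restricts to a local biholomorphism from this fixed locus (which coincides locally with $\RR(\Vcal)$) onto an open neighborhood of $\Xi((X_0, q_0))$ in $V_\zeta$.

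The main obstacle is justifying the fixed-locus identification in (iii): given a nearby $(\hat{X}, \hat{\omega}) \in \Wcal$ whose period class lies in $V_\zeta$, one must produce a genuine holomorphic order-$d$ automorphism $\tau$ of $\hat{X}$ satisfying $\tau^* \hat{\omega} = \zeta \hat{\omega}$, not merely the topological candidate $\hat{f} \circ T_\zeta \circ \hat{f}^{-1}$. This step relies on equivariant deformation theory around the smooth (non-orbifold) point $(\hat{X}_0, \hat{\omega}_0, \tau_0)$ together with the rigidity of a holomorphic $1$-form within its cohomology class, which upgrades the equivariant cohomological condition to the pointwise identity and simultaneously forces the topological map to preserve the complex structure of $\hat{X}$.
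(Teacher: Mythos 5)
Your proposal is correct, but it proves surjectivity onto an open subset of $V_\zeta$ by a genuinely different route from the paper. The paper's proof is constructive: it takes a geodesic triangulation $\Tb_0$ of $(X_0,q_0)$, pulls it back to a $T_\zeta$-invariant triangulation $\hat{\Tcal}$ of $\hat{M}$, and for every $v$ in a small neighborhood of $v_0$ in $V\simeq V_\zeta$ it \emph{builds} a translation surface $(\hat{X}_v,\hat{\omega}_v)$ by gluing the Euclidean triangles $\theta(v)$; the relations $v(T_\zeta(e))=\zeta v(e)$ force the combinatorial symmetry $T_\zeta$ to become an isometry $\tau_v$ with $\tau_v^*\hat{\omega}_v=\zeta\hat{\omega}_v$, so every nearby point of $V_\zeta$ is explicitly realized. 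You instead realize $\RR(\Vcal)$ as the fixed locus of the order-$d$ transformation $F:(\hat{X},\hat{\omega},[\hat{f}])\mapsto(\hat{X},\zeta\hat{\omega},[\hat{f}\circ T_\zeta^{-1}])$ on the marked stratum, which the period map conjugates to the linear map $\eta\mapsto\zeta(T_\zeta^*)^{-1}\eta$ with fixed set $V_\zeta$. This works, and in fact your ``main obstacle'' resolves more cleanly than you suggest: you do not need Cartan's linearization or any equivariant deformation theory, because $\Phi$ \emph{already} linearizes $F$ — on an $F$-invariant neighborhood in the Teichm\"uller cover, $\Phi\circ F=L\circ\Phi$ together with the injectivity of $\Phi$ shows that $[\hat{f}^*\hat{\omega}]\in V_\zeta=\mathrm{Fix}(L)$ forces $F$ to fix the marked surface, which is exactly the existence of the holomorphic automorphism $\tau$ in the mapping class $[\hat{f}\circ T_\zeta\circ\hat{f}^{-1}]$ with $\tau^*\hat{\omega}=\zeta\hat{\omega}$ (and $\tau^d=\id$ since $\tau^d$ is isotopic to the identity and preserves $\hat{\omega}$). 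What each approach buys: yours is shorter and conceptual but leans on the injectivity of period coordinates for Abelian differentials as a black box and requires some care with markings and $F$-invariant neighborhoods; the paper's triangle-gluing argument is elementary and self-contained, and the invariant-triangulation machinery it sets up is reused heavily later (Sections 6--7 and the Appendix), which is presumably why the author chose it.
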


\begin{proof}
Let $\Tb_0$ be a triangulation  of $(X_0,q_0)$ whose vertex set is $Z_0=Z(q_0)$, and all the edges are geodesics of the flat metric defined by $q_0$.
It is a classical result that such triangulations always exist (see for instance \cite{MS91}).
In what follows $\Tcal$ will be the triangulation of $M$ induced from $\Tb_0$ via $f_0$.

We identify $H^1(\hat{M},\hat{\Sig},\C)$ (resp.  $V_\zeta$) with $V_1$ (resp. $V$) via the map $\eta \mapsto \{\eta(e), \; e \in \hat{\Tcal}^{(1)}\}$.
Define the vector $v_0 \in \C^{N_1}$ by
$$
v_0(e)=\int_{\hat{f}_0(e)}\hat{\omega}_0.
$$
Recall that locally $\lstrate$ can be considered as subset of $\lstrateabdiff$.
Since we have $\Phi\circ\RR((X_0,q_0))=v_0\in V$, it is enough to show that $\Phi(\Wcal\cap\lstrate)$ is a neighborhood of $v_0$ in $V$.

For any triangle $\theta \in \hat{\Tcal}^{(2)}$ and any vector $v \in V$, let $\theta(v)$ denote the triangle in the plane which is formed by the vectors $v(e_1), v(e_2), v(e_3)$, where $e_1,e_2,e_3$ are the sides of $\theta$. We now consider an open neighborhood $U$ of $v_0$ in $V$ such that,
\begin{itemize}
 \item[(a)] for every $v\in U$ and every triangle $\theta \in \hat{\Tcal}^{(2)}$, there is an orientation preserving affine map of the plane that sends $\theta(v_0)$ to $\theta(v)$,

 \item[(b)] for all $k \in \{1,\dots,d-1\}$, $U\cap\zeta^k\cdot U=\vide$.
\end{itemize}
For any fixed $v\in U$, we can glue the triangles $\{\theta(v), \; \theta \in \hat{\Tcal}^{(2)}\}$ to get a flat surface with conical singularity. Note that the holonomies of this flat surface  are all translations, therefore the surface obtained is a translation surface which is defined by  a holomorphic $1$-form $(\hat{X}_v,\hat{\omega}_v)$.

For any $\theta\in \hat{\Tcal}^{(2)}$, there is a unique affine map $A_{\theta}(v_0,v): \R^2 \ra \R^2$  which maps the triangle $\theta(v_0)$ onto the triangle $\theta(v)$ sending each side of $\theta(v_0)$ to the side of $\theta(v)$ corresponding to the same edge of $\hat{\Tcal}$. The family of maps $\{A_{\theta}(v_0,v),\; \theta \in \Tcal^{(2)}\}$ defines a homeomorphism $h_v: \hat{X}_0 \ra \hat{X}_v$.

Let $\hat{f}_v:=h_v\circ\hat{f}_0: \hat{M} \ra \hat{X}_v$. By construction, every edge  $e \in \hat{\Tcal}^{(1)}$ is mapped to a geodesic arc of the flat metric defined by $\hat{\omega}_v$, and $v(e)=\int_{\hat{f}_v(e)}\hat{\omega}_v$.
Let $\tau_v=\hat{f}_v\circ T_\zeta\circ \hat{f}_v^{-1}: \hat{X}_v \ra \hat{X}_v$.

For any $\theta \in \hat{\Tcal}^{(2)}$, let $\theta'\in \hat{\Tcal}^{(2)}$ be the image of $\theta$ by $T_\zeta$.
Since we have $v(T_\zeta(e))=\zeta v(e)$ for all $e\in \hat{\Tcal}^{(1)}$ (because $v$ satisfies $(\Scal_2)$), it follows that   $\theta'(v)=\zeta\cdot \theta(v)$, where $\zeta\cdot$ means the rotation of $\R^2\simeq \C$ corresponding to the multiplication by $\zeta$. Consequently, the homeomorphism $\tau_v: \hat{X}_v \ra \hat{X}_v$ is actually an isometry of the flat metric defined by $\hat{\omega}_v$ on $\hat{X}_v$, it satisfies in particular $\tau^*_v\hat{\omega}_v=\zeta\hat{\omega}_v$. This implies that $(\hat{X}_v,\hat{\omega}_v,\tau_v)$ is an element of $\lstrate$.
Clearly, we have $\Phi((\hat{X}_v,\hat{\omega}_v,\tau_v)) =v$. Thus $\Phi(\Wcal\cap\lstrate)$ contains a neighborhood of $v_0$ in $V$ and the proposition follows.
\end{proof}

\begin{Remark}\label{rmk:orb:pts}
In general, $\strate$ and $\lstrateabdiff$ are not manifolds, so Proposition~\ref{prop:eigen:sp:loc:coord} does not apply to every point of $\strate$. Nevertheless, it is a well known fact that $\strate$ and $\lstrateabdiff$ admit finite coverings that are manifolds. Therefore, there exists some finite covering $\widetilde{\strate}$ of $\strate$ of which  Proposition~\ref{prop:eigen:sp:loc:coord} applies to {\em every} point, that is the map $\Xi$ defines  a holomorphic local chart for $\widetilde{\strate}$ in a neighborhood of its every point.

In what follows, we will implicitly be working with $\widetilde{\strate}$.
However, to lighten the discussion,  by a slight abuse we will use $\Xi$ as local charts in a neighborhood of every point in $\strate$.
\end{Remark}

An immediate consequence of Proposition~\ref{prop:eigen:sp:loc:coord} is that $\dim V_\zeta=\dim V= \dim \strate$. Thus by the works \cite{Veech:FS, BCGGM2, Sch16}

\begin{Corollary}\label{cor:dim:Vzeta}
We have
$$
\dim_\C V=\left\{
\begin{array}{ll}
2g+n-1 & \text{ if } d=1, \\
2g+n-2 & \text{ if } d \geq 2.
\end{array}
\right.
$$
\end{Corollary}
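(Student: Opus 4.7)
The plan is to compute $\dim V = \dim V_\zeta$ directly using the combinatorial description from Lemma~\ref{lm:sys:S:eigen:sp}, via a character/Euler-characteristic argument on the $T_\zeta$-equivariant cellular cochain complex, thereby bypassing the cited references. The case $d = 1$ is immediate: the morphism $\veps$ is trivial, the cover $\hat{M} \to M$ is trivial (so $\hat{M} = M$, $\hat{\Sig} = \Sig$, $T_\zeta = \Id$), and $V = V_\zeta = H^1(M, \Sig, \C)$. Since $\Sig \neq \vide$ one has $H^0(M, \Sig) = 0$ and $H^2(M, \Sig) \simeq \C$, so $\dim V = 1 - \chi(M, \Sig) = 2g + n - 1$.

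For $d \geq 2$, I would first observe that $T_\zeta$ is a nontrivial deck transformation of the regular covering $\hat{M}' \to M'$, so its fixed point set is contained in $\hat{\Sig}$. Since the open cells of $\hat{\Tcal}$ (open edges and open triangles) are simply connected subsets of $\hat{M}'$, each lifts to exactly $d$ disjoint pre-images in $\hat{M}'$; hence $T_\zeta$ acts freely on both $\hat{\Tcal}^{(1)}$ and $\hat{\Tcal}^{(2)}$. The relative cellular cochain complex $C^*(\hat{M}, \hat{\Sig}, \C)$ satisfies $C^0 = 0$, $\dim C^1 = N_1$, $\dim C^2 = N_2$, and the $T_\zeta$-action commutes with the coboundary. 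Extracting $\zeta$-eigenspaces yields a subcomplex $C^*_\zeta$ with $\dim C^1_\zeta = N_1/d$ and $\dim C^2_\zeta = N_2/d$.

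Since $T_\zeta$ is orientation-preserving it acts trivially on $H^2(\hat{M}, \hat{\Sig}, \C) \simeq \C$, so $H^2_\zeta = 0$ for $\zeta \neq 1$, and $H^0(\hat{M}, \hat{\Sig}) = 0$. The Euler characteristic of $C^*_\zeta$ equals that of its cohomology, giving
\[
-\dim V_\zeta \;=\; \sum_i (-1)^i \dim H^i_\zeta \;=\; \sum_i (-1)^i \dim C^i_\zeta \;=\; \frac{N_2 - N_1}{d}.
\]
Combined with $\chi(\hat{M}) = \hat{n} - N_1 + N_2 = 2 - 2\hat{g}$, one gets $\dim V_\zeta = (2\hat{g} + \hat{n} - 2)/d$. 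A direct application of the Riemann--Hurwitz identity~\eqref{eq:cov:gen:R-H} then yields $2\hat{g} + \hat{n} - 2 = d(2g-2) + \sum_i \frac{d}{d_i}(d_i - 1) + \sum_i \frac{d}{d_i} = d(2g + n - 2)$, whence $\dim V_\zeta = 2g + n - 2$.

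The only technical point where care is required is the freeness of the $T_\zeta$-action on $\hat{\Tcal}^{(1)}$ and $\hat{\Tcal}^{(2)}$, which hinges on observing that any stabilized cell would force a fixed point of some nontrivial power of $T_\zeta$ off $\hat{\Sig}$, contradicting the freeness of nontrivial deck transformations of $\hat{M}' \to M'$. Once this is in place, the computation is a clean combinatorial consequence of Lemma~\ref{lm:sys:S:eigen:sp} and Riemann--Hurwitz, and no appeal to the external references is necessary.
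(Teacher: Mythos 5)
Your proof is correct, but it takes a genuinely different route from the one in Appendix~\ref{sec:proof:dim:Vzeta}. The paper argues in two separate steps: the lower bound $\dim V\geq n_1-n_2=2g+n-2$ comes from counting equations (one triangle equation per $T_\zeta$-orbit suffices once $(\Scal_2)$ is imposed), and the upper bound comes from cutting $M$ along the edges dual to the complement of a maximal subtree of $\Tcal^\vee$, lifting the resulting disc to $\hat{M}$, and exhibiting the explicit nontrivial relation \eqref{eq:bdry:disc} among the $2g+n-1$ remaining coordinates. Your argument packages both bounds into a single equivariant Euler-characteristic identity for the relative cellular cochain complex: the lower-bound step of the paper is exactly the inequality $\dim H^1_\zeta\geq \dim C^1_\zeta-\dim C^2_\zeta$, and the paper's nontrivial boundary relation is replaced by the more conceptual observation that $T_\zeta$, being orientation-preserving, acts trivially on $H^2(\hat{M},\hat{\Sig},\C)\simeq\C$, so that $H^2_\zeta=0$ for $\zeta\neq 1$. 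The hypotheses you need --- freeness of the action on $\hat{\Tcal}^{(1)}$ and $\hat{\Tcal}^{(2)}$, semisimplicity of $\C[\Z/d\Z]$ so that eigenspaces commute with cohomology, and the fact that a signed cyclic permutation matrix of order $d$ has each $d$-th root of unity as a simple eigenvalue --- are all satisfied (the paper itself records the freeness in Section~\ref{sec:loc:chart:tria}), and your final identity $2\hat{g}+\hat{n}-2=d(2g+n-2)$ is just $\chi(\hat{M}')=d\,\chi(M')$, consistent with \eqref{eq:cov:gen:R-H}. What your approach buys is brevity, independence from the choice of a fundamental domain, and the extra information that \emph{every} eigenvalue $\neq 1$ of $T_\zeta^*$ on $H^1(\hat{M},\hat{\Sig},\C)$ has multiplicity $2g+n-2$; what the paper's approach buys is an explicit set of $2g+n-1$ coordinates on $V$ cut out by one explicit relation, which stays entirely within the elementary linear-algebra framework of the systems \eqref{eq:lin:equa:tria} and \eqref{eq:lin:equa:sym} used for the local charts.
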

In the Appendix \ref{sec:proof:dim:Vzeta}, we give an independent proof of this fact using exclusively the triangulation $\hat{\Tcal}$ of $\hat{M}$ and the action of $T_\zeta$.

\subsection{Switching the marking}\label{sec:marking}
To define $\Xi$, one needs to specify a homeomorphism $f_0: (M,\Sig) \ra (X_0,Z(q_0))$.
The homotopy class of $f_0$ will be referred to as a {\em marking}.
The maps $\Xi$ in Proposition~\ref{prop:eigen:sp:loc:coord} provide us with an atlas for $\strate$.
Transition maps of the atlas correspond to changes of markings.

Consider now another homeomorphism $f'_0: M \ra X_0$ such that $f'_0(s_i) =x^0_i, \; i=1,\dots,n$. Let $\Gamma':=\ker(\veps_0\circ f'_{0*}) \subset \pi_1(M',*)$, and $\pi': \hat{N}'\ra M'$ be the covering associated with $\Gamma'$. There is a compact surface $\hat{N}$ and a finite subset $\hat{\Pi}\subset \hat{N}$ such that $\hat{N}'\simeq \hat{N}\setminus\hat{\Pi}$ and $\pi'$ extends to a branched covering $\pi':\hat{N} \ra M$.
By definition, $h:={f'_0}^{-1}\circ f_0: M \ra M$ is a homeomorphism which is identity on the subset $\Sig$. In particular, $h$ restricts to a homeomorphism of $M'$.

Since we have $h_*(\Gamma)=\Gamma'$, there exists a homeomorphism $\hat{h}: \hat{M}' \ra \hat{N}'$ which lifts  $h$, that is we have the following commutative diagram
\begin{center}
\begin{tikzpicture}[scale=0.4]
\node (A) at (0,4)  {$\hat{M}'$};

\node (B) at (6,4)  {$\hat{N}'$};

\node (C) at (0,0) {$M'$};

\node (D) at (6,0) {$M'$};

\path (A) edge[->, >=angle 60, font=\scriptsize] node[above] {$\hat{h}$} (B)
      (A) edge[->, >=angle 60, font=\scriptsize] node[left] {$\pi$} (C)
      (B) edge[->, >=angle 60, font=\scriptsize] node[right] {$\pi'$} (D)
      (C) edge[->, >=angle 60, font=\scriptsize] node[below] {$h$} (D);

\end{tikzpicture}
\end{center}
The map $\hat{h}$ extends to a homeomorphism $\hat{h}: \hat{M} \ra \hat{N}$ sending $\hat{\Sig}$ onto $\hat{\Pi}$, and hence induces an isomorphism $\hat{h}^*: H^1(\hat{N},\hat{\Pi},\C) \ra H^1(\hat{M},\hat{\Sig},\C)$. Note that $\hat{h}^*$ restricts to  isomorphisms between $H^1(\hat{N},\hat{\Pi},\Z)$ and $H^1(\hat{M},\hat{\Sig},\Z)$, and between $H^1(\hat{N},\Z)$ and $H^1(\hat{M},\Z)$ respectively.

Let $\alpha'\in \pi_1(M',*)$ be an element such that $\veps_0(f'_{0*}(\alpha'))=\zeta \in  \Z/d\Z$. Then $\alpha'$ gives rise to a homeomorphism $T'_\zeta: \hat{N} \ra \hat{N}$ preserving the set $\hat{\Pi}$ which satisfies $\pi'\circ T'_\zeta=\pi'$, and ${T'_\zeta}^d=\id$. Let $V'_\zeta:=\ker(T'_\zeta -\zeta\Id) \subset H^1(\hat{N},\hat{\Pi},\C)$.

Since $T_\zeta$ (resp. $T'_\zeta$) preserves $H^1(\hat{M},\hat{\Sig},\Z)$ (resp. $H^1(\hat{N},\hat{\Pi},\Z)$), there is a basis of $V_\zeta$ (resp. of $V'_\zeta$) consisting of elements in $V_\zeta\cap H^1(\hat{M},\hat{\Sig},\Q(\zeta))$ (resp. in $V'_\zeta\cap H^1(\hat{N},\hat{\Pi},\Q(\zeta))$).
\begin{Lemma}\label{lm:marking:change}
The map $\hat{h}^*$ restricts to an isomorphism $\hat{h}^*: V'_\zeta \ra V_\zeta$ which maps $V'_\zeta\cap H^1(\hat{N},\hat{\Pi},\Q(\zeta))$ onto $V_\zeta\cap H^1(\hat{M},\hat{\Sig},\Q(\zeta))$.
\end{Lemma}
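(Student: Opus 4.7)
The plan is to produce a lift $\hat{h}:\hat{M}'\to\hat{N}'$ of $h$ that intertwines the two cyclic actions, namely $\hat{h}\circ T_\zeta=T'_\zeta\circ\hat{h}$. Once this equivariance is established, both assertions of the lemma follow by pullback.

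To establish equivariance, I will choose base points as follows. After an isotopy of $h$ rel $\Sig$ if necessary, we may assume $h(s_0)=s_0$. Pick $\hat{s}_0\in\pi^{-1}(s_0)$ and set $\hat{s}'_0:=\hat{h}(\hat{s}_0)\in (\pi')^{-1}(s_0)$. From $f_0=f'_0\circ h$ we get $\veps_0\circ f_{0*}=(\veps_0\circ f'_{0*})\circ h_*$, confirming $h_*(\Gamma)=\Gamma'$. Choose $\alpha\in\pi_1(M',s_0)$ with $\veps_0(f_{0*}(\alpha))=\zeta$, so that also $\veps_0(f'_{0*}(h_*\alpha))=\zeta$. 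The $\pi$-lift of $\alpha$ starting at $\hat{s}_0$ ends at $T_\zeta(\hat{s}_0)$, while the $\pi'$-lift of $h_*\alpha$ starting at $\hat{s}'_0$ ends at $T'_\zeta(\hat{s}'_0)$. Since $\hat{h}$ carries the first lift to a $\pi'$-lift of $h_*\alpha$ based at $\hat{s}'_0$, unique path-lifting forces $\hat{h}(T_\zeta\hat{s}_0)=T'_\zeta(\hat{s}'_0)=(T'_\zeta\circ\hat{h})(\hat{s}_0)$. Both $\hat{h}\circ T_\zeta$ and $T'_\zeta\circ\hat{h}$ are lifts of $h$ under $\pi'$ that agree at $\hat{s}_0$, hence they coincide on all of $\hat{M}'$; continuity extends the equality to $\hat{M}$.

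Taking pullbacks yields $\hat{h}^*\circ (T'_\zeta)^*=T_\zeta^*\circ\hat{h}^*$ as maps $H^1(\hat{N},\hat{\Pi},\C)\to H^1(\hat{M},\hat{\Sig},\C)$. Thus if $(T'_\zeta)^*\eta=\zeta\eta$, then $T_\zeta^*(\hat{h}^*\eta)=\zeta(\hat{h}^*\eta)$, so $\hat{h}^*(V'_\zeta)\subset V_\zeta$. The analogous argument for $\hat{h}^{-1}$ (itself equivariant) furnishes the reverse inclusion, so $\hat{h}^*$ restricts to an isomorphism $V'_\zeta\to V_\zeta$.

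For the $\Q(\zeta)$-statement, $\hat{h}^*$ is already an isomorphism between the integral cohomology groups because $\hat{h}$ is a homeomorphism, and hence also between $H^1(\hat{N},\hat{\Pi},\Q(\zeta))$ and $H^1(\hat{M},\hat{\Sig},\Q(\zeta))$ by tensoring. Intersecting this isomorphism with the isomorphism of $\zeta$-eigenspaces produced above gives the desired bijection between $V'_\zeta\cap H^1(\hat{N},\hat{\Pi},\Q(\zeta))$ and $V_\zeta\cap H^1(\hat{M},\hat{\Sig},\Q(\zeta))$. The main obstacle is the equivariant choice of lift in the first step (and some unavoidable bookkeeping with base points); once that is pinned down, the rest of the argument is purely formal.
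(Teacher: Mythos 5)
Your proposal is correct and follows essentially the same route as the paper: both proofs reduce to the intertwining relation $\hat{h}\circ T_\zeta=T'_\zeta\circ\hat{h}$ (obtained by matching $\alpha$ with $h_*\alpha$ via $\veps_0\circ f_{0*}=(\veps_0\circ f'_{0*})\circ h_*$) and then deduce the eigenspace isomorphism by pullback and the $\Q(\zeta)$-statement from integrality of $\hat{h}^*$. The only difference is that you spell out the base-point and unique-path-lifting details that the paper compresses into ``by construction.''
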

\begin{proof}
 Let $\alpha =h_*^{-1} (\alpha') \in \pi_1(M',*)$. By assumption, we have $\veps_0(f_{0*}(\alpha))=\veps_0(f'_{0*}(\alpha'))=\zeta$. Thus $\alpha$ gives rise to the homeomorphism $T_\zeta: \hat{M}\ra \hat{M}$.
 By construction, we have $\hat{h}\circ T_\zeta= T'_\zeta\circ \hat{h}$. Thus $\hat{h}^*$ restricts to an isomorphism between $\ker(T'_\zeta-\zeta\Id)\subset H^1(\hat{N},\hat{\Pi},\C)$ and $\ker(T_\zeta-\zeta\Id)\subset H^1(\hat{M},\hat{\Sig},\C)$.
The last assertion follows from the fact that $\hat{h}^*$ maps $H^1(\hat{N},\hat{\Pi},\Q(\zeta))$ onto $H^1(\hat{M},\hat{\Sig},\Q(\zeta))$.
\end{proof}

\medskip

In the case $\Gamma'=\Gamma$, the surfaces $\hat{N}$ and $\hat{M}$ are identified, and we have
\begin{Lemma}\label{lm:spec:marking:change}
Suppose that $\Gamma'=\Gamma$. Then there exists a generator $\zeta'$ of $\Z/d\Z$ such that $V'_\zeta = V_{\zeta'} \subset H^1(\hat{M},\hat{\Sig},\C)$, where $V_{\zeta'}=\ker(T_\zeta-\zeta'\Id)$. Furthermore, $V'_\zeta=V_\zeta$ if and only if $\veps_0\circ f'_{0*}=\veps_0\circ f_{0*}$.
\end{Lemma}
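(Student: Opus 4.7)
The plan is to track how a change of marking affects the deck transformation $T_\zeta$ of the cyclic covering, and then read off what happens to the eigenspace $V_\zeta$. First I would set $\veps := \veps_0 \circ f_{0*}$ and $\veps' := \veps_0 \circ f'_{0*}$. Since $\Gamma = \Gamma'$ by hypothesis, the two surjections $\veps, \veps' \colon \pi_1(M',*) \to \Z/d\Z$ have the same kernel, so $\veps' = \phi \circ \veps$ for a unique automorphism $\phi$ of $\Z/d\Z$. Writing $\Z/d\Z = \mu_d$ multiplicatively, $\phi$ has the form $\eta \mapsto \eta^k$ for some $k \in (\Z/d\Z)^{\times}$.

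Next I would express $T'_\zeta$ in terms of $T_\zeta$. Pick $\alpha' \in \pi_1(M',*)$ with $\veps'(\alpha') = \zeta$; then $\veps(\alpha')^k = \zeta$, so $\veps(\alpha') = \zeta^{k^{-1}}$, where $k^{-1}$ denotes the inverse of $k$ modulo $d$. By the very definition of the homeomorphisms $T_{\eta}$ introduced in Section~\ref{sec:cyclic:cov}, this gives $T'_\zeta = T_{\zeta^{k^{-1}}} = T_\zeta^{k^{-1}}$, the last equality because $\eta \mapsto T_\eta$ is a group isomorphism from $\mu_d$ onto the deck group $\langle T_\zeta \rangle$. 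Since $T_\zeta$ has order $d$, it is diagonalizable and
$$H^1(\hat{M},\hat{\Sig},\C) = \bigoplus_{\eta \in \mu_d} \ker(T_\zeta - \eta\Id).$$
A vector $v$ in the $\eta$-eigenspace satisfies $T'_\zeta v = \eta^{k^{-1}} v$, hence lies in $V'_\zeta = \ker(T'_\zeta - \zeta\Id)$ if and only if $\eta = \zeta^k$. Setting $\zeta' := \zeta^k$, which is a primitive $d$-th root of unity because $k$ is coprime to $d$, this yields $V'_\zeta = V_{\zeta'}$.

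For the final equivalence, I would apply Corollary~\ref{cor:dim:Vzeta} to each of the markings $f_0$ and $f'_0$ separately to get $\dim V_\zeta = \dim V'_\zeta = \dim V_{\zeta'} \geq 1$, the last inequality coming from the standing assumption $\chi(M') < 0$. Since the eigenspaces of the diagonalizable operator $T_\zeta$ are in direct sum, equality $V_\zeta = V_{\zeta'}$ of two nonzero subspaces forces $\zeta = \zeta'$, which translates to $k \equiv 1 \pmod d$, i.e.\ $\phi = \id$, i.e.\ $\veps = \veps'$, as required. The only subtle step I anticipate is making sure the exponent $k^{-1}$ (rather than $k$) lands in the identification $T'_\zeta = T_\zeta^{k^{-1}}$; once this is pinned down everything else is elementary linear algebra.
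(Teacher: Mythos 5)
Your proof is correct and follows essentially the same route as the paper's: both identify $T'_\zeta$ as a power of $T_\zeta$ (your exponent $k^{-1}$ is the paper's $k$) and then read off the eigenspace, the only cosmetic difference being that you package the change of marking as an automorphism $\phi$ of $\Z/d\Z$ rather than as the map induced by $h_*$ on $\pi_1(M',*)/\Gamma$. Your final step is in fact slightly more careful than the paper's, since you explicitly invoke $\dim V_\zeta=\dim V_{\zeta'}\geq 1$ to conclude that equality of the two eigenspaces forces $\zeta=\zeta'$.
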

\begin{proof}
By definition, we have $\pi_1(M',*)/\Gamma\simeq \Z/d\Z$ with the identification given by $\veps_0\circ f_{0*}$.
Let $\alpha\in \pi_1(M',*)$ be an element such that $\veps_0(f_{0*}(\alpha))=\zeta$ and $\alpha'=h_*(\alpha)$. Note that $\veps_0(f'_{0*}(\alpha'))=\veps_0(f_{0*}(\alpha))=\zeta$.
By assumption, $\alpha$ is a generator of $\pi_1(M',*)/\Gamma\simeq \Z/d\Z$, there exists $k\in \Z$ such that $\alpha'=\alpha^k \mod \Gamma$.
Since $h_*: \pi_1(M',*) \ra \pi_1(M',*)$ is an isomorphism which preserves $\Gamma$, it induces an isomorphism of $\Z/d\Z$.
Therefore, $\alpha'$ is also a generator of $\pi_1(M',*)/\Gamma$, which means that $\gcd(k,d)=1$.
By definition, $T_\zeta$ and $T'_\zeta$ are the automorphisms  of the covering $\pi: \hat{M}'\ra M'$ associated with $\alpha$ and $\alpha'$ respectively. Therefore, we have $T'_\zeta= T^k_{\zeta}$.

Let $\zeta'=h^{-1}_*(\zeta)$. Then $\zeta'$ is a generator of $\Z/d\Z$ and ${\zeta'}^k=\zeta$.
Now
$$
V'_\zeta:=\ker(T'_{\zeta}-\zeta\Id)=\ker(T^k_\zeta-\zeta\Id)=\ker(T_\zeta - \zeta'\Id)=V_{\zeta'}
$$
and the first assertion follows. For the second assertion, it is enough to observe that $\zeta'=\zeta$ if and only if $h_*$ is identity on $\Z/d\Z$.
\end{proof}

An immediate consequence of Lemma~\ref{lm:marking:change} is the following

\begin{Corollary}~\label{cor:aff:mfd:structure}
 The atlas given by the maps $\Xi$ in Proposition~\ref{prop:eigen:sp:loc:coord} defines a structure of affine complex orbifold on $\strate$.
\end{Corollary}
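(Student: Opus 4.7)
The plan is to verify the two defining features of a complex affine orbifold atlas, namely (a) that the charts $\Xi$ in Proposition~\ref{prop:eigen:sp:loc:coord} form an orbifold atlas covering $\strate$ and (b) that every transition map between two such charts is the restriction of a complex affine isomorphism. Point (a) follows immediately from Proposition~\ref{prop:eigen:sp:loc:coord} applied at every point of the manifold cover $\widetilde{\strate}$ alluded to in Remark~\ref{rmk:orb:pts}, so only (b) requires real work. At orbifold points one simply descends the resulting atlas on $\widetilde{\strate}$ by the deck group of $\widetilde{\strate}\to\strate$, and since the latter acts by linear pullback on cohomology (hence on $V_\zeta$), this descent preserves the affine structure.

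To prove (b), I would fix a point $(X_0,q_0)$ in the overlap of two charts $\Xi$ and $\Xi'$ associated with two markings $f_0,f'_0:(M,\Sig)\to (X_0,Z(q_0))$. Set $h:=(f'_0)^{-1}\circ f_0:M\to M$, which is the identity on $\Sig$. Using Lemma~\ref{lm:marking:change}, $h$ lifts to a homeomorphism $\hat{h}:\hat{M}\to\hat{N}$ that intertwines the deck transformations $T_\zeta$ and $T'_\zeta$ (up to choosing a compatible generator of $\Z/d\Z$ as in Lemma~\ref{lm:spec:marking:change}), and induces an isomorphism $\hat{h}^*:V'_\zeta\xrightarrow{\sim} V_\zeta$. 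For any $(X,q)$ close to $(X_0,q_0)$, both markings $f,f'$ are obtained by canonical isotopy classes, so one can arrange $\hat{f}=\hat{f}'\circ\hat{h}$ (and hence the same identity between $\hat{f}$ and $\hat{f}'$ as at the base point), giving
\[
\Xi(X,q)=[\hat{f}^*\hat{\omega}]=\hat{h}^*[\hat{f}'^*\hat{\omega}]=\hat{h}^*\Xi'(X,q).
\]
Thus the transition $\Xi\circ(\Xi')^{-1}$ is simply the restriction of the $\C$-linear isomorphism $\hat{h}^*$, in particular affine. The case where $f_0$ and $f'_0$ induce the same $\veps_0\circ f_{0*}$ is handled directly; the case where they differ by an automorphism of $\Z/d\Z$ is reduced to it by Lemma~\ref{lm:spec:marking:change}, which identifies the two target spaces $V_\zeta$ and $V_{\zeta'}$ up to relabelling by the generator of the deck group.

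The main obstacle I anticipate is purely bookkeeping: ensuring that the canonical extensions of the markings to nearby points are compatible on both sides so that the identity $\hat{f}=\hat{f}'\circ\hat{h}$ persists away from $(X_0,q_0)$, and that the choice of generator $\zeta$ made in the definition of $V_\zeta$ is consistently transported across a marking change. Both issues are resolved by the rigidity of lifts of homeomorphisms to a fixed Galois cover and by Lemma~\ref{lm:spec:marking:change}; once they are dispatched, the $\C$-linearity of $\hat{h}^*$ yields the affine complex orbifold structure on $\strate$.
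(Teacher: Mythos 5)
Your proposal is correct and follows the same route as the paper, which derives the corollary directly from Lemma~\ref{lm:marking:change}: transition maps of the atlas come from changes of marking, and these act on the period coordinates through the $\C$-linear isomorphism $\hat{h}^*:V'_\zeta\to V_\zeta$, hence are affine. Your additional bookkeeping (the manifold cover $\widetilde{\strate}$ from Remark~\ref{rmk:orb:pts} and the generator change via Lemma~\ref{lm:spec:marking:change}) merely makes explicit what the paper leaves implicit.
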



\section{Projection to absolute cohomology}\label{sec:proj:abs:coh}
Let $(X_0,q_0) \in \strate$.  Let $Z_0=\{x^0_1,\dots,x^0_n\}$ denote the set of zeros and poles of $q_0$, the order of $q_0$ at $x^0_i$ is $k_i$. Let $f_0: M \ra X_0$ be a homeomorphism sending $s_i$ to $x^0_i, \; i=1,\dots,n$ (the homotopy class of the map $f_0$ is a marking of $(X_0,Z_0)$). Let $\veps_0, \Gamma, \hat{M},\hat{\Sig}, T_\zeta$ be as in Section~\ref{sec:top:prelim}.

Let $\pp: H^1(\hat{M},\hat{\Sig},\C) \ra H^1(\hat{M},\C)$ be the natural projection, that is for any $\eta \in  H^1(\hat{M},\hat{\Sig},\C)$,  $\pp(\eta)$ is the restriction of $\eta$ to the cycles in $H_1(\hat{M},\Z)$.
We have the following exact sequence
\begin{equation}\label{eq:coh:ex:seq:gen}
0 \to H^0(\hM,\C) \to H^0(\hSig,\C) \to H^1(\hM,\hSig,\C) \overset{\pp}{\to} H^1(\hM,\C) \to 0.
\end{equation}
Since $T_\zeta$ is a homeomorphism of $\hM$ preserving the set $\hSig$, its actions on the cohomology spaces in \eqref{eq:coh:ex:seq:gen} are equivariant.
Thus, using the fact that $T_\zeta$ has finite order,  we get the following exact sequence
\begin{equation}\label{eq:coh:ex:seq:eig:sp}
0 \to  H^0(\hM,\C)_\zeta \to H^0(\hSig,\C)_\zeta \to H^1(\hM,\hSig,\C)_\zeta=V_\zeta \overset{\pp}{\to} H^1(\hM,\C)_\zeta \to 0
\end{equation}
where the subscript $\bullet_\zeta$ means the $\zeta$-eigenspace of the action of $T_\zeta$ on the corresponding space.
As an immediate consequence, we get
\begin{Lemma}\label{lm:proj:coh:surj}
Let $H_\zeta=\ker(T_\zeta -\zeta\Id) \subset H^1(\hat{M},\C)$.
Then we have $H_\zeta=\pp(V_\zeta)$. 
\end{Lemma}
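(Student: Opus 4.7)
The plan is to use the fact that $\pp$ is equivariant for the action of $T_\zeta^*$ on the absolute and relative cohomologies, together with Maschke-type semisimplicity of the finite cyclic group $\langle T_\zeta \rangle$ acting on complex vector spaces.

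First I would observe the easy inclusion $\pp(V_\zeta) \subset H_\zeta$: since $T_\zeta$ is a homeomorphism of the pair $(\hat{M},\hat{\Sig})$, the projection $\pp$ commutes with $T_\zeta^*$, so for any $\eta\in V_\zeta$ we have $T_\zeta^*\pp(\eta)=\pp(T_\zeta^*\eta)=\zeta\,\pp(\eta)$, whence $\pp(\eta)\in H_\zeta$.

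For the converse inclusion I would proceed by a symmetrization argument. The long exact sequence of the pair $(\hat{M},\hat{\Sig})$ together with $H^1(\hat{\Sig},\C)=0$ shows that $\pp:H^1(\hat{M},\hat{\Sig},\C)\to H^1(\hat{M},\C)$ is already surjective. Hence, given any $\alpha\in H_\zeta$, we can pick an arbitrary lift $\tilde\alpha_0\in H^1(\hat{M},\hat{\Sig},\C)$ with $\pp(\tilde\alpha_0)=\alpha$. The main step is then to apply the Reynolds projector onto the $\zeta$-eigenspace:
$$
\tilde\alpha:=\frac{1}{d}\sum_{k=0}^{d-1}\zeta^{-k}(T_\zeta^*)^k\tilde\alpha_0.
$$
Using $T_\zeta^d=\id$ and reindexing, one verifies at once that $T_\zeta^*\tilde\alpha=\zeta\tilde\alpha$, so that $\tilde\alpha\in V_\zeta$. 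Since $\pp$ commutes with $T_\zeta^*$, it commutes with the averaging operator, and hence
$$
\pp(\tilde\alpha)=\frac{1}{d}\sum_{k=0}^{d-1}\zeta^{-k}(T_\zeta^*)^k\pp(\tilde\alpha_0)=\frac{1}{d}\sum_{k=0}^{d-1}\zeta^{-k}\zeta^k\alpha=\alpha,
$$
so $\alpha\in\pp(V_\zeta)$, concluding the proof.

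I do not anticipate a real obstacle: the statement is an instance of the fact that the $\zeta$-isotypic component functor is exact on complex representations of $\Z/d\Z$, applied to the surjective $T_\zeta$-equivariant map $\pp$. The one subtlety worth emphasizing is the commutation of $\pp$ with $(T_\zeta^*)^k$, which ultimately rests on the fact that the homeomorphism $T_\zeta$ of $\hat{M}$ preserves the set $\hat{\Sig}$ (so that it induces an endomorphism of the pair and not merely of $\hat M$).
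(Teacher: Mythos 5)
Your proof is correct, and it reaches the conclusion by a slightly different mechanism than the paper. Both arguments ultimately rest on the same two ingredients: the surjectivity of $\pp$ (which the paper uses implicitly when it writes $H^1(\hat{M},\C)\simeq H^1(\hat{M},\hat{\Sig},\C)/\ker\pp$, and which you justify explicitly via the long exact sequence of the pair and $H^1(\hat{\Sig},\C)=0$) and the semisimplicity of the action of the finite cyclic group $\langle T_\zeta\rangle$. Where you differ is in how semisimplicity is exploited: you lift an arbitrary preimage and apply the Reynolds projector $\frac{1}{d}\sum_{k}\zeta^{-k}(T_\zeta^*)^k$ to push it into $V_\zeta$ without changing its image under $\pp$, whereas the paper chooses a $T_\zeta$-invariant positive definite Hermitian product on $H^1(\hat{M},\hat{\Sig},\C)$ and takes the orthogonal complement $W_1$ of $\ker\pp$, obtaining an invariant splitting $V_\zeta=W_{0,\zeta}\oplus W_{1,\zeta}$ with $\pp$ restricting to an isomorphism $W_{1,\zeta}\to H_\zeta$. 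Your version is more economical and self-contained for the statement at hand; the paper's version produces, as a by-product, an explicit $T_\zeta$-invariant complement of $\ker\pp\cap V_\zeta$ inside $V_\zeta$, which is convenient for the dimension count $\dim V_\zeta=\dim H_\zeta+\dim(\ker\pp\cap V_\zeta)$ used later (Proposition~\ref{prop:ker:project:coh} and Lemma~\ref{lm:vol:form:Vzeta}). Your closing remark about the commutation of $\pp$ with $T_\zeta^*$ resting on $T_\zeta(\hat{\Sig})=\hat{\Sig}$ is exactly the right point to flag.
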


Let $r=\card\left(\{k_1,\dots,k_n\}\cap(d\Z)\right)$.
We can suppose that $k_i\in d\Z$ if and only if $i \in \{1,\dots,r\}$.
Our goal now is to show
\begin{Proposition}\label{prop:ker:project:coh}
For $i=1,\dots,r$, pick a point $\hat{s}_{i}$ in $\pi^{-1}(\{s_i\})$.
Let $c_i$ be a path from $\hat{s}_{i}$ to $T_\zeta(\hat{s}_{i})$.
If $d\geq 2$, then we have
$$
\dim \ker\pp\cap V_\zeta=r,
$$
and there is a basis $\{\eta_1,\dots,\eta_r\}$ of $\ker\pp\cap V_\zeta$ such that $\eta_j(c_i)=(\zeta-1)\delta_{ij}$.
\end{Proposition}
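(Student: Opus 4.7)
The plan is to apply the long exact sequence of the pair $(\hat{M},\hat{\Sig})$ and exploit the semisimplicity of the $T_\zeta$-action on cohomology.

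First, since $H^1(\hat{\Sig},\C)=0$, the long exact sequence of $(\hat{M},\hat{\Sig})$ specializes to the short exact sequence
\begin{equation*}
0 \longrightarrow H^0(\hat{\Sig},\C)/H^0(\hat{M},\C) \xrightarrow{\;\partial\;} H^1(\hat{M},\hat{\Sig},\C) \xrightarrow{\;\pp\;} H^1(\hat{M},\C) \longrightarrow 0,
\end{equation*}
and every arrow is $T_\zeta$-equivariant. Because $T_\zeta$ has finite order $d$, the operator $\tfrac{1}{d}\sum_{k=0}^{d-1}\zeta^{-k}T_\zeta^{\,k}$ is a projection onto the $\zeta$-eigenspace, so taking $\zeta$-eigenspaces preserves exactness. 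Since $d\geq 2$ forces $\zeta\neq 1$ and $T_\zeta$ acts trivially on $H^0(\hat{M},\C)\simeq\C$, the $\zeta$-eigenspace of $H^0(\hat{M},\C)$ vanishes, so restricting the sequence to $\zeta$-eigenspaces gives an isomorphism
\begin{equation*}
\ker\pp\cap V_\zeta \;\simeq\; H^0(\hat{\Sig},\C)_\zeta,
\end{equation*}
where the subscript denotes the $\zeta$-eigenspace of the induced action of $T_\zeta$.

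Next I would compute the right-hand side by decomposing $H^0(\hat{\Sig},\C)=\bigoplus_{i=1}^n \C^{\pi^{-1}(s_i)}$ into $T_\zeta$-stable summands indexed by $T_\zeta$-orbits. On $\pi^{-1}(s_i)$, of cardinality $n_i=d/d_i$, the homeomorphism $T_\zeta$ acts as a single cycle of length $n_i$ (since $T_\zeta$ generates the deck group and each point has stabilizer of order $d_i$), so the associated permutation representation on $\C^{\pi^{-1}(s_i)}$ has eigenvalues exactly the $n_i$-th roots of unity, each with multiplicity one. Since $n_i\mid d$ and $\zeta$ has exact order $d$, $\zeta$ is an $n_i$-th root of unity iff $n_i=d$, i.e.\ $d_i=1$, i.e.\ $k_i\in d\Z$, i.e.\ $i\in\{1,\dots,r\}$. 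Summing over orbits yields $\dim H^0(\hat{\Sig},\C)_\zeta=r$, hence the dimension formula.

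Finally, to exhibit the basis $\{\eta_j\}$ with $\eta_j(c_i)=\delta_{ij}$, for each $j\in\{1,\dots,r\}$ I would set
\begin{equation*}
\phi_j\colon \hat{\Sig}\to\C,\qquad \phi_j(T_\zeta^{\,k}(\hat{s}_j))=\frac{\zeta^{k}}{\zeta-1}\ \ (k=0,\dots,d-1),\quad \phi_j\equiv 0 \text{ on } \hat{\Sig}\smin\pi^{-1}(s_j).
\end{equation*}
A direct check gives $T_\zeta\cdot\phi_j=\zeta\phi_j$, so by equivariance of $\partial$ the class $\eta_j:=\partial(\phi_j)$ lies in $V_\zeta\cap\ker\pp$. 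The connecting map satisfies $\partial(\phi)(c)=\phi(c(1))-\phi(c(0))$ on any relative $1$-cycle with boundary in $\hat{\Sig}$, hence $\eta_j(c_i)=\phi_j(T_\zeta(\hat{s}_i))-\phi_j(\hat{s}_i)$; this equals $(\zeta-1)/(\zeta-1)=1$ when $i=j$ and vanishes when $i\neq j$ because then both endpoints of $c_i$ lie outside the support of $\phi_j$. Linear independence of the $\eta_j$ follows from the pairing matrix $(\eta_j(c_i))_{i,j}=\mathrm{Id}$, and the count $r$ from the previous step shows they span $\ker\pp\cap V_\zeta$. The one delicate point is the exactness of the $\zeta$-eigenspace functor on the $T_\zeta$-equivariant long exact sequence; once that is in place the remainder reduces to a direct orbit-by-orbit computation.
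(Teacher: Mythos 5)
Your proof is correct, and it reaches the result by a genuinely different (and shorter) route than the paper. The paper establishes the injectivity of the evaluation map $\eta\mapsto(\eta(c_1),\dots,\eta(c_r))$ by hand, through an explicit case-by-case analysis of periods of paths between points of $\hat{\Sig}$ (its Lemmas~\ref{lm:ker:proj:coh:1} and~\ref{lm:ker:proj:coh:2}, which occupy most of Section~\ref{sec:proj:abs:coh}), and then proves surjectivity by exhibiting the vectors $\vv_i\in\C^{\hat\Sig}$ with $\vv_i(T_\zeta^j(\hat{s}_i))=\zeta^j$ supported on a single fiber --- essentially your $\phi_i$ up to the factor $\zeta-1$. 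You instead get the dimension count abstractly: the short exact sequence $0\to H^0(\hat\Sig,\C)/H^0(\hat M,\C)\to H^1(\hat M,\hat\Sig,\C)\to H^1(\hat M,\C)\to 0$ is $T_\zeta$-equivariant, the eigenprojector $\frac{1}{d}\sum_k\zeta^{-k}T_\zeta^k$ preserves exactness, the $\zeta$-eigenspace of the constants dies because $\zeta\neq1$, and the $\zeta$-eigenspace of the permutation module $\bigoplus_i\C^{\pi^{-1}(s_i)}$ has dimension equal to the number of fibers on which $T_\zeta$ acts as a full $d$-cycle, namely $r$. Your explicit dual basis then gives independence for free via $(\eta_j(c_i))=\mathrm{Id}$, and the dimension count upgrades it to a basis. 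The one point you flag as delicate --- exactness of the eigenspace functor --- is indeed the only thing to justify, and it holds because $\C[\Z/d\Z]$ is semisimple (the eigenprojector commutes with all maps in the sequence). What the paper's longer argument buys is a completely elementary, self-contained description of how \emph{every} relative period of an element of $\ker\pp\cap V_\zeta$ is expressed in terms of the $\eta(c_i)$ (its Lemma~\ref{lm:ker:proj:coh:2} gives explicit coefficients), which is more information than the bare dimension statement; your argument buys brevity and a conceptual explanation of why the answer is $r$, namely that it is a character multiplicity in the permutation representation on $\hat\Sig$.
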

\begin{Remark}\label{rmk:Veech:ker:proj}\hfill
\begin{itemize}
 \item This result has been known to Veech (see~\cite[Sect. 8]{Veech:FS}), we will provide here an independent proof adapted to our setting.

 \item In the case $d=1$ we have $r=n$, $(\hat{M},\hat{\Sig})=(M,\Sig)$, and $V_\zeta=H^1(M,\Sig,\C)$. In particular, $\dim\ker\pp=n-1=r-1$.
\end{itemize}
\end{Remark}
\begin{proof}
Since $d>1$, we have $\zeta\neq 1$, hence $H^0(\hM,\C)_\zeta=\{0\}$.
Thus the exact sequence \eqref{eq:coh:ex:seq:eig:sp} implies the following
\begin{equation}\label{eq:coh:ex:seq:eig:sp:red}
 0 \to  H^0(\hSig,\C)_\zeta \to V_\zeta \overset{\pp_{|V_\zeta}}{\to} H_\zeta \to 0
\end{equation}
which means that $\ker\pp\cap V_\zeta=\ker \pp_{|V_\zeta} = H^0(\hSig,\C)_\zeta$.

For $i=1,\dots,n$, let $\del_i$ be an element of $\pi_1(M',*)$ represented by a loop freely homotopic to the boundary of a small disc about $s_i$.
Since $x^0_i$ is a zero of order $k_i$ of $q_0$, we have $\veps_0\circ f_{0*}(\del_i)=e^{\frac{2\pi\imath}{d}k_i}\simeq k_i \in \Z/d\Z$.
Let $n_i=\gcd(d,k_i)$. Note that the order of $k_i$ in $\Z/d\Z$ is $\frac{d}{n_i}$.
It follows that $n_i=\card(\pi^{-1}(\{s_i\}))$, and $T_\zeta^{n_i}$ is identity on the set $\pi^{-1}(\{s_i\})$.
Choose a point $\hat{s}_i$ in $\pi^{-1}(\{s_i\})$, then $\pi^{-1}(\{s_i\})=\{T^j_\zeta(\hat{s}_i), \; j=0,\dots,n_i-1\}$.

Consider an element $\eta \in H^0(\hSig,\C)_\zeta$. We first show that $\eta(\hat{s}_i)=0$ for all $i=r+1,\dots,n$. Indeed, by assumption, we have $k_i \not\equiv 0 \mod d$. Thus $n_i < d$. Hence
$$
\zeta^{n_i}\eta(\hat{s}_i)=T_\zeta^{n_i*}\eta(\hat{s}_i)=\eta(T^{n_i}_\zeta(\hat{s}_i))=\eta(\hat{s}_i).
$$
Since $\zeta^{n_i}\neq 1$, we must have $\eta(\hat{s}_i)=0$.

Since $\eta(T_\zeta(\hat{s}))=\zeta\eta(\hat{s})$ for all $\hat{s}\in \hSig$,  we see that $\eta$ is uniquely determined by $(\eta(\hat{s}_1),\dots,\eta(\hat{s}_r))$.
Therefore, a basis of $H^0(\hSig,\C)_\zeta$ is given by $\{\eta_1,\dots,\eta_r\}$, where for $i\in\{1,\dots,n\}, \; k\in \{0,\dots,d-1\}$,
$$
\eta_j(T^k(\hat{s}_i))= \left\{
\begin{array}{cl}
\zeta^k & \hbox{ if $i=j$}, \\
0 & \hbox{ otherwise}.
\end{array}
\right.
$$
Consider $\{\eta_1,\dots,\eta_r\}$ as elements of $H^1(\hM,\hSig,\C)$. We have
$$
\eta_j(c_i)=\eta_j(T_\zeta(\hat{s}_i))-\eta_j(\hat{s}_i)=(\zeta-1)\delta_{ij}
$$
as desired.
\end{proof}

\section{Volume form}\label{sec:vol:form}
\subsection{The intersection form}\label{sec:intersect:form}
On $H^1(\hat{M},\C)$ we have a natural Hermitian form $(.,.)$ defined as follows: let $(a_1,\dots,a_{\hat{g}},b_1,\dots,b_{\hat{g}})$ be a symplectic basis of $H_1(\hat{M},\Z)$. For $\eta,\mu \in H^1(\hat{M},\C)$, we have
\begin{equation}\label{eq:inter:form:def}
(\eta,\mu)=\frac{\imath}{2}\sum_{j=1}^{\hat{g}} \left(\eta(a_j)\overline{\mu(b_j)} - \eta(b_j)\overline{\mu(a_j)}\right).
\end{equation}
It is well known that $(.,.)$ has signature $(\hat{g},\hat{g})$, and is preserved by all homeomorphisms of $\hat{M}$.

\begin{Lemma}\label{lm:inter:form:non:degen}
The restriction of $(.,.)$ to $H_\zeta$ is non-degenerate.
\end{Lemma}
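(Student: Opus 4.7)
The strategy is to exploit the $T_\zeta$-equivariance of the intersection form and the eigenspace decomposition of $H^1(\hat{M},\C)$ under $T_\zeta$. The Hermitian form $(.,.)$ is defined purely in terms of the cup product on $H^1(\hat{M},\Z)$, and any orientation-preserving homeomorphism of $\hat{M}$ preserves the cup product; since $T_\zeta$ is a deck transformation of a branched cyclic covering between orientable surfaces (a rotation near branch points, and a permutation of sheets elsewhere), it is orientation-preserving. Consequently
\[
(T_\zeta\eta, T_\zeta\mu)=(\eta,\mu) \quad \text{for all } \eta,\mu\in H^1(\hat{M},\C).
\]

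Next, since $T_\zeta$ has order $d$, the space $H^1(\hat{M},\C)$ decomposes as a direct sum of eigenspaces
\[
H^1(\hat{M},\C)=\bigoplus_{k=0}^{d-1} H_{\zeta^k}, \quad H_{\zeta^k}:=\ker(T_\zeta-\zeta^k\Id),
\]
with $H_\zeta=H_{\zeta^1}$. For $\eta\in H_\zeta$ and $\mu\in H_{\zeta^k}$, the equivariance combined with the sesquilinearity of $(.,.)$ (conjugate-linear in the second variable, as is visible from~\eqref{eq:inter:form:def}) gives
\[
(\eta,\mu)=(T_\zeta\eta,T_\zeta\mu)=(\zeta\eta,\zeta^k\mu)=\zeta\,\overline{\zeta^k}\,(\eta,\mu)=\zeta^{1-k}(\eta,\mu).
\]
Whenever $k\neq 1$ the scalar $\zeta^{1-k}$ is different from $1$, forcing $(\eta,\mu)=0$. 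Thus $H_\zeta$ is orthogonal to every $H_{\zeta^k}$ with $k\neq 1$ with respect to $(.,.)$.

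Finally, the non-degeneracy of $(.,.)$ on the full space $H^1(\hat{M},\C)$ transfers to $H_\zeta$: given a nonzero $\eta\in H_\zeta$, pick $\mu\in H^1(\hat{M},\C)$ with $(\eta,\mu)\neq 0$, and decompose $\mu=\sum_{k=0}^{d-1}\mu_k$ along the eigenspaces. By the orthogonality just established, $(\eta,\mu)=(\eta,\mu_1)$, so $(\eta,\mu_1)\neq 0$ with $\mu_1\in H_\zeta$. Hence $(.,.)_{|H_\zeta}$ has trivial radical and is non-degenerate. There is no real obstacle here; the only point that needs care is verifying the orientation of $T_\zeta$ and the conjugate-linearity convention of $(.,.)$ that produces the factor $\zeta\bar{\zeta^k}=\zeta^{1-k}$.
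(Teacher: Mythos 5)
Your proof is correct and follows essentially the same route as the paper: both arguments rest on the $T_\zeta$-invariance of the intersection form and the resulting orthogonality of distinct eigenspaces, the only cosmetic difference being that the paper concludes via a dimension count showing $H^1(\hat{M},\C)=H_\zeta\oplus H_\zeta^\bot$, whereas you decompose a witness vector $\mu$ along the eigenspaces directly.
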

\begin{proof}
Let $H_\zeta^\bot$ denote the orthogonal complement of $H_\zeta$ with respect to $(.,.)$.  Since $(.,.)$ is non degenerate on $H^1(\hat{M},\C)$, we have $\dim H^1(\hat{M},\C)=\dim H_\zeta+ \dim H_\zeta^\bot$.
Let $\lambda_0=\zeta, \lambda_1,\dots,\lambda_\ell$ be the eigenvalues of $T_\zeta$.
Since $T_\zeta$ has finite order, it is diagonalizable, hence we can write
$$
H^1(\hat{M},\C)=\oplus_{i=0}^\ell E_{\lambda_i}
$$
where $E_{\lambda_i}$ is the eigenspace of $\lambda_i$.
Since $T_\zeta$ preserves $(.,.)$, we have $E_{\lambda_i} \subset E_{\lambda_0}^\bot=H_\zeta^\bot$, for all $i=1,\dots,\ell$.
Thus $\oplus_{i=1}^\ell E_{\lambda_i} \subset H^\bot_\zeta$.
But we have
$$
\dim \oplus_{i=1}^\ell E_{\lambda_i} = \dim H^1(\hat{M},\C)-\dim H_\zeta=\dim H^\bot_\zeta.
$$
Therefore $\oplus_{i=1}^\ell E_{\lambda_i} =H^\bot_\zeta$, which means that
$$
H^1(\hat{M},\C)=H_\zeta \oplus H^\bot_\zeta.
$$
Hence the restriction of $(.,.)$ on $H_\zeta$ is non-degenerate.
\end{proof}

Let $\vartheta$ denote the imaginary part of $(.,.)$. By definition, $\vartheta$ is a real $2$-form on $H^1(\hat{M},\C)$. As an immediate consequence of Lemma~\ref{lm:inter:form:non:degen}, we get
\begin{Corollary}\label{cor:inter:form:symp}
The restriction of $\vartheta$ to $H_\zeta$ is non-degenerate. Hence $\vartheta^{\dim_\C H_\zeta}$ is a volume form on $H_\zeta$.
\end{Corollary}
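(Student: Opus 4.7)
The plan is to deduce Corollary~\ref{cor:inter:form:symp} directly from Lemma~\ref{lm:inter:form:non:degen} by exploiting the general relationship between a Hermitian form and its imaginary part on a complex subspace. Two ingredients are needed: first, that $H_\zeta$ is a genuine complex subspace of $H^1(\hat M,\C)$ (not merely an $\R$-subspace), and second, the standard fact from symplectic linear algebra that the top exterior power of a non-degenerate alternating $2$-form is a volume form.

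For the first ingredient, $T_\zeta$ acts $\C$-linearly on $H^1(\hat M,\C)$, hence its $\zeta$-eigenspace $H_\zeta$ is closed under multiplication by every complex scalar; in particular, multiplication by $\imath$ preserves $H_\zeta$. Write the Hermitian form as $(\eta,\mu)=g(\eta,\mu)+\imath\vartheta(\eta,\mu)$, where $g$ is $\R$-bilinear symmetric and $\vartheta$ is $\R$-bilinear antisymmetric. From the sesquilinearity one gets the usual relation $g(\eta,\mu)=\vartheta(\imath\eta,\mu)$ (and likewise $\vartheta(\eta,\imath\mu)=-g(\eta,\mu)$). Suppose $v\in H_\zeta$ satisfies $\vartheta(v,w)=0$ for all $w\in H_\zeta$. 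Since $\imath w\in H_\zeta$ whenever $w\in H_\zeta$, we also get
\[
g(v,w)=\vartheta(\imath v,w)=-\vartheta(w,\imath v)=-\vartheta(\imath v,w)\cdot(-1)\cdot\dots
\]
more cleanly: $g(v,w)=-\vartheta(v,\imath w)=0$ for all $w\in H_\zeta$. Therefore $(v,w)=g(v,w)+\imath\vartheta(v,w)=0$ for every $w\in H_\zeta$, and Lemma~\ref{lm:inter:form:non:degen} forces $v=0$. This shows that the restriction of $\vartheta$ to $H_\zeta$ is a non-degenerate alternating $\R$-bilinear form.

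For the second ingredient, write $n:=\dim_\C H_\zeta$, so that $H_\zeta$ has real dimension $2n$. A non-degenerate alternating form on a real vector space of even dimension $2n$ admits a symplectic basis (Darboux for linear symplectic forms), in which $\vartheta=\sum_{j=1}^n \xi_j\wedge \eta_j$ for dual covectors $\xi_j,\eta_j$; then $\vartheta^n=n!\,\xi_1\wedge\eta_1\wedge\cdots\wedge\xi_n\wedge\eta_n\neq 0$, which is a volume form on $H_\zeta$.

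Among the steps above, the only one that requires genuine attention is the passage from non-degeneracy of $(\cdot,\cdot)$ to non-degeneracy of $\vartheta$, and the key observation to make it work is the invariance of $H_\zeta$ under multiplication by $\imath$. The rest is standard linear (symplectic) algebra.
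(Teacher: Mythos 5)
Your proof is correct and follows the same route as the paper, which simply asserts the corollary as "an immediate consequence" of Lemma~\ref{lm:inter:form:non:degen}; the two facts you supply --- that $H_\zeta$ is a complex subspace, so the identity $\vartheta(v,\imath w)=-g(v,w)$ transfers non-degeneracy from the Hermitian form to its imaginary part, and that a non-degenerate alternating form on a $2n$-dimensional real space has nonzero $n$-th power --- are exactly the standard details the author leaves implicit. (The first, abandoned display in your argument is a false start and should be deleted, but the "more cleanly" version that replaces it is the correct computation.)
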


\subsection{Definition of the volume form $d\vol$}\label{sec:def:vol:form}
Recall that  $\dim\strate=\dim V_\zeta=N$. Let $r$ be the number of indices $i\in \{1,\dots,n\}$ such that $d$ divides $k_i$. We assume that $k_i\in d\Z$ if and only if $i\in\{1,\dots,r\}$.

In what follows, given an element $c\in H_1(\hat{M},\hat{\Sig},\Z)$, we will consider $c$ as an element of $H^1(\hat{M},\hat{\Sig},\C)^*$. Denote by $c^\zeta$ the restriction of $c$ to $V_\zeta$.
\begin{Lemma}\label{lm:vol:form:Vzeta}
Let $\hat{s}_i$ and $c_i$, $i=1,\dots,r$, be as in Proposition~\ref{prop:ker:project:coh}. Then the $(N,N)$-form
 $$
 \Theta_\zeta:=\frac{1}{(N-r)!}(\frac{\imath}{2})^r \pp^*\vartheta^{N-r} \wedge c^\zeta_1\wedge\bar{c}^\zeta_1\wedge\dots\wedge c^\zeta_r\wedge\bar{c}^\zeta_r
 $$
 is a volume form on $V_\zeta$ which does not depend on the choices of $\hat{s}_{i}$ and $c_i$.
\end{Lemma}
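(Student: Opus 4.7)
The plan is to handle the three claims (volume form, independence of $c_i$, independence of $\hat{s}_i$) separately, using the short exact sequence
\[
0 \to \ker(\pp_{|V_\zeta}) \to V_\zeta \xrightarrow{\pp} H_\zeta \to 0,
\]
in which the kernel has complex dimension $r$ by Proposition~\ref{prop:ker:project:coh} and the image has complex dimension $N-r$ by Lemma~\ref{lm:proj:coh:surj}.

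First I will check that $\Theta_\zeta$ is non-vanishing. By Corollary~\ref{cor:inter:form:symp}, $\vartheta^{N-r}/(N-r)!$ is a non-zero top volume form on $H_\zeta$, so $\pp^*\vartheta^{N-r}$ is a $(N-r,N-r)$-form that is non-degenerate on any complement of $\ker(\pp_{|V_\zeta})$ and vanishes as soon as one evaluates on a vector of $\ker(\pp_{|V_\zeta})$. On the other hand, Proposition~\ref{prop:ker:project:coh} provides a basis $\{\eta_1,\dots,\eta_r\}$ of $\ker(\pp_{|V_\zeta})$ such that $c_i^\zeta(\eta_j)=\delta_{ij}$; thus $\{c_1^\zeta,\dots,c_r^\zeta\}$ restricts to the dual basis on $\ker(\pp_{|V_\zeta})$, and $(\imath/2)^r\bigwedge_{i=1}^r c_i^\zeta\wedge\bar c_i^\zeta$ restricts to the standard real volume form on this subspace. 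Choosing a complex basis of $V_\zeta$ adapted to the decomposition $V_\zeta\simeq\ker(\pp_{|V_\zeta})\oplus W$ (where $\pp_{|W}:W\to H_\zeta$ is an isomorphism) and evaluating the $(N,N)$-form $\Theta_\zeta$ on it, the only non-zero summand is the one in which $\pp^*\vartheta^{N-r}$ absorbs all arguments in $W\oplus\bar W$ and the $c^\zeta_i\wedge\bar c_i^\zeta$'s absorb all arguments in $\ker(\pp_{|V_\zeta})\oplus\overline{\ker(\pp_{|V_\zeta})}$; this yields a non-zero result.

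Next I treat independence from the choice of $c_i$. If $c'_i$ is another path from $\hat s_i$ to $T_\zeta(\hat s_i)$, then $\gamma_i:=c_i-c'_i$ has zero boundary, so it is an absolute cycle in $H_1(\hat M,\Z)$. As a linear form on $H^1(\hat M,\hat\Sig,\C)$ it therefore factors through $\pp$, and restricted to $V_\zeta$ one has $c_i^\zeta-c_i'^{\zeta}=\pp^*\alpha_i$ for some $\alpha_i\in H_\zeta^*$. Expanding
\[
\bigwedge_{i=1}^r(c_i^\zeta\wedge\bar c_i^\zeta)=\bigwedge_{i=1}^r\bigl((c_i'^{\zeta}+\pp^*\alpha_i)\wedge(\bar c_i'^{\zeta}+\pp^*\bar\alpha_i)\bigr),
\]
every cross term other than $\bigwedge(c_i'^{\zeta}\wedge\bar c_i'^{\zeta})$ contains at least one factor of the form $\pp^*\alpha_i$ or $\pp^*\bar\alpha_i$. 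Wedged with $\pp^*\vartheta^{N-r}$, each such cross term is $\pp^*$ of a form on $H_\zeta$ of real degree $>2(N-r)=\dim_\R H_\zeta$, hence vanishes. So only the desired term survives and $\Theta_\zeta$ is unchanged.

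Finally, independence from the choice of $\hat s_i$: since $k_i\in d\Z$, the fibre $\pi^{-1}(s_i)$ has cardinality $d$ and equals $\{T_\zeta^k(\hat s_i)\}_{k=0}^{d-1}$. Any other choice $\hat s_i'=T_\zeta^{k_i}(\hat s_i)$ admits the path $c_i':=T_\zeta^{k_i}(c_i)$ from $\hat s_i'$ to $T_\zeta(\hat s_i')$; by the previous step the value of $\Theta_\zeta$ does not depend on which such $c_i'$ we pick. For this particular $c_i'$ and any $\eta\in V_\zeta$,
\[
c_i'^{\zeta}(\eta)=\eta(T_\zeta^{k_i}(c_i))=(T_\zeta^{k_i*}\eta)(c_i)=\zeta^{k_i}\,c_i^\zeta(\eta),
\]
so $c_i'^{\zeta}\wedge\bar c_i'^{\zeta}=\zeta^{k_i}\bar\zeta^{k_i}\,c_i^\zeta\wedge\bar c_i^\zeta=c_i^\zeta\wedge\bar c_i^\zeta$, and $\Theta_\zeta$ is unchanged. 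The main technical point is the bookkeeping in the first paragraph (checking non-vanishing via the bigrading adapted to the exact sequence); once this is in place, the independence statements reduce to the degree argument using $\dim_\R H_\zeta=2(N-r)$ and to the identity $|\zeta|=1$.
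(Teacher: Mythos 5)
Your proof is correct and follows essentially the same route as the paper: the same three inputs (Lemma~\ref{lm:proj:coh:surj}, Proposition~\ref{prop:ker:project:coh}, Corollary~\ref{cor:inter:form:symp}) drive the non-vanishing, and the independence of $c_i$ and of $\hat{s}_i$ is handled by the same observations (an absolute cycle is killed after wedging with the top power of $\pp^*\vartheta$, and $|\zeta^{k}|=1$). The only cosmetic difference is that you organize the computation via the splitting $V_\zeta=\ker(\pp_{|V_\zeta})\oplus W$ and a degree count on $H_\zeta$, whereas the paper expands everything in an explicit dual basis $(b_1^\zeta,\dots,b_{N-r}^\zeta,c_1^\zeta,\dots,c_r^\zeta)$ of $V_\zeta^*$.
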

\begin{proof}
By Lemma~\ref{lm:proj:coh:surj} and Proposition~\ref{prop:ker:project:coh}, we have
$$
\dim_\C H_\zeta=\dim_\C\pp(V_\zeta)=N-r.
$$
Therefore, there are $(N-r)$ cycles $b_1,\dots,b_{N-r} \in H_1(\hat{M},\Z)$  such that the map $H_\zeta \ra \C^{N-r}, \eta \mapsto (\eta(b_1),\dots,\eta(b_{N-r}))$ is an isomorphism.
This means that $\{b^\zeta_1,\dots,b^\zeta_{N-r}\}$ form  a basis of $H_\zeta^*$.
Hence for any $c\in H_1(\hat{M},\Z)$, $c^\zeta$ is a linear combination of $b^\zeta_1,\dots,b^\zeta_{N-r}$.
Note that since $\{b^\zeta_1,\dots,b^\zeta_{N-r}\}$ are independent in $H_\zeta^*$, they are also independent in $V_\zeta^*$.

By Corollary~\ref{cor:inter:form:symp}, $\vartheta^{N-r}$ is a volume form on $H_\zeta$. Therefore, there is a nonzero constant $\lambda \in \R^*$ such that
$$
\pp^*\vartheta^{N-r}=\lambda\cdot(\frac{\imath}{2})^{N-r} b^\zeta_1\wedge\bar{b}^\zeta_1\wedge\dots\wedge b^\zeta_{N-r}\wedge\bar{b}^\zeta_{N-r}.
$$
By Proposition~\ref{prop:ker:project:coh}, we know that there is a basis $(\eta_1,\dots,\eta_r)$ of $\ker\pp \cap V_\zeta$ such that $\eta_j(c_i)=(\zeta-1)\delta_{ij}$. This means that $(c^\zeta_1,\dots,c^\zeta_r)$ is independent on $V_\zeta^*$. Moreover, $c^\zeta_i$ does not belong to $\mathrm{Span}(b_1^\zeta,\dots,b^\zeta_{N-r})$ since we have $\eta_i(c_i)\neq 0$, while $\eta_i(b_1)=\dots=\eta_i(b_{N-r})=0$.
Therefore $(b_1^\zeta,\dots,b^\zeta_{N-r},c^\zeta_1,\dots,c_r^\zeta)$ is a basis of $V_\zeta^*$. Thus we have

\begin{eqnarray*}
  \Theta_\zeta &=& \frac{1}{(N-r)!}(\frac{\imath}{2})^r \pp^*\vartheta^{N-r} \wedge c^\zeta_1\wedge\bar{c}^\zeta_1\wedge\dots\wedge c^\zeta_r\wedge\bar{c}^\zeta_r\\
       &=& \frac{\lambda }{(N-r)!}(\frac{\imath}{2})^N b^\zeta_1\wedge\bar{b}^\zeta_1\wedge\dots\wedge b^\zeta_{N-r}\wedge \bar{b}^\zeta_{N-r} \wedge c^\zeta_1\wedge\bar{c}^\zeta_1\wedge\dots\wedge c^\zeta_r\wedge\bar{c}^\zeta_r.
\end{eqnarray*}
In particular, $\Theta_\zeta$ is a volume form on $V_\zeta$.

We now show that $\Theta_\zeta$ does not depend on the choice of $c_i$.
Let $c'_i$ be another path from $\hat{s}_{i}$ to $T_\zeta(\hat{s}_{i})$.
Then we can write $c_i=c'_i+a_i$, for some $a_i\in H_1(\hat{M},\Z)$.
Restricting to $V_\zeta$ gives $c_i^\zeta={c'}^\zeta_i+a^\zeta_i$.
Since $a^\zeta_i$ must be a linear combination of $(b_1^\zeta,\dots,b_{N-r}^\zeta)$, we get

\begin{eqnarray*}
  \Theta_\zeta &=& \frac{1}{(N-r)!}(\frac{\imath}{2})^r  \pp^*\vartheta^{N-r}\wedge({c'_i}^\zeta+a^\zeta_i)\wedge (\bar{c}'_i{}^{\zeta}+\bar{a}^\zeta_i)\wedge \left(\bigwedge_{j\neq i} c^\zeta_j\wedge \bar{c}^\zeta_j\right)  \\
   &=& \frac{1}{(N-r)!}(\frac{\imath}{2})^r \pp^*\vartheta^{N-r} \wedge {c'_i}^\zeta\wedge\bar{c}'_i{}^\zeta\wedge \left(\bigwedge_{j\neq i} c^\zeta_j\wedge \bar{c}^\zeta_j\right)  .
\end{eqnarray*}
Finally, if we replace $\hat{s}_{i}$ by another point $\hat{s}'_{i}$ in $\pi^{-1}(s_i)$, then there exists $k \in \{0,\dots, d-1\}$ such that $\hat{s}'_{i}=T_\zeta^k(\hat{s}_{i})$.
It follows that $c'_i:=T_\zeta^k(c_i)$ is a path from $\hat{s}'_{i}$ to $T_\zeta(\hat{s}'_{i})$.
Observe that ${c'_i}^\zeta=\zeta^k c^\zeta_i$.
Thus
$$
{c'_i}^\zeta\wedge \bar{c}'_i{}^\zeta=c^\zeta_i\wedge \bar{c}^\zeta_i,
$$
and the lemma follows.
\end{proof}

\begin{Proposition}\label{prop:vol:form:def}
For any $d>1$, the $(N,N)$-form $\Theta:=\frac{\Theta_\zeta}{|1-\zeta|^{2r}}$ on $V_\zeta$ gives rise to a well defined volume form $d\vol$ on $\strate$ that is parallel with respect to the affine manifold structure.
Moreover, $d\vol$ does not depend on the choice of $\zeta$.
\end{Proposition}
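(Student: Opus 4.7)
The plan is to split the proof into two steps: first, I would show that the $(N,N)$-form $\Theta$ is independent of the choice of primitive $d$-th root $\zeta$; second, I would use this together with the naturality of the construction to verify that $\Theta$ is invariant under the transition maps of the atlas from Corollary~\ref{cor:aff:mfd:structure}, so that it descends to a well-defined global form on $\strate$.

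For step one, I would first observe that for any two primitive $d$-th roots $\zeta$ and $\zeta'=\zeta^k$ with $\gcd(k,d)=1$, the multiplicativity of the construction in Section~\ref{sec:cyclic:cov} gives $T_{\zeta'}=T_\zeta^k$, and a short eigenspace argument (using that $T_\zeta$ has order $d$ and is diagonalizable, together with $\gcd(k,d)=1$) shows $V_{\zeta'}=V_\zeta$ as subspaces of $H^1(\hat M,\hat\Sig,\C)$. Thus $\Theta_\zeta$ and $\Theta_{\zeta'}$ live on the same vector space. The factor $\pp^*\vartheta^{N-r}$ manifestly does not involve $\zeta$, so any difference between them must come from the cycles. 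I would take $c'_i:=T_\zeta^{k-1}(c_i)\ast\cdots\ast T_\zeta(c_i)\ast c_i$ as a path from $\hat s_{i}$ to $T_{\zeta'}(\hat s_{i})$; the identity $\eta(T_\zeta\cdot)=\zeta\eta(\cdot)$ valid for $\eta\in V_\zeta$ yields $\eta(c'_i)=\frac{1-\zeta'}{1-\zeta}\eta(c_i)$, whence
$$
c'_i{}^\zeta\wedge\bar{c}'_i{}^\zeta=\left|\frac{1-\zeta'}{1-\zeta}\right|^2 c_i^\zeta\wedge\bar c_i^\zeta.
$$
Taking the product over $i=1,\dots,r$ gives $\Theta_{\zeta'}=|(1-\zeta')/(1-\zeta)|^{2r}\Theta_\zeta$, so the normalized form $\Theta=\Theta_\zeta/|1-\zeta|^{2r}$ is $\zeta$-independent.

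For step two, I would invoke the description from Section~\ref{sec:marking} of how a transition between two period mappings arises: there is a homeomorphism $\hat h:(\hat M,\hat\Sig)\to(\hat N,\hat\Pi)$ compatible with the cyclic covering structure, and by Lemma~\ref{lm:marking:change}, $\hat h^*$ restricts to an isomorphism $V'_\zeta\to V_\zeta$. Since $\hat h$ is orientation-preserving, $\hat h^*$ preserves the intersection form on absolute cohomology and hence preserves $\pp^*\vartheta^{N-r}$. Because $\hat h$ intertwines the deck transformations up to the change of generator made precise by Lemma~\ref{lm:spec:marking:change}, the cycles $c'_i$ for $(V'_\zeta,T'_\zeta)$ pull back under $\hat h^{-1}$ to legitimate cycles computing $\Theta_{\zeta^*}$ on $V_{\zeta^*}=V_\zeta$ for some primitive root $\zeta^*$. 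By Lemma~\ref{lm:vol:form:Vzeta} the answer is independent of the precise choice of cycles, and by step one $\Theta_{\zeta^*}=\Theta_\zeta$, so $\hat h^*\Theta=\Theta$. It follows that $\Theta$ glues to a well-defined $(N,N)$-form $d\vol$ on $\strate$, which is parallel because $\Theta$ has constant coefficients on each chart and the transitions are affine.

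The main obstacle I anticipate is keeping the several roles of $\zeta$ disentangled: it labels the deck transformation $T_\zeta$, selects the eigenspace $V_\zeta$, and appears in the normalizing factor. Once one verifies that $V_\zeta$ is actually the same subspace for every primitive choice of $\zeta$, the factor $|1-\zeta|^{-2r}$ is precisely what is needed to cancel the scaling picked up when the $T_\zeta$-equivariant cycles $c_i$ are replaced by their $T_{\zeta'}$-analogues, and both assertions of the proposition flow from the same computation.
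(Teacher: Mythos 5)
Your proposal is correct and takes essentially the same route as the paper: the $\zeta$-independence computation via the concatenated path $T_\zeta^{k-1}(c_i)*\cdots*T_\zeta(c_i)*c_i$, giving the factor $\bigl|\tfrac{1-\zeta'}{1-\zeta}\bigr|^{2r}$, is exactly the paper's, and the well-definedness step rests, as in the paper, on the lift $\hat h$ preserving the intersection form, commuting with $\pp$, and carrying the cycles $c_i$ to admissible cycles for the target data. The only (harmless) difference is that the paper notes $T'_\zeta=\hat h\circ T_\zeta\circ \hat h^{-1}$ holds exactly, so the transported paths are already legitimate cycles for the \emph{same} primitive root $\zeta$ and the detour through a second root $\zeta^*$ and step one is unnecessary in step two.
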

\begin{proof}
Any point $(X_0,q_0)$ in $\strate$ has a neighborhood that can be identified with an open subset of $V_\zeta$ by the local chart defined in Proposition~\ref{prop:eigen:sp:loc:coord}.
Since $\Theta$ is a volume form on $V_\zeta$, by Lemma~\ref{lm:vol:form:Vzeta}, it induces a volume form $d\vol$ on a neighborhood of $(X_0,q_0)$. It remains to show that $d\vol$ is invariant under the coordinate  changes of $\strate$.

Recall that the transition maps of local charts defined by the maps $\Xi$ (see Proposition~\ref{prop:eigen:sp:loc:coord}) arise from homotopy classes of homeomorphisms of the pair $(M,\Sig)$.
Fix a homeomorphism $f_0: (M,\Sig) \ra (X_0,Z(q_0))$.
Let $h: (M,\Sig) \ra (M,\Sig)$ be a homeomorphism of $M$ that is identity on the set $\Sig$. We can suppose that $h$ fixes the base point of $\pi_1(M',*)$.
Let $f'_0:=f_0\circ h^{-1}: (M,\Sig) \ra (X_0,Z(q_0))$, and $\Gamma'=h_*(\Gamma)=\ker(\veps_0\circ f'_{0*})$.

Let $\pi': (\hat{N},\hat{\Pi}) \ra (M,\Sig)$ be the cyclic coverings associated with $\Gamma'$, and $T'_\zeta: (\hat{N},\hat{\Pi}) \ra (\hat{N},\hat{\Pi})$ be the covering automorphism of $\pi'$ associated with $\zeta$.
By construction, $h$ lifts to a homeomorphism $\hat{h}:  (\hat{M},\hat{\Sig}) \ra (\hat{N},\hat{\Pi})$ such that $h\circ\pi =\pi'\circ\hat{h}$, and  $T'_\zeta=\hat{h}\circ T_\zeta \circ \hat{h}^{-1}$.
It follows that $\hat{h}^*$ restricts to an isomorphism from $V'_\zeta=\ker(T'_\zeta -\zeta\Id) \subset H^1(\hat{N},\hat{\Pi},\C)$ onto $V_\zeta$.

Let $\vartheta'$  denote the symplectic forms on $H^1(\hat{N},\C)$ which is induced by the intersection form on $H_1(\hat{N},\Z)$.
Since the intersection forms on  $H_1(\hat{N},\Z)$ and $H_1(\hat{M},\Z)$ are equivariant under $\hat{h}_*$, we have $\hat{h}_*\vartheta=\vartheta'$.
Note that $\hat{h}^*$ commutes with the projection $\pp$ (since $\hat{h}_*$ sends $H_1(\hat{M},\Z)$ bijectively onto $H_1(\hat{N},\Z)$). Therefore, we have
$$
\hat{h}_*(\pp^*{\vartheta}^{N-r})=\pp^*{\vartheta'}^{N-r}.
$$
Let ${\hat{s}_{i}}':=\hat{h}(\hat{s}_{i})$, for $i=1,\dots,r$.
Since $T'_\zeta=\hat{h}\circ T_\zeta \circ \hat{h}^{-1}$, $c'_i:=\hat{h}(c_i)$ is a path from ${\hat{s}_{i}}'$ to $T'_\zeta({\hat{s}_{i}}')$.
It is straightforward to check that $\hat{h}_*\Theta_\zeta=\Theta'_\zeta$.  Therefore, $\Theta:=\frac{\Theta_\zeta}{|1-\zeta|^{2r}}$ gives a well defined volume form $d\vol$ on $\strate$.
Since $d\vol$ is given by constant volume forms in the local charts by $\Xi$, it is  parallel with respect to the  affine complex orbifold structure of $\strate$.

\medskip

It remains to show that $d\vol$ is independent of the choice of the primitive $d$-th root $\zeta$ of unity.
Let $\zeta'$ be another primitive $d$-th root of unity. There exists $k\in \N$ such that $\zeta'=\zeta^k$.
The covering automorphism of $\pi: (\hat{M},\hat{\Sig}) \ra (M,\Sig)$ associated with $\zeta'$ is $T_{\zeta'}:=T^k_\zeta$, and the $\zeta'$-eigenspace of $T_{\zeta'}$ in $H^1(\hat{M},\hat{\Sig},\C)$ is precisely $V_\zeta$.

Let $c'_i$ be a path from $\hat{s}_{i}$ to $T_{\zeta'}(\hat{s}_{i})=T^k_\zeta(\hat{s}_{i})$.
The volume form associated with $\zeta'$ is then defined by
$$
\Theta_{\zeta'}=\frac{1}{(N-r)!}(\frac{\imath}{2})^r \pp^*\vartheta^{N-r}\wedge \left(\bigwedge_{i=1}^r {c'_i}^{\zeta}\wedge \bar{c}'_i{}^{\zeta}\right).
$$
Since  $T_\zeta^{k-1}(c_i)*\dots*T_\zeta(c_i)*c_i$ is also a path from $\hat{s}_i$ to $T^k_\zeta(\hat{s}_i)$, there exists $a\in H_1(\hat{M},\Z)$ such that
$$
{c'_i}^{\zeta}=(1+\dots+\zeta^{k-1}) c_i^\zeta + a^\zeta=\frac{1-\zeta'}{1-\zeta}c^\zeta+a^\zeta.
$$
Therefore,
\begin{eqnarray*}
\Theta_{\zeta'} & = & \frac{1}{(N-r)!}\frac{|1-\zeta'|^{2r}}{|1-\zeta|^{2r}} (\frac{\imath}{2})^r \pp^*\vartheta^{N-r}\wedge \left( \bigwedge_{i=1}^r c_i^\zeta\wedge \bar{c}_i^\zeta\right)\\
       & = & \frac{|1-{\zeta'}|^{2r}}{|1-\zeta|^{2r}} \Theta_\zeta
\end{eqnarray*}
that is
$$
\frac{\Theta_{\zeta'}}{|1-\zeta'|^{2r}}=\frac{\Theta_{\zeta}}{|1-\zeta|^{2r}}.
$$
This implies that the volume form $d\vol$  does not depend on the choice of $\zeta$.
\end{proof}

\begin{Remark}\label{rk:def:vol:ex:seq}
Having in mind the exact sequence \eqref{eq:coh:ex:seq:eig:sp:red}, we have an alternative way to define the volume form $\Theta$ as follows: we define a Hermitian metric on $H^0(\hSig,\C)_\zeta$ by declaring the family $\{\eta_1,\dots,\eta_r\}$ in Proposition~\ref{prop:ker:project:coh} is an orthonormal basis with respect to this metric. We remark that this metric does not depend on the choice of $\hat{s}_i$ in $\pi^{-1}(\{s_i\})$. This is because a different choice of $\hat{s}_i$ results in multiplying $\eta_i$ by a $d$-th root of unity.
By the same reason, it does  not depend on the choice of $\zeta$ either.
Let $\theta$ be the volume form on $H^0(\hSig,\C)_\zeta$ associated to this metric.
The volume forms $\theta$ on $H^0(\hSig,\C)_\zeta$ and $\frac{1}{(N-r)!}\vartheta^{N-r}$ on $H_\zeta$ then induce a volume form on $V_\zeta$ via the exact sequence \eqref{eq:coh:ex:seq:eig:sp:red}.
It is straightforward to check that this volume form coincides with $\Theta$.
\footnote{The author thanks the anonymous referee for suggesting this definition.}
\end{Remark}

\subsection{Volume form in the case Abelian differentials}\label{sec:vol:form:d:1}
In the case $d=1$, we have $(\hat{M},\hat{\Sig})=(M,\Sig)$, $V_\zeta=H^1(M,\Sig,\C), H_\zeta = H^1(M,\C)$. For $i=1,\dots,n-1$, let $c_i$ be a path from $s_i$ to $s_n$. Then $(c_1,\dots,c_{n-1})$ is an independent family  in $H_1(M,\Sig,\C) \simeq H^1(M,\Sig,C)^*$. The following proposition follows from the same arguments as Proposition~\ref{prop:vol:form:def}.
\begin{Proposition}\label{prop:vol:form:abel:diff}
 For $d=1$, the form
 $$
 \Theta=\frac{1}{(2g)!}(\frac{\imath}{2})^{n-1}\pp^*\vartheta^{2g} \wedge \left(\bigwedge_{i=1}^{n-1}c_i\wedge \bar{c}_i\right)
 $$
 on $H^1(M,\Sig,\C)$ gives rise to a well defined volume form $d\vol$ on $\Omega \Mcal_{g,n}(\kappa)$.
\end{Proposition}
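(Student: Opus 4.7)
The proof will closely parallel the argument of Proposition~\ref{prop:vol:form:def}, with simplifications coming from the fact that $\zeta=1$, $(\hat{M},\hat{\Sig})=(M,\Sig)$, and no $\zeta$-eigenspace restriction is needed (so there is no correction factor $1/|1-\zeta|^{2r}$).

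First I would verify that $\Theta$ is a genuine volume form on $H^1(M,\Sig,\C)$. Since $M$ is closed of genus $g$, we have $\dim_\C H^1(M,\C)=2g$ and the intersection form is non-degenerate, so $\vartheta^{2g}$ is a (non-zero) top form on $H^1(M,\C)$. Pick a symplectic basis $(a_1,b_1,\dots,a_g,b_g)$ of $H_1(M,\Z)$; viewed in $H^1(M,\Sig,\C)^*$, the restrictions of these $2g$ cycles together with $(c_1,\dots,c_{n-1})$ form a basis of $H^1(M,\Sig,\C)^*$ (the long exact sequence of the pair $(M,\Sig)$ shows that the classes $[c_i]$ project to a basis of $H_1(M,\Sig,\Z)/H_1(M,\Z)$, which has rank $n-1$). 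Writing $\pp^*\vartheta^{2g}=\lambda\cdot(\tfrac{\imath}{2})^{2g} a_1\wedge\bar a_1\wedge\dots\wedge b_g\wedge\bar b_g$ for some $\lambda\in\R^*$, one obtains
\[
\Theta=\frac{\lambda}{(2g)!}\left(\frac{\imath}{2}\right)^{2g+n-1}a_1\wedge\bar a_1\wedge\dots\wedge b_g\wedge\bar b_g\wedge c_1\wedge\bar c_1\wedge\dots\wedge c_{n-1}\wedge\bar c_{n-1},
\]
which is a non-zero top form on $H^1(M,\Sig,\C)$.

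Next I would show independence of the choice of the $c_i$. If $c'_i$ is another path from $s_i$ to $s_n$, then $c'_i-c_i=:a_i\in H_1(M,\Z)$. Since any element of $H_1(M,\Z)\subset H_1(M,\Sig,\Z)$, regarded as a linear form on $H^1(M,\Sig,\C)$, factors through $\pp$, we have $a_i=\pp^*\tilde a_i$ for some $\tilde a_i$ on $H^1(M,\C)$. As $\vartheta^{2g}$ is already of top degree on $H^1(M,\C)$, $\vartheta^{2g}\wedge\tilde a_i=0$, hence $\pp^*\vartheta^{2g}\wedge a_i=0$. Expanding $(c_i+a_i)\wedge\overline{(c_i+a_i)}$ and wedging with $\pp^*\vartheta^{2g}$, all cross-terms and the $a_i\wedge\bar a_i$ term vanish, leaving $\Theta$ unchanged.

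Finally I would check invariance under changes of marking, which provides the transition maps of the affine atlas. As in Proposition~\ref{prop:vol:form:def}, these maps are induced by homeomorphisms $h:(M,\Sig)\to(M,\Sig)$ fixing each $s_i$ (so that the numbering of zeros is preserved). The intersection form on $H_1(M,\Z)$ is preserved by $h_*$, so $h^*\vartheta=\vartheta$, and since $h^*$ commutes with $\pp$, $h^*(\pp^*\vartheta^{2g})=\pp^*\vartheta^{2g}$. Moreover, $h(c_i)$ is a path from $s_i$ to $s_n$, so $h_*(c_i)=c_i+a_i$ with $a_i\in H_1(M,\Z)$, and the previous step shows that this substitution does not change $\Theta$. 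Thus $\Theta$ descends to a well-defined, parallel volume form $d\vol$ on $\Omega\Mcal_{g,n}(\kappa)$.

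There is no real obstacle here beyond bookkeeping, since the hard part of Proposition~\ref{prop:vol:form:def} (the invariance under the choice of $\zeta$, and the subtle identification of $\ker\pp\cap V_\zeta$) simply does not appear when $d=1$: one has $V_\zeta=H^1(M,\Sig,\C)$ tautologically, $H_\zeta=H^1(M,\C)$, and $\ker\pp$ is spanned by the duals of $c_1,\dots,c_{n-1}$ directly from the long exact sequence of $(M,\Sig)$.
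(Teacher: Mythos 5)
Your proof is correct and follows essentially the same route as the paper, which itself only remarks that Proposition~\ref{prop:vol:form:abel:diff} "follows from the same arguments as Proposition~\ref{prop:vol:form:def}"; you have simply written out those arguments (non-degeneracy of $\vartheta^{2g}$ on $H^1(M,\C)$, annihilation of $\pp^*\vartheta^{2g}$ by any class factoring through $\pp$, and equivariance under marking changes) in the $d=1$ setting. No discrepancies to report.
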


\subsection{Volume form on the projectivization}\label{sec:vol:form:project}
We now give the definition of the volume form $d\vol_1$ on $\Pb\strate$. Let $V_\zeta^+$ denote the set $\{\eta \in V_\zeta, \; (\eta,\eta) >0\}$. If $\eta=\Xi(X,q)$ for some $(X,q) \in \strate$, where $\Xi$ is the map defined in Proposition~\ref{prop:eigen:sp:loc:coord}, then we have
$$
(\eta,\eta)=\frac{\imath}{2}\int_{\hat{X}}\hat{\omega}\wedge\ol{\hat{\omega}} >0.
$$
Thus $\eta\in V_\zeta^+$. Let $\proj: V_\zeta \ra \Pb V_\zeta$ be the projectivization map of $V_\zeta$, and $\Pb V_\zeta^+$ be the image of $V_\zeta^+$ under $\proj$ .
By definition, $\proj\circ \Xi$ maps an open neighborhood of $(X,q)$ onto an open subset of $\Pb V_\zeta^+$. Consequently, $\Xi$ induces a biholomorphic map $\hat{\Xi}$ from a neighborhood of $\C^*\cdot(X,q)$ in $\Pb\strate$ onto an open subset of $\Pb V_\zeta^+$. We will use $\hat{\Xi}$  as local charts for $\Pb\strate$.


Let $\mu_\Theta$ denote the measure on $V_\zeta$ which is defined by $\Theta$. Namely, $\mu_\Theta(U)=\int_U\Theta$, for all open subset $U$ of $V_\zeta$.
The measure $\mu_\Theta$ induces a measure $\mu^1_\Theta$ on $\Pb V_\zeta^+$ as follows: given an open subset $B$ of $\Pb V_\zeta^+$,
let $C(B)$ be the cone above $B$ in $V_\zeta$, that is $C(B)=\proj^{-1}(B)$.
Let
$$
C_1(B)=\{\eta\in C(B), \; 0< (\eta,\eta) \leq 1\} \subset V^+_\zeta.
$$
We then define
\begin{equation}\label{eq:def:vol:proj}
\mu^1_\Theta(B):=\frac{1}{d}\cdot \mu_\Theta(C_1(B)).
\end{equation}
The factor $\frac{1}{d}$ is introduced to take into account the fact for each $(X,q) \in \strate$, there are $d$ holomorphic $1$-forms $\hat{\omega}$ on $\hat{X}$ such that  $\varpi^*q=\hat{\omega}^d$.
By a direct computation, one can see that $\mu^1_\Theta$ is actually given by a volume form $\Theta_1$ on $\Pb V^+_\zeta$.
Since $\Pb\strate$ are locally identified with $\Pb V_\zeta^+$, $\Theta_1$ provides us with a volume form $d\vol_1$ on $\Pb \strate$.

\subsection{Comparison with Masur-Veech volumes}\label{sec:vol:form:compare}
For $d\in \{1,2,3,4,6\}$, there exists another natural volume form on $\strate$  that we now  describe.
Recall that if $A$ is a $\Z$-module, then $H^1(\hat{M},\hat{\Sig},A)$ is the space of morphisms of $\Z$-modules $\eta: H_1(\hat{M},\hat{\Sig},\Z) \ra A$.
Let
$$
\Lambda_\zeta= \left\{
\begin{array}{ll}
 V_\zeta\cap H^1(\hat{M},\hat{\Sig},\Z\oplus \imath\Z) & \text{ if } d\in \{1,2,4\},\\
 V_\zeta\cap H^1(\hat{M},\hat{\Sig},\Z[\zeta]) & \text{ if } d\in \{3,6\}.
\end{array}
\right.
$$
Since $V_\zeta$ is defined over $\Q(\zeta)$, $\Lambda_\zeta$ is a lattice of $V_\zeta$. There is unique volume form on $V_\zeta$ proportional to the Lebesgue measure such that the co-volume of $\Lambda_\zeta$ is $1$.
Since the transition maps of the local charts by period mappings preserve $\Lambda_\zeta$, this  volume form gives a well defined volume form on $\strate$, that will be referred to as the {\em Masur-Veech measure} and denoted by $d\vol^*$.  Consequently, one can define a volume form $d\vol^*_1$ on $\pstrate$ in the same way as $d\vol_1$.
Note that our normalization for $d\vol^*$ differs slightly from the normalizations in \cite{AEZ_ann_ENS} or \cite{Engel-I}.
The following proposition follows immediately from the definition of $d\vol$ and $d\vol^*$.
\begin{Proposition}\label{prop:ratio:v:forms:const}
For each stratum $\strate$ with $d\in \{1,2,3,4,6\}$, there is a real constant $\lambda$ such that $d\vol=\lambda d\vol^*$.
\end{Proposition}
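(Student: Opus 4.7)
The plan is to exploit the fact that both $d\vol$ and $d\vol^*$ are parallel volume forms with respect to the same affine complex manifold structure on $\strate$, and then to argue that any two such volume forms necessarily differ by a (real) constant on each connected component.

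First I would recall that $d\vol$ is parallel by Proposition~\ref{prop:vol:form:def}: in any period chart $\Xi$, it is given by the constant-coefficient $(N,N)$-form $\Theta/|1-\zeta|^{2r}$ on $V_\zeta$. For $d\vol^*$ the same holds by construction: the lattice $\Lambda_\zeta\subset V_\zeta$ is sent to the analogous lattice in a neighbouring chart by the transition maps (these maps arise from homeomorphisms of $(M,\Sig)$, hence induce, via $\hat{h}^*$, isomorphisms that preserve both $H^1(\hat{M},\hat{\Sig},\Z\oplus\imath\Z)$ and $H^1(\hat{M},\hat{\Sig},\Z[\zeta])$ and restrict to isomorphisms $V'_\zeta\to V_\zeta$ by Lemma~\ref{lm:marking:change}, with integrality preserved). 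Hence the unique translation-invariant $(N,N)$-form of co-volume $1$ on $\Lambda_\zeta$ is preserved by transition maps, so $d\vol^*$ is also parallel.

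Next I would observe that on the complex vector space $V_\zeta$ the space of constant real $(N,N)$-forms is one-dimensional over $\R$. Therefore, in a single period chart $\Xi:\Vcal\to V_\zeta$, there exists a unique real number $\lambda(\Vcal)$ with $\Theta/|1-\zeta|^{2r}=\lambda(\Vcal)\cdot\Theta^*$, where $\Theta^*$ denotes the Lebesgue form normalising $\Lambda_\zeta$. Positivity of the ratio follows from both forms being positive volume forms on the underlying real $2N$-manifold; reality is automatic.

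Finally I would show that $\lambda(\Vcal)$ is locally constant on $\strate$, hence constant on each connected component. Given two overlapping charts $\Xi_1,\Xi_2$ with transition map $\Psi=\Xi_2\circ\Xi_1^{-1}$, both $\Theta$ and $\Theta^*$ pull back to themselves under $\Psi$ (this is precisely the content of Proposition~\ref{prop:vol:form:def} for $\Theta$, and of the lattice preservation discussed above for $\Theta^*$). Therefore the ratio $\lambda$ computed in the two charts agrees, and since $\strate$ (more precisely, the connected component we work on) is connected, $\lambda$ is a well-defined global constant. This gives the desired equality $d\vol=\lambda\, d\vol^*$.

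The only mildly delicate point is the lattice preservation under transition maps for $d\in\{3,6\}$, where one must check that the isomorphism $\hat{h}^*:V'_\zeta\to V_\zeta$ of Lemma~\ref{lm:marking:change} sends $V'_\zeta\cap H^1(\hat{N},\hat{\Pi},\Z[\zeta])$ bijectively to $V_\zeta\cap H^1(\hat{M},\hat{\Sig},\Z[\zeta])$; but $\hat{h}$ is a homeomorphism of pairs, so $\hat{h}^*$ is defined over $\Z$ and commutes with multiplication by $\zeta$, which is exactly what is required.
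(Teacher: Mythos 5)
Your proof is correct and follows essentially the same route as the paper, which simply observes that the proposition ``follows immediately from the definition of $d\vol$ and $d\vol^*$'': both are parallel (constant in the period charts), the space of constant $(N,N)$-forms on $V_\zeta$ is one-dimensional over $\R$, and connectedness of the component makes the ratio a global constant. One small correction: your parenthetical claim that the ratio is positive is false --- the paper's own Proposition~\ref{prop:compare:v:form:d:1} gives $d\vol/d\vol^*=(-1)^g/2^{2g}$, which is negative for odd $g$ --- but this does not affect the statement, which only asserts a real constant.
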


In the remainder of this section, we investigate the possible values of the constant $\lambda$.

\begin{Proposition}\label{prop:compare:v:form:d:1}
For any stratum $\strateabdiff$ of Abelian differentials in genus $g$, we have
\begin{equation*}
\frac{d\vol}{d\vol^*}=\frac{(-1)^g}{2^{2g}}.
\end{equation*}
\end{Proposition}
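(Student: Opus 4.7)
The plan is to evaluate both $d\vol$ and $d\vol^*$ on a single convenient basis of $V_\zeta=H^1(M,\Sig,\C)$ and read off the ratio directly. Pick a standard symplectic basis $(a_1,b_1,\dots,a_g,b_g)$ of $H_1(M,\Z)$ together with paths $c_1,\dots,c_{n-1}$ from $s_i$ to $s_n$; these form a $\Z$-basis of $H_1(M,\Sig,\Z)$, yielding complex linear coordinates $A_j(\eta)=\eta(a_j)$, $B_j(\eta)=\eta(b_j)$, $C_i(\eta)=\eta(c_i)$ on $V_\zeta$. In these coordinates $H^1(M,\Sig,\Z\oplus\imath\Z)$ is identified with the lattice $\Z^{2N}$ (where $N=2g+n-1$) in the underlying real coordinates, so using $du\wedge du'=\tfrac{\imath}{2}dz\wedge d\bar z$ for $z=u+\imath u'$ one obtains
\begin{equation*}
d\vol^{*} = \left(\tfrac{\imath}{2}\right)^{N}\bigwedge_{j=1}^{g}(dA_j\wedge d\bar A_j\wedge dB_j\wedge d\bar B_j)\wedge\bigwedge_{i=1}^{n-1}dC_i\wedge d\bar C_i.
\end{equation*}

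Next I would rewrite $\vartheta$ in these coordinates. The definition \eqref{eq:inter:form:def} exhibits the Hermitian form as the $(1,1)$-tensor $h=\tfrac{\imath}{2}\sum_{j}(dA_j\otimes d\bar B_j-dB_j\otimes d\bar A_j)$ on $H^1(M,\C)$, and its imaginary part $\tfrac{1}{2\imath}(h-\bar h)$, automatically antisymmetric because $h$ is Hermitian, is the real $(1,1)$-form
\begin{equation*}
\vartheta \;=\; \tfrac{1}{4}\sum_{j=1}^{g}\omega_j,\qquad \omega_j:=dA_j\wedge d\bar B_j-dB_j\wedge d\bar A_j.
\end{equation*}
Distinct $\omega_j$ commute (disjoint sets of differentials) and each satisfies $\omega_j^{3}=0$; a short reordering of the four factors gives $\omega_j^{2}=2\,dA_j\wedge d\bar A_j\wedge dB_j\wedge d\bar B_j$. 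In the multinomial expansion of $\bigl(\sum_j\omega_j\bigr)^{2g}$ only the term with $k_1=\cdots=k_g=2$ survives, so
\begin{equation*}
\vartheta^{2g} = \frac{1}{4^{2g}}\cdot\frac{(2g)!}{(2!)^{g}}\prod_{j=1}^{g}\omega_j^{2} = \frac{(2g)!}{4^{2g}}\bigwedge_{j=1}^{g} dA_j\wedge d\bar A_j\wedge dB_j\wedge d\bar B_j.
\end{equation*}

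Substituting this into Proposition~\ref{prop:vol:form:abel:diff} and identifying $c_i\in V_\zeta^{*}$ with $dC_i$, the $(2g)!$ cancels and one obtains
\begin{equation*}
\Theta = \frac{1}{4^{2g}}\left(\tfrac{\imath}{2}\right)^{n-1}\bigwedge_{j=1}^{g}(dA_j\wedge d\bar A_j\wedge dB_j\wedge d\bar B_j)\wedge\bigwedge_{i=1}^{n-1}dC_i\wedge d\bar C_i,
\end{equation*}
and dividing by $d\vol^{*}$ gives $d\vol/d\vol^{*}=4^{-2g}\bigl(\tfrac{\imath}{2}\bigr)^{-2g}=2^{2g}/\bigl(4^{2g}(-1)^{g}\bigr)=(-1)^{g}/2^{2g}$, as claimed.

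The main bookkeeping obstacle will be sign-tracking in two places: first, justifying that the imaginary part of the Hermitian $(1,1)$-tensor $h$ equals $\tfrac{1}{2\imath}(h-\bar h)$ viewed as an element of $\wedge^{1,1}V_\zeta^{*}$ (using precisely that $h$ is Hermitian); second, correctly reordering $dA_j,d\bar B_j,dB_j,d\bar A_j$ when computing $\omega_j^{2}$. Once these are pinned down, the remainder is routine multilinear algebra with no analytic subtlety.
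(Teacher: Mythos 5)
Your proposal is correct and follows essentially the same route as the paper: both identify $H^1(M,\Sig,\C)$ with $\C^{2g+n-1}$ via a symplectic basis plus relative paths, express $\vartheta$ as $\tfrac14\sum_j(dz_j\wedge d\bar z_{g+j}-dz_{g+j}\wedge d\bar z_j)$, and expand $\vartheta^{2g}$ to compare with the Lebesgue normalization of $d\vol^*$. The only cosmetic difference is that you rewrite $d\vol^*$ in the complex wedge basis before taking the ratio, whereas the paper converts the complex wedge product back to $dx_1dy_1\cdots dx_{2g+n-1}dy_{2g+n-1}$; your sign checks ($\omega_j^2=2\,dA_j\wedge d\bar A_j\wedge dB_j\wedge d\bar B_j$ and the multinomial coefficient $(2g)!/(2!)^g$) are accurate.
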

\begin{proof}
In this case $(\hat{M},\hat{\Sig})=(M,\Sig)$, $V_\zeta=H^1(M,\Sig,\C)$, $N=2g+n-1$, and $\Lambda_\zeta=H^1(M,\Sig,\Z\oplus\imath\Z)$. Fix a symplectic basis $\{a_1,\dots,a_g,b_1,\dots,b_g\}$ of $H_1(M,\Z)$, and let $c_1,\dots,c_{n-1}$ be a family of paths joining $s_n$ to $s_1,\dots,s_{n-1}$ respectively. We identify $H^1(M,\Sig,\C)$ with $\C^{2g+n-1}$ by the mapping $\eta \mapsto (\eta(a_1),\dots,\eta(a_g),\eta(b_1),\dots,\eta(b_g),\eta(c_1),\dots,\eta(c_{n-1}))$. Let $(z_1,\dots,z_{2g+n-1})$ be the canonical complex coordinates of $\C^{2g+n-1}$. We will write $z_j=x_j+\imath y_j$, with $x_j,y_j \in \R$. We then have
$$
d\vol^*=dx_1dy_1\dots dx_{2g+n-1}dy_{2g+n-1}.
$$
In the coordinates $(z_1,\dots,z_{2g+n-1})$, the intersection form $(.,.)$ is given by
\begin{equation}\label{eq:herm:form:inter}
\Hb:=\frac{\imath}{2}\sum_{j=1}^g (dz_j\otimes d\bar{z}_{g+j} - dz_{g+j}\otimes d\bar{z}_j).
\end{equation}
It follows that
\begin{equation}\label{eq:sympl:form:inter}
\vartheta=\mathrm{Im}\Hb=\frac{1}{4}\sum_{j=1}^g (dz_j\wedge d\bar{z}_{g+j}-dz_{g+j}\wedge d\bar{z}_j).
\end{equation}
Hence
\begin{eqnarray*}
\frac{1}{(2g)!}(\frac{\imath}{2})^{n-1}\vartheta^{2g} \wedge \left(\bigwedge_{i=2g+1}^{2g+n-1}dz_i\wedge d\bar{z}_i\right)& = &  \frac{1}{4^{2g}} (\frac{\imath}{2})^{n-1}  \left(\bigwedge_{j=1}^g dz_jd\bar{z}_jdz_{g+j}d\bar{z}_{g+j}\right)\wedge \left(\bigwedge_{i=2g+1}^{2g+n-1}dz_id\bar{z}_i\right)\\
                             & = &  \frac{1}{4^{2g}} 2^{2g}(-1)^g dx_1dy_1\dots dx_{2g+n-1}dy_{2g+n-1}\\
                             & = & \frac{(-1)^g}{2^{2g}}d\vol^*.
\end{eqnarray*}
Thus we have $\displaystyle \frac{d\vol}{d\vol^*}=\frac{(-1)^g}{2^{2g}}$.
\end{proof}

\begin{Proposition}\label{prop:vol:const:rat}
For $d\in \{2,3,4,6\}$, we have
\begin{itemize}
\item if $d\in \{2,4\}$ then $\displaystyle \frac{d\vol}{d\vol^*} \in \Q$,

\item if $d \in \{3,6\}$ then $\displaystyle \frac{d\vol}{d\vol^*} \in (\sqrt{3})^r\cdot\Q$
\end{itemize}
\end{Proposition}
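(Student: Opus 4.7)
The plan is to evaluate the defining $(N,N)$-form $\Theta$ on a $\Z$-basis $(v_1,\dots,v_{2N})$ of the lattice $\Lambda_\zeta$: since by definition $d\vol^*$ assigns covolume $1$ to such a basis, the constant $\lambda$ from Proposition~\ref{prop:ratio:v:forms:const} is exactly $\Theta(v_1,\dots,v_{2N})$. I will analyze the two wedge factors
\[
\pp^*\vartheta^{N-r}\quad\text{and}\quad \bigwedge_{i=1}^r c_i^\zeta\wedge\bar c_i^\zeta
\]
separately on $\Lambda_\zeta$, and then combine with the scalar prefactor. A preliminary observation is that $|1-\zeta|^{2}$ equals $4,2,3,1$ for $d=2,4,3,6$ respectively, hence is always a positive integer.

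For the first factor, fix a symplectic basis $\{a_j,b_j\}$ of $H_1(\hat M,\Z)$. For any $v,w\in\Lambda_\zeta$ the values $v(a_j),w(b_j)$ lie in $\Z[\imath]$ when $d\in\{2,4\}$ and in $\Z[\zeta]$ when $d\in\{3,6\}$; in the latter case $\overline{\Z[\zeta]}=\Z[\zeta]$ since $\bar\zeta=\zeta^{-1}\in\Z[\zeta]$. Hence $(\pp v,\pp w)=\frac{\imath}{2}\sum_j\bigl(v(a_j)\overline{w(b_j)}-v(b_j)\overline{w(a_j)}\bigr)$ lies in $\frac{\imath}{2}\Z[\imath]$ or $\frac{\imath}{2}\Z[\zeta]$. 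Taking imaginary parts gives $\vartheta(\pp v,\pp w)\in\tfrac{1}{2}\Z$ for $d\in\{2,4\}$; for $d\in\{3,6\}$, writing an element $S=m+n\zeta$ of $\Z[\zeta]$ and using $\mathrm{Im}(\zeta)=\pm\sqrt3/2$ shows $\vartheta(\pp v,\pp w)=(2m\pm n)/4\in\tfrac{1}{4}\Z$. In both cases $\pp^*\vartheta^{N-r}$ takes values in $\Q$ on any $(N-r)$-tuple of elements of $\Lambda_\zeta$. For the second factor, the identity $(c_i^\zeta\wedge\bar c_i^\zeta)(v,w)=2\imath\,\mathrm{Im}\bigl(v(c_i)\overline{w(c_i)}\bigr)$ combined with $v(c_i)\overline{w(c_i)}\in\Z[\imath]$ or $\Z[\zeta]$ yields
\[
(c_i^\zeta\wedge\bar c_i^\zeta)(v,w)\in \begin{cases} 2\imath\,\Z & \text{if } d\in\{2,4\},\\ \imath\sqrt3\,\Z & \text{if } d\in\{3,6\},\end{cases}
\]
using $\mathrm{Im}(\Z[\imath])=\Z$ and $\mathrm{Im}(\Z[\zeta])=\tfrac{\sqrt3}{2}\Z$ respectively.

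Expanding $\Theta(v_1,\dots,v_{2N})$ as a signed sum over $(2(N-r),2r)$-shuffles, each summand is a product of $(N-r)$ rational numbers coming from $\pp^*\vartheta$ and $r$ contributions from the factors $c_i^\zeta\wedge\bar c_i^\zeta$; the latter contribute $(2\imath)^r\Z$ for $d\in\{2,4\}$ and $(\imath\sqrt3)^r\Z$ for $d\in\{3,6\}$. Multiplying by the scalar prefactor $\frac{1}{|1-\zeta|^{2r}}\cdot\frac{1}{(N-r)!}\cdot(\frac{\imath}{2})^r$ and simplifying,
\[
\bigl(\tfrac{\imath}{2}\bigr)^{\!r}(2\imath)^r=(-1)^r\in\Q, \qquad \bigl(\tfrac{\imath}{2}\bigr)^{\!r}(\imath\sqrt3)^r=\bigl(-\tfrac{\sqrt3}{2}\bigr)^{\!r}\in(\sqrt3)^r\,\Q,
\]
so $\lambda\in\Q$ for $d\in\{2,4\}$ and $\lambda\in(\sqrt3)^r\,\Q$ for $d\in\{3,6\}$, as claimed. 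The only step requiring attention rather than being a genuine obstacle is the careful bookkeeping in the rings $\Z[\imath]$ and $\Z[\zeta]$ under complex conjugation and imaginary-part extraction; the $\sqrt3$ factor in the cases $d\in\{3,6\}$ arises precisely because $\mathrm{Im}(\Z[\zeta])=\tfrac{\sqrt3}{2}\Z$ for these primitive roots, whereas $\mathrm{Im}(\Z[\imath])=\Z$ yields purely rational ratios for $d\in\{2,4\}$.
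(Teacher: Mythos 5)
Your proof is correct, and it takes a genuinely different route from the paper's. You evaluate the $(N,N)$-form $\Theta$ directly on a $\Z$-basis of the lattice $\Lambda_\zeta$ (on which $d\vol^*$ takes the value $\pm1$ by definition), using only that the periods $v(a_j),v(b_j),v(c_i)$ of lattice elements lie in $\Z[\imath]$ (resp.\ $\Z[\zeta]$), that these rings are stable under conjugation and multiplication, and that $\mathrm{Im}(\Z[\imath])=\Z$ while $\mathrm{Im}(\Z[\zeta])=\tfrac{\sqrt3}{2}\Z$; the ratio then falls out of the ring-theoretic bookkeeping in the shuffle expansion. The paper instead builds explicit linear coordinates: a coordinate projection $\qq$ restricting to an isomorphism $V_\zeta\to\C^N$ with inverse $\rr$ defined over $\Q(\zeta)$, an expression of $\rr^*d\vol$ through $\det(\vartheta_{ij})$ with $\vartheta_{ij}\in\Q(\zeta)$, and a comparison of the covolume of $\qq(\Lambda_\zeta)$ with that of the standard lattice $(\Z\oplus\imath\Z)^N$ or $(\Z\oplus e^{\frac{2\pi\imath}{3}}\Z)^N$ via a lattice index $\ell$. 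Your argument is shorter, avoids introducing $\ell$, and sidesteps the parity discussion the paper needs for $d=2$ (where $\det(\vartheta_{ij})\in\Q\cap\imath^K\R$ forces $K$ even), since in your expansion every summand is manifestly real; the paper's route, on the other hand, produces an explicit closed formula for the ratio in terms of $\det(\vartheta_{ij})$ and $\ell$, which is more amenable to actual evaluation. One cosmetic slip: in deriving $\vartheta(\pp v,\pp w)=(2m\pm n)/4$ you invoke $\mathrm{Im}(\zeta)=\pm\sqrt3/2$, but since $\vartheta(\pp v,\pp w)=\tfrac12\mathrm{Re}(S)$ for $S=m+n\zeta$ the relevant fact is $\mathrm{Re}(\zeta)=\pm\tfrac12$; the formula you state is nevertheless the correct one and the conclusion is unaffected.
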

\begin{proof}
Let $\{a_1,\dots,a_{\hat{g}},b_1,\dots,b_{\hat{g}}\}$ be a symplectic basis of $H_1(\hat{M},\Z)$.
For $i=1,\dots,r$, let $c_i$ be as in Proposition~\ref{prop:ker:project:coh}.
Note that $\{c_1,\dots,c_r\}$ are independent in $H_1(\hat{M},\hat{\Sig},\C)$, and
$$
\mathrm{Span}(a_1,\dots,a_{\hat{g}},b_1,\dots,b_{\hat{g}})\cap \mathrm{Span}(c_1,\dots,c_r)=\{0\}.
$$
We can complete the family $\{a_1,\dots,a_{\hat{g}},b_1,\dots,b_{\hat{g}},c_1,\dots,c_r\}$ with some cycles $c_{r+1},\dots,c_{\hat{n}-1}$ such that
$$
\Bcal:=\{a_1,\dots,a_{\hat{g}},b_1,\dots,b_{\hat{g}},c_1,\dots,c_{\hat{n}-1}\}
$$
is a basis of $H_1(\hat{M},\hat{\Sig},\Z)$. We will use $\Bcal$ to identify $H^1(\hat{M},\hat{\Sig},\C)$ with $\C^{2\hat{g}+\hat{n}-1}\simeq \C^{2\hat{g}}\times\C^{\hat{n}-1}$. In this setting, the projection $\pp: H^1(\hat{M},\hat{\Sig},\C) \ra H^1(\hat{M},\C)$ is  given by the natural projection $\pp: \C^{2\hat{g}+\hat{n}-1}\simeq \C^{2\hat{g}}\times \C^{\hat{n}-1} \ra \C^{2\hat{g}}$.

By definition, $V_\zeta \subset \C^{2\hat{g}+\hat{n}-1}$ is the eigenspace for the eigenvalue $\zeta$ of $T_\zeta$.
Since $T_\zeta$ is given by an integral matrix, there is a basis of $V_\zeta$ consisting of vectors with coordinates in $\Q(\zeta)$.
By Proposition~\ref{prop:ker:project:coh}, the projection $H_\zeta=\pp(V_\zeta)$ of $V_\zeta$ is a subspace of dimension $K:=(N-r)$ of  $\C^{2\hat{g}}$. Thus there exists a family of $K$ indices $\{i_1,\dots,i_K\} \subset \{1,\dots, 2\hat{g}\}$ such that the map
$$
\begin{array}{cccc}
\qq_1: &\C^{2\hat{g}} & \ra &  \C^K\\
       & (z_1,\dots,z_{2\hat{g}}) & \mapsto & (z_{i_1},\dots,z_{i_K})
\end{array}
$$
restricts to an isomorphism from $H_\zeta$ to $\C^K$.  
Define
$$
\begin{array}{cccc}
\qq: & \C^{2\hat{g}+\hat{n}-1} & \ra  & \C^N\\
     & (z_1,\dots,z_{2\hat{g}+\hat{n}-1}) & \mapsto & (z_{i_1},\dots,z_{i_K},z_{2\hat{g}+1},\dots,z_{2\hat{g}+r})
\end{array}
$$
To simplify the notation, for $z=(z_1,\dots,z_{2\hat{g}+\hat{n}-1})$, we will write $\qq(z)=(\qq_1\circ \pp(z),\qq_2(z))$, where
$$
\qq_1\circ\pp(z)=(z_{i_1},\dots,z_{i_K}) \in \C^K \text{ and } \qq_2(z)=(z_{2\hat{g}+1},\dots,z_{2\hat{g}+r})\in \C^{r}.
$$
\begin{Claim}\label{clm:proj:bijective}
The map $\qq$ restricts to an isomorphism from $V_\zeta$ onto $\C^{N}$.
\end{Claim}
\begin{proof}
Since $\dim V_\zeta=N$, it is enough to show that $\qq_{|V_\zeta}$ is injective. Let $z \in \ker\qq\cap V_\zeta$.
Since $\qq_1\circ\pp(z)=0$, we get $\pp(z) \in \ker\qq_1\cap H_\zeta$. But the restriction of $\qq_1$ to $H_\zeta$ is a bijection, therefore $\pp(z)=0\in \C^{2\hat{g}}$, which means that $z\in \ker\pp \cap V_\zeta$.

By Proposition~\ref{prop:ker:project:coh}, $\qq_2$ restricts to a bijection from $\ker\pp \cap V_\zeta$ onto $\C^r$. We have $\qq_2(z)=0$, therefore, $z=0$. We can then conclude that $\qq$ is injective, and hence bijective.
\end{proof}
Claim~\ref{clm:proj:bijective} implies that there is a linear map $\rr: \C^N \ra \C^{2\hat{g}+\hat{n}-1}$ such that  $\rr(\C^N)=V_\zeta$ and  $\qq\circ \rr =\id_{\C^N}$. Note that $\rr$ is given by a matrix with coefficients in $\Q(\zeta)$.
Given $w=(w_1,\dots,w_N)\in \C^{N}$, let $z=(z_1,\dots,z_{2\hat{g}+\hat{n}-1})=\rr(w)$. By definition, we have
$$
z_{i_j}=w_j, \; j=1\dots,K=N-r, \text{ and } z_{2\hat{g}+j}=w_{K+j}, \; j=1,\dots,r.
$$

\begin{Claim}\label{clm:ab:per:function}
For $i\in \{1,\dots,2\hat{g}\}$, the coordinate $z_i$ is a linear combination of $(w_1,\dots,w_K)$.
\end{Claim}
\begin{proof}
Assume that $w_1=\dots=w_K=0$. We have $w=\qq\circ\rr(w)=\qq(z)=(\qq_1\circ\pp(z),\qq_2(z))$, which implies that $\qq_1\circ\pp(z)=0$. It follows that $\pp(z)\in \ker\qq_1\cap \pp(V_\zeta)=\ker\qq_1\cap H_\zeta$. But the restriction of $\qq_1$ to $H_\zeta$ is a bijection, therefore $\pp(z)=0$, that is $z_1=\dots=z_{2\hat{g}}=0$. This means that as linear form on $\C^N$, $\ker(z_i)$ contains $\cap_{j=1}^K\ker(w_j)$. Hence $z_i$ is a linear combination of $(w_1,\dots,w_K)$.
\end{proof}
Since $\rr$ is defined over $\Q(\zeta)$ and $\vartheta$ is given by \eqref{eq:sympl:form:inter} we have
$$
\rr^*\vartheta=\sum_{i,j=1}^{K}\vartheta_{ij}dw_i\wedge d\bar{w}_j
$$
where $\vartheta_{ij} \in \Q(\zeta)$ and $\vartheta_{ji}=-\bar{\vartheta}_{ij}$.

\begin{Claim}\label{clm:det:inters:form}
We have
$$
\det(\vartheta_{ij}) \in \Q(\zeta)\cap \imath^K\R.
$$
\end{Claim}
\begin{proof}
Since $(\vartheta_{ji})=(-\bar{\vartheta}_{ij})$, we have $\det(\vartheta_{ij})=(-1)^K \overline{\det(\vartheta_{ij})}$,
which means that $\det(\vartheta_{ij}) \in \imath^K\R$.
By definition, the coefficients $\vartheta_{ij}$ belong to $\Q(\zeta)$, therefore  $\det(\vartheta_{ij}) \in \Q(\zeta)\cap \imath^K\R$.
\end{proof}

\begin{Claim}\label{clm:rat:v:forms:loc}
We have
\begin{equation*}
\frac{\rr^*d\vol}{\rr^*d\vol^*}=\left\{
\begin{array}{ll}
\frac{(-2\imath)^K}{|1-\zeta|^{2r}}\det(\vartheta_{ij})\ell & \text{ if } d\in \{2,4\}\\
\frac{(-2\imath)^K}{|1-\zeta|^{2r}}\det(\vartheta_{ij})(\frac{\sqrt{3}}{2})^N\ell  & \text{ if } d\in \{3,6\}.
\end{array}
\right.
\end{equation*}
where $\ell$ is a positive integer.
\end{Claim}

\begin{proof}
By definition,
\begin{eqnarray*}
   \rr^*d\vol & = & \frac{1}{|1-\zeta|^{2r}}\cdot\frac{1}{K!}(\frac{\imath}{2})^r (\rr^*\vartheta)^K\wedge \left(\bigwedge_{j=K+1}^N dw_j\wedge d\bar{w}_j \right)  \\
     & = & \frac{1}{|1-\zeta|^{2r}}\cdot(\frac{\imath}{2})^r\det(\vartheta_{ij})dw_1d\bar{w}_1\dots dw_Nd\bar{w}_N\\
\end{eqnarray*}
Writing $w_i=u_i+\imath v_i$, we get
$$
\rr^*d\vol=\frac{(-2\imath)^K}{|1-\zeta|^{2r}}\det(\vartheta_{ij})du_1dv_1\dots du_Ndv_N.
$$
Let $\Lambda=(\Z\oplus\imath\Z)^N$ if $d\in\{2,4\}$, and $\Lambda= (\Z\oplus e^{\frac{2\pi\imath}{3}}\Z)^N$ if $d \in \{3,6\}$.
By the definition of $\qq$, we see that $\Lambda=\qq((\Z+\imath\Z)^{2\hat{g}+\hat{n}-1})$ if $d=2$,
and $\Lambda=\qq((\Z+\zeta\Z)^{2\hat{g}+\hat{n}-1})$ if $d\in \{3,4,6\}$.

Recall that $\Lambda_\zeta=(\Z\oplus\imath\Z)^{2\hat{g}+\hat{n}-1}\cap V_\zeta$ if $d=2$, and  $\Lambda_\zeta=(\Z\oplus\zeta\Z)^{2\hat{g}+\hat{n}-1}\cap V_\zeta$ if $d\in\{3,4,6\}$. Since $\Lambda_\zeta$ is a lattice of $V_\zeta$ and $\qq: V_\zeta \ra \C^N$ is an isomorphism, $\qq(\Lambda_\zeta)$ must be a lattice of $\C^N$ which is contained in $\Lambda$ .
Let $\ell$ be the index of $\qq(\Lambda_\zeta)$ in $\Lambda$.
By definition, $\Lambda_\zeta$ has covolume $1$ with respect to $d\vol^*$.
It follows that $\Lambda$ has covolume $\frac{1}{\ell}$ with respect to $\rr^*d\vol^*$.

With respect to the Lebesgue volume form $du_1dv_1\dots du_Ndv_N$ on $\C^N$, the covolume of the lattice $\Lambda$ is $1$ if $d\in\{2,4\}$, and $(\frac{\sqrt{3}}{2})^N$ if $d\in \{3,6\}$. Therefore
$$
\frac{\rr^*d\vol}{\rr^*d\vol^*}=\left\{
\begin{array}{ll}
\frac{(-2\imath)^K}{|1-\zeta|^{2r}}\det(\vartheta_{ij})\ell & \text{ if } d\in \{2,4\}\\
\frac{(-2\imath)^K}{|1-\zeta|^{2r}}\det(\vartheta_{ij})(\frac{\sqrt{3}}{2})^N\ell  & \text{ if } d\in \{3,6\}.
\end{array}
\right.
$$
\end{proof}

Because of Claim~\ref{clm:det:inters:form}, we have the following
\begin{itemize}
 \item If $d=2$, then $\Q(\zeta)=\Q$, and the condition $\det(\vartheta_{ij}) \in \imath^K\R$ implies that $K$ is even (note that $K=2g+n-2-r$, and $n-r$ is the number of odd order zeros which must be even). Hence $(-2\imath)^K\det(\vartheta_{ij})\ell \in \Q$.

 \item If $d=4$, then $\Q(\zeta)=\Q(\imath)$. Therefore $(-2\imath)^K\det(\vartheta_{ij}) \in \Q(\imath)\cap \R=\Q$, which implies that $(-2\imath)^K\det(\vartheta_{ij})\ell \in \Q$.

 \item If $d\in \{3,6\}$, then $\Q(\zeta)=\Q(e^{\frac{\imath\pi}{3}})$. We have two cases: if $K$ is odd then  $\Q(e^{\frac{\imath\pi}{3}}) \cap \imath\R=\imath\sqrt{3}\Q$, and if $K$ is even then $\Q(e^{\frac{\imath\pi}{3}}) \cap \R=\Q$. Thus
   $$
   (-2\imath)^K\det(\vartheta_{ij})(\frac{\sqrt{3}}{2})^N\ell = (\frac{\sqrt{3}}{2})^r(-\imath\sqrt{3})^K\det(\vartheta_{ij})\ell \in (\sqrt{3})^r\cdot\Q.
   $$
 \end{itemize}
Since for all $d \in \{2,3,4,6\}$, $|1-\zeta|^2$ is always an integer, the proposition follows.
\end{proof}

\section{Delaunay triangulation}\label{sec:Delaunay}
In this section we review some basic properties of the Delaunay triangulation of flat surfaces. Our main reference on the matter is \cite[Sect. 4 and 5]{MS91} (see also \cite{Th98}).
Let $M$ be a flat surface with conical singularities and $\Sig$ a finite subset of $M$ which contains all the singularities. Let us denote by $\dd$ the distance induced by the flat metric on $M$.
We first describe the decomposition of $M$ into {\em Voronoi cells}.
\begin{Definition}\label{def:Voronoi:cells}
The $2$-dimensional Voronoi cells of the pair $(M,\Sig)$ are  connected components of the set of points in $M$ which have a unique length-minimizing path to $\Sig$. The $1$-dimensional Voronoi cells are connected components of the set of points that have exactly two length-minimizing paths to $\Sig$. Finally, the $0$-dimensional Voronoi cells are the points which have at least three length minimizing paths to $\Sig$.
\end{Definition}
The  Voronoi $0$-cells  are isolated points in $M$, in particular they are finite. The  Voronoi $1$-cells are geodesic segments with endpoints being $0$-cells. The $2$-cells are open domains bounded by the union of some $1$-cells and $0$-cells, each $2$-cell contains a unique point in $\Sig$.

The {\em Delaunay decomposition} is the dual of the Voronoi decomposition, which is defined as follows. For any  Voronoi $0$-cell $s \in M$, let $\dd_s=\dd(s,\Sig)$. There is a map $\phi_s: D(0,\dd_s) \ra M$, where $D(0,\dd_s)$ is the disc of radius $\dd_s$ centered at $0$ in the plane, which satisfies
\begin{itemize}
 \item $\phi_s$ is locally isometric,

 \item $\phi_s(0)=s$.
\end{itemize}
By definition, $\phi_s^{-1}(\Sig)$ is a finite subset of $\partial D(0,\dd_s)$ which has at least $3$ points. The convex hull of $\phi_s^{-1}(\Sig)$ is a convex polygon $H_s$ inscribed in $D(0,\dd_s)$. The restriction of $\phi_s$ into $H_s$ is an embedding, and if $s'$ is another  Voronoi $0$-cell then $\phi_{s'}(H_{s'})$ and $\phi_s(H_s)$ can only meet in their boundary (see \cite[Lem. 4.2]{MS91}).

\begin{Theorem}[\cite{MS91}]
The domains $\{\phi_s(H_s), \, s \hbox{ is a Voronoi $0$-cell of } (M,\Sig)\}$ define a decomposition of $M$ into cells:
\begin{itemize}
\item the $2$-cells of this decomposition are $\{\phi_s(\inter(H_s)), \, s  \hbox{ is a Voronoi $0$-cell of } (M,\Sig)\}$,

\item the $1$-cells are the geodesic segments (with their endpoints excluded) joining the points in $\Sig$ that are contained in the border of some $2$-cells,

\item the $0$-cells are points in $\Sig$.
\end{itemize}
This decomposition is called  {\em Delaunay decomposition} of $(M,\Sig)$.
\end{Theorem}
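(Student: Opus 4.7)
The plan is to construct the Delaunay decomposition as the cellular dual of the Voronoi decomposition, which we take as given. Disjointness of the open $2$-cells $\phi_s(\inter(H_s))$ is already supplied by \cite[Lem.~4.2]{MS91}, so the remaining content of the theorem is to show (i) that $\bigcup_s \phi_s(H_s) = M$, and (ii) that the boundary of the resulting decomposition has the structure stated.

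For (i), I would argue by cases according to where a given $x \in M$ lies in the Voronoi stratification. If $x$ lies in the interior of a Voronoi $2$-cell $V_\sigma$, with $\sigma \in \Sig$ the unique closest singularity, the strategy is to \emph{grow} an embedded round disc in $M$ starting from $x$: initially one embeds a disc of radius $\dd(x,\sigma)$ with $\sigma$ on its boundary, and then slides the center along $V_\sigma$ away from $\sigma$, increasing the radius while keeping $\sigma$ on the boundary, until the disc picks up at least two additional singularities. The center $s$ reached in this way is a Voronoi $0$-cell, and lifting the argument to the chart $\phi_s$ puts (the preimage of) $x$ inside the convex hull $H_s$ of $\phi_s^{-1}(\Sig)$. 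The cases $x$ on a Voronoi $1$-cell or Voronoi $0$-cell are analogous and simpler, since two or three length-minimizing singularities are already visible. Finally, if $x \in \Sig$, then $x$ is the designated nearest singularity of the Voronoi $2$-cell $V_x$; its boundary is nonempty (the Voronoi skeleton is nontrivial under the standing assumption $\chi(M') < 0$), and any Voronoi $0$-cell $s \in \partial V_x$ has $x$ as a vertex of $H_s$, hence as a $0$-cell of $\phi_s(H_s)$.

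For (ii), each edge of the planar convex polygon $H_s$ is a chord of $\partial D(0,\dd_s)$ joining two points of $\phi_s^{-1}(\Sig)$; since $\phi_s$ is a local isometry that embeds $H_s$, the image of such an edge is a Euclidean-geodesic segment in $M'$ whose closure joins two points of $\Sig$. Two closed $2$-cells $\phi_s(H_s)$ and $\phi_{s'}(H_{s'})$ share such an arc exactly when $s$ and $s'$ are the endpoints of a common Voronoi $1$-cell, and this duality with the Voronoi combinatorics produces the claimed CW structure whose $0$-skeleton is $\Sig$. The main obstacle I foresee is the covering step (i): executing the grow-the-disc procedure rigorously requires controlling the behaviour of $\phi_s$, which is only a local isometry on the open disc $D(0,\dd_s)$, as the expanding ball approaches the conical singularities sitting on its boundary, and then verifying that the planar convex-hull combinatorics in the chart faithfully reflect the metric geometry of $M$ in a neighbourhood of $s$. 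Everything else is essentially bookkeeping dual to the Voronoi decomposition.
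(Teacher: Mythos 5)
The paper offers no proof of this statement: the theorem is imported verbatim from \cite{MS91}, and the surrounding text only recalls the Voronoi construction and the duality, so there is no in-paper argument to measure yours against. Your outline does follow the route of \cite[Sect.~4]{MS91} itself --- take the Voronoi decomposition as given, realize each Delaunay $2$-cell as $\phi_s(H_s)$ for a maximal empty immersed disc centred at a Voronoi $0$-cell $s$, and obtain the covering by growing empty discs --- so the overall strategy is the right one.

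There is, however, a concrete gap in your covering step, beyond the difficulties you flag at the end. Growing the disc from $x$ until its centre reaches a Voronoi $0$-cell $s$ shows at best that $x$ lies in the image of the maximal disc $D(0,\dd_s)$; this is strictly weaker than $x\in\phi_s(H_s)$, because $H_s$ omits the circular segments between $\partial D(0,\dd_s)$ and the edges of $H_s$, and in general not even the centre $0$ belongs to $H_s$ (exactly as the circumcentre of an obtuse triangle lies outside the triangle). So the sentence ``lifting the argument to the chart $\phi_s$ puts (the preimage of) $x$ inside the convex hull $H_s$'' does not follow from what precedes it: one must track, at each stage of the sliding, on which side of the chord spanned by the current boundary singularities the point $x$ sits, and choose the sliding direction accordingly --- or else sidestep the issue entirely by combining the disjointness of the interiors (\cite[Lem.~4.2]{MS91}) with an area or Euler-characteristic count to force the union of the $\phi_s(H_s)$ to be all of $M$. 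A second imprecision compounds this: the discs you grow are immersed, not embedded (only $\phi_s|_{H_s}$ is an embedding, and that is itself part of what \cite[Lem.~4.2]{MS91} establishes rather than a free hypothesis), so the convex-hull bookkeeping must be done in the developing chart throughout, including while the centre moves. These two points are precisely where the substance of the Masur--Smillie proof lies, so as written your argument has not yet discharged the main burden of the theorem.
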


The duality between the Delaunay decomposition and the Voronoi decomposition can be seen as follows: by construction, it is clear that the set of $i$-cells of the Delaunay decomposition is in bijection with the set of $(2-i)$-cells of the Voronoi decomposition, for $i=0,2$. Let $\gamma$ be a Voronoi $1$-cell, and $s$ and $s'$ the Voronoi $0$-cells that are the endpoints of $\gamma$. Let $v$ be the holonomy vector of $\gamma$. By translating the disc $D(0,\dd_{s'})$ by  $v$, we can define a map $\phi: D(0,\dd_s)\cup D(v,\dd_{s'}) \ra M$ which is a local isometry, and sends $0$ to $s$ and $v$ to $s'$.
The circles $\partial D(0,\dd_s)$ and $\partial D(v,\dd_{s'})$ meet at two points which are mapped to points in $\Sig$. The image of the segment between these two points is the  Delaunay $1$-cell dual to $\gamma$.

\begin{Definition}\label{ef:Delaunay:triang}
 A triangulation of $M$ which is obtained   by subdividing the $2$-cells of the Delaunay decomposition of $(M,\Sig)$ into triangles is called a {\em Delaunay triangulation} of $(M,\Sig)$.
\end{Definition}
The Delaunay triangulations  are obviously not unique, but there are only finitely many of them.

\medskip

 Assume from now on that $M$ is a translation surface,  that is the metric on $M$ is defined by a holomorphic $1$-form. A {\em cylinder} on $(M,\Sig)$ is an open subset $C$ of $M\setminus\Sig$ which is isometric to $(\R/\ell\Z)\times(0,h)$, for some $\ell,h \in \R_+^*$, and not properly contained in a larger subset with the same properties. The parameters $\ell$ and $h$ are called the {\em circumference} and the {\em height} (sometimes {\em width}) of $C$ respectively.

By definition, there is an isometric embedding $\varphi: (\R/\ell\Z)\times (0,h) \ra M$ such that $\varphi(\R/\ell\Z)\times(0,h)=C$. We can extend $\varphi$ by continuity to a map $\ol{\varphi}: (\R/\Z\ell)\times[0,h] \ra M$. The images of $(\R/\ell\Z)\times\{0\}$ and $(\R/\ell\Z)\times\{h\}$ under $\ol{\varphi}$ are called the {\em boundary components} of $C$. Note that each boundary component must contain a point in $\Sig$  by definition, and the two boundary components are not necessarily disjoint in $M$.

A cylinder can be also defined as the union of all the simple closed geodesics in the same free homotopy class in $M\setminus\Sig$. Those simple closed geodesics are called the {\em core curves} of the cylinder. Each cylinder is uniquely determined by any of its core curve.

To any path $a$ in $M$ with endpoints in $\Sig$, the integration of the holomorphic $1$-form defining the flat metric structure along $a$ provides us with a complex number which will be called the {\em period} or the {\em holonomy vector} of $a$ (here, we identify $\C$ with $\R^2$).
A {\em saddle connection} on $(M,\Sig)$ is a geodesic segment with endpoints in $\Sig$ which contains no point in $\Sig$ in its interior. If $a$ is a saddle connection, its length will be denote by $|a|$. In this case, $|a|$ is also equal to the module of its period.

\medskip

Consider now a Delaunay triangulation $\Tcal$ of $(M,\Sig)$.  Recall that by definition, all the edges of $\Tcal$ are saddle connections.
The following result tells  us that if an edge of the Delaunay triangulation has sufficiently large length, then it must cross a cylinder whose height is greater than its circumference (see \cite[Th. 5.3 and Prop. 5.4]{MS91}).

\begin{Proposition}\label{prop:long:sc:high:cyl}
Assume that $\Aa(M)\leq 1$. Let $e$ be an edge of $\Tcal$. If $|e| > 2\sqrt{2/\pi}$ then $e$ must cross a cylinder $C$ whose height $h$ is greater than the circumference $\ell$. Moreover, we have
\begin{equation}\label{eq:long:sc:bound:1}
h \leq |e| \leq \sqrt{h^2+\ell^2} \leq \sqrt{2}h.
\end{equation}
\end{Proposition}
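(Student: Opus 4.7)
The plan follows Masur--Smillie \cite{MS91} and uses two ingredients: the empty-circle property of the Delaunay triangulation, and the area constraint $\Aa(M)\leq 1$.

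First, I would invoke the empty disc associated with $e$. Let $T$ be one of the two Delaunay triangles adjacent to $e$, with circumcenter $s$ (a Voronoi $0$-cell) and circumradius $R=\dd_s$, so $R\geq |e|/2$ since $e$ is a chord of the circumcircle. As recalled in Section~\ref{sec:Delaunay}, there is a locally isometric map $\phi_s:D(0,R)\to M$ with $\phi_s(0)=s$ and $\phi_s^{-1}(\Sig)\subset\partial D(0,R)$. The hypothesis $|e|>2\sqrt{2/\pi}$ gives $\pi R^2>\pi|e|^2/4>2\geq 2\Aa(M)$, so the total preimage volume $\int_{D(0,R)}1=\pi R^2$ strictly exceeds $2\Aa(M)$ while the image has area at most $\Aa(M)$; hence $\phi_s$ is not injective, and there exist distinct points $p\neq q\in D(0,R)$ with $\phi_s(p)=\phi_s(q)$.

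Second, I would extract a cylinder. Set $v:=q-p$. By convexity of $D(0,R)$ the segment $[p,q]$ lies in $D(0,R)$, so its image $\gamma=\phi_s([p,q])$ is a closed geodesic in $M\smin\Sig$ of length $|v|$. Any such $\gamma$ lies in a unique maximal flat cylinder $C\subset M$ with some circumference $\ell$ (dividing $|v|$, hence $\ell\leq |v|$) and height $h$. The translates $D(0,R)+kv$ for $k\in\Z$ all develop into $M$ through $C$ via the deck translation by $v$, and their union forms an infinite planar strip parallel to $v$, disjoint from the lifts of $\Sig$; an elementary plane-geometry computation shows that the minimum perpendicular width of this strip is $2\sqrt{R^2-|v|^2/4}$, which is a lower bound for $h$. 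The threshold $|e|>2\sqrt{2/\pi}$ is precisely what is needed (after a short case analysis on the size of $|v|$, and possibly replacing $v$ by the primitive translation) to force $h>\ell$.

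Third, I would show that $e$ crosses $C$ and establish the length inequalities. Since $e$ lifts to a chord of $D(0,R)$ whose image must traverse the developed strip of $C$, we get $e\cap C\neq\emptyset$. Because the endpoints of $e$ lie in $\Sig$ and hence on or outside $\partial C$, the portion of $e$ inside $\bar C$ runs between the two boundary components, which immediately yields $|e|\geq h$. For the upper bound, lift this portion of $e$ to the universal cover $\R\times[0,h]$ of $\bar C$: it has holonomy $(a,h)$ in cylinder coordinates, so $|e|=\sqrt{a^2+h^2}$. The Delaunay empty-circle property forces $|a|\leq\ell$, since otherwise, by the $\ell$-periodicity of lifted singularities along $\partial C$, some translate of one of the endpoints of $e$ by a nonzero multiple of $\ell$ would lie strictly inside the empty disc $D(0,R)$, contradicting $\phi_s^{-1}(\Sig)\subset\partial D(0,R)$. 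Hence $|e|\leq\sqrt{\ell^2+h^2}$, and the final inequality $\sqrt{\ell^2+h^2}\leq\sqrt{2}\,h$ is immediate from $\ell\leq h$.

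The main obstacle lies in the second step, specifically in verifying that the lower bound $h\geq 2\sqrt{R^2-|v|^2/4}$ combined with $\ell\leq |v|$ indeed yields $h>\ell$ under the precise hypothesis $|e|>2\sqrt{2/\pi}$; this calibration of constants requires care and may need a refinement of the choice of $v$ (for instance selecting the shortest non-trivial translation detected by $\phi_s$). A secondary technical point is the Delaunay argument in the third step excluding multiple windings of $e$ around $C$.
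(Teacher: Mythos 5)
The paper itself does not prove this proposition; it is quoted directly from Masur--Smillie \cite[Th.~5.3, Prop.~5.4]{MS91}, so your reconstruction has to be judged on its own. Your overall strategy is the correct one (it is exactly the Masur--Smillie argument): the empty disc $D(0,R)$ with $R\geq |e|/2>\sqrt{2/\pi}\geq\sqrt{2\Aa(M)/\pi}$ is not injectively immersed, a coincidence $\phi_s(p)=\phi_s(q)$ produces a closed geodesic avoiding $\Sig$, hence a cylinder, and the strip $\bigcup_k\bigl(D(0,R)+kv\bigr)$ gives the lower bound $h\geq 2\sqrt{R^2-|v|^2/4}$. The calibration you flag as the main obstacle is resolved precisely as you suggest: take $r$ to be the smallest radius at which $\phi_s|_{D(0,r)}$ fails to be injective, so $\pi r^2\leq \Aa(M)$, hence $\ell\leq|v|\leq 2r\leq 2\sqrt{\Aa(M)/\pi}$ while $h\geq 2\sqrt{R^2-\Aa(M)/\pi}>2\sqrt{\Aa(M)/\pi}$, giving $h>\ell$. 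The claim that $e$ actually meets the cylinder also follows from this: a chord of $D(0,R)$ disjoint from the strip of half-width $\sqrt{R^2-|v|^2/4}$ about the diameter in direction $v$ has length at most $|v|<|e|$, a contradiction.

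The genuine flaw is in your third step. Writing $|e|=\sqrt{a^2+h^2}$ for the holonomy $(a,h)$ of ``the portion of $e$ inside $\bar C$'' tacitly assumes $e\subset\bar C$, i.e.\ that both endpoints of $e$ lie on $\partial C$. There is no reason for this: the endpoints of $e$ are singularities that may lie well outside the closure of the cylinder, and your identity only bounds $|e\cap\bar C|$, not $|e|$. The correct route to the upper bound is through the circumscribing disc: let $h'$ and $h''$ be the distances from the Voronoi point $s=\phi_s(0)$ (which lies in $C$) to the two boundary components of $C$, so $h'+h''=h$; the preimages of $\partial C$ in $D(0,R)$ are chords in direction $v$ of lengths $2\sqrt{R^2-h'^2}$ and $2\sqrt{R^2-h''^2}$, each at most $\ell$ because their interiors avoid $\Sig$ while each boundary component is a closed curve of length $\ell$ containing a singularity. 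Hence $R^2\leq h'^2+\ell^2/4$ and $R^2\leq h''^2+\ell^2/4$, and Minkowski's inequality gives
$$
|e|\;\leq\;2R\;\leq\;\sqrt{h'^2+\tfrac{\ell^2}{4}}+\sqrt{h''^2+\tfrac{\ell^2}{4}}\;\leq\;\sqrt{(h'+h'')^2+\ell^2}\;=\;\sqrt{h^2+\ell^2}.
$$
This is the same disc-and-chords geometry the paper deploys in its proof of Lemma~\ref{lm:sc:cross:long:cyl}, and it is what you should substitute for the holonomy computation; the remaining inequalities $h\leq|e|$ (each component of $e\cap C$ is a non-horizontal geodesic, hence crosses from one boundary component to the other) and $\sqrt{h^2+\ell^2}\leq\sqrt{2}\,h$ then follow as you state.
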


\begin{Definition}\label{def:long:cyl}
A cylinder whose height is greater than $\frac{2\sqrt{2}}{\sqrt{\pi}}\cdot \sqrt{\Aa(M)}$ will be called a {\em long cylinder}.
\end{Definition}
To ease the notation, in what follows we will write $\alpha=\frac{2\sqrt{2}}{\sqrt{\pi}}$.

\begin{Lemma}\label{lm:long:cyls:disjoint}
If $C$ and $C'$ are two long cylinders on $(M,\Sig)$, then the core curves of $C$ do not cross $C'$, that is $C$ and $C'$ are disjoint.
\end{Lemma}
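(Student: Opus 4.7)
The plan is to argue by contradiction: assume that a core curve $\gamma$ of $C$ meets $C'$, and derive that $\Aa(M)$ strictly exceeds itself. Write $(h,\ell)$ and $(h',\ell')$ for the heights and circumferences of $C$ and $C'$. By the definition of ``long cylinder'' one has $h > \alpha\sqrt{\Aa(M)}$ and $h' > \alpha\sqrt{\Aa(M)}$, where $\alpha=\tfrac{2\sqrt{2}}{\sqrt{\pi}}$, and from the standard area formula for a flat cylinder, $\ell h = \Aa(C) \leq \Aa(M)$ and $\ell' h' = \Aa(C') \leq \Aa(M)$.

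The first step is to establish the key inequality $\ell \geq h'$. Since $C$ and $C'$ are distinct, their core directions cannot be parallel: otherwise the core curve $\gamma$ of $C$ passing through a point of $C \cap C'$ would lie along a leaf of the foliation of $C'$, hence would itself be a closed core curve of $C'$, forcing $C = C'$ by the maximality in the definition of a cylinder. Thus $\gamma$ has holonomy transverse to the foliation of $C'$, so $\gamma \cap C'$ is a disjoint union of geodesic arcs, each of which connects the two boundary components of $C'$ and therefore has length at least $h'$. Summing the lengths of these arcs yields $\ell = |\gamma| \geq h'$, and interchanging the roles of $C$ and $C'$ gives $\ell' \geq h$ by the same argument.

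To conclude, the plan is to combine these inequalities with the area bound. From $\ell \geq h'$ and $\ell h \leq \Aa(M)$ one gets $h h' \leq \ell h \leq \Aa(M)$. On the other hand, the long-cylinder hypothesis implies
\[
 h h' > \alpha^2\, \Aa(M) = \frac{8}{\pi}\,\Aa(M) > \Aa(M),
\]
which is the desired contradiction. The only delicate step in this argument is the preliminary observation that distinct long cylinders must have non-parallel core directions (so that $\gamma$ really enters and leaves $C'$ transversally across its two boundary components); once this is in place, the rest of the proof is just the area bookkeeping above, and it is precisely the fact $\alpha^2 = 8/\pi > 1$ that makes the estimate work.
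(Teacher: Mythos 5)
Your proof is correct and follows essentially the same route as the paper: the key inequality is that a core curve of $C$ meeting $C'$ must traverse it, giving $\ell \geq h'$, which combined with $\ell h = \Aa(C) \leq \Aa(M)$ and the long-cylinder bounds yields a contradiction because $\alpha^2 = 8/\pi > 1$. The only difference is that you spell out why the crossing arcs run from one boundary component of $C'$ to the other (non-parallel core directions), a point the paper simply asserts.
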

\begin{proof}
Let $h$ and  $\ell$ (resp. $h'$ and $\ell'$)  denote the height and circumference of $C$ (resp. of $C'$) respectively.
We have $\Aa(C)=h\ell \leq \Aa(M)$. Therefore, $\ell \leq \frac{\Aa(M)}{h} < \frac{\sqrt{\Aa(M)}}{\alpha}$.

Let $c$ be a core curve of $C$.
If $c$ intersects $C'$, it must cross $C'$  entirely. Hence $|c|=\ell \geq h' >\alpha\sqrt{\Aa(M)}$, which implies
$$
\frac{\sqrt{\Aa(M)}}{\alpha} > \alpha\sqrt{\Aa(M)}.
$$
Since $\alpha >1$, this is impossible.
Hence  $C$ and $C'$ are disjoint.
\end{proof}

Conversely to Proposition~\ref{prop:long:sc:high:cyl}, we have

\begin{Lemma}\label{lm:sc:cross:long:cyl}
Let $C$ be a long cylinder in  $(M,\Sig)$ whose height and circumference are denoted by $h$ and $\ell$ respectively.
Let $A=\Aa(M)$. Let $e$ be an edge of a Delaunay triangulation that crosses $C$.  Then
\begin{itemize}
\item[(i)] if $(x,y)$ are the coordinates of the period vector of $e$ in  the orthonormal basis $(v_1,v_2)$ of $\R^2$, where $v_1$ is the unit vector in the direction of the core curves of $C$,  then $|x| \leq \ell$,

\item[(ii)] $|e| < h+\frac{\sqrt{A}}{\alpha^3}$,

\item[(iii)] $e$ crosses $C$ once, and $C$ is the unique long cylinder that is crossed by $e$.

\end{itemize}
\end{Lemma}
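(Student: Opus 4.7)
The plan is to reduce all three assertions to a single length estimate, namely
$$
|e|^2 \leq h^2 + \ell^2, \qquad (\ast)
$$
together with the elementary observation that since $e$ is a straight segment crossing $C$, the $v_2$-coordinate is monotone along $e$ and the subsegment $e \cap C$ traverses the strip from one boundary component of $C$ to the other, so the $v_2$-displacement of the full period satisfies $|y| \geq h$.

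I would first establish $(\ast)$. Since $e\cap C$ spans the two boundary components of $C$, its length is at least $h$, so $|e| \geq h > \alpha\sqrt{\Aa(M)}$. Proposition~\ref{prop:long:sc:high:cyl} then yields a cylinder $C^{*}$ with $h^{*} > \ell^{*}$ crossed by $e$ and satisfying $|e| \leq \sqrt{h^{*2}+\ell^{*2}}$. It remains to identify $C^{*}=C$. Assuming $C^{*}\neq C$ with $C\cap C^{*}=\varnothing$, the portions of $e$ inside $C$ and inside $C^{*}$ contribute separately to $|e|$, giving $|e|\geq h+h^{*}$; combined with $(h+h^{*})^2 \leq h^{*2}+\ell^{*2}$ this yields $\ell^{*2} \geq h^2+2hh^{*}$, and together with $h^{*}\ell^{*}\leq \Aa(M)$, $h^{*}>\ell^{*}\geq h$, and $h>\alpha\sqrt{\Aa(M)}$ one obtains the contradiction $\alpha^2 < 1$. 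The remaining case where $C$ and $C^{*}$ overlap (possible only when their core directions are distinct) must be handled by a finer decomposition of $|e|$ across the two cylinders and their parallelogram of intersection.

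Granting $(\ast)$, part (i) is immediate: $x^2 = |e|^2 - y^2 \leq (h^2+\ell^2) - h^2 = \ell^2$. For part (ii), write $|e| = d_1 + L + d_2$ with $L = |e\cap C| \geq h$ and $d_1, d_2 \geq 0$ the lengths of the two pieces of $e$ outside $\overline{C}$. Using $(\ast)$ and $\sqrt{h^2+\ell^2} \leq h + \ell^2/(2h)$, together with $\ell < \sqrt{\Aa(M)}/\alpha$ (extracted from the area computation in the proof of Lemma~\ref{lm:long:cyls:disjoint}) and $h > \alpha\sqrt{\Aa(M)}$, one computes $\ell^2/(2h) < \sqrt{\Aa(M)}/(2\alpha^3)$, giving $|e| \leq h + \ell^2/(2h) < h + \sqrt{\Aa(M)}/\alpha^3$.

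Part (iii) then falls into line. A second crossing of $C$ by $e$ would add another $h$ to $|e|$, so $|e|\geq 2h$; combined with $(\ast)$ this forces $\ell^2 \geq 3h^2$, contradicting $h>\ell$. If $e$ also crossed a second long cylinder $C'$, Lemma~\ref{lm:long:cyls:disjoint} makes $C$ and $C'$ disjoint, so $|e|\geq h+h'>h+\alpha\sqrt{\Aa(M)}$; together with (ii) this gives $\alpha\sqrt{\Aa(M)} < \sqrt{\Aa(M)}/\alpha^3$, i.e.\ $\alpha^4<1$, contradicting $\alpha>1$. The principal technical obstacle in the whole argument is the identification $C^{*}=C$ inside the proof of $(\ast)$, specifically ruling out the case of overlapping cylinders with distinct core directions.
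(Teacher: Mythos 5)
Your route is genuinely different from the paper's. The paper does not invoke Proposition~\ref{prop:long:sc:high:cyl} at all in this proof; it works directly with the Voronoi disc: $e$ is a chord $\tilde e$ of an isometrically embedded disc $D(0,R)$ centered at a Voronoi $0$-cell, the preimage of $\partial C$ consists of two chords $a',a''$ of length $<\ell$, and the key step is to show that the center of the disc lies in the band between the two lines carrying $a'$ and $a''$. This yields $|x|\le\max\{|a'|,|a''|\}\le\ell$ and $|e|\le 2R<h+A/(\alpha^2h)$ directly, with no need to identify any auxiliary cylinder with $C$. Granting your inequality $(\ast)$, your deductions of (i) (via $|y|\ge h$, using that the direction of a geodesic segment is globally constant on a translation surface), of (ii), and of (iii) are all correct, and (iii) matches the paper's argument.

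The gap is exactly where you flag it: the proof of $(\ast)$ is incomplete. There are two issues. A minor one: Proposition~\ref{prop:long:sc:high:cyl} as stated requires $\Aa(M)\le 1$ and $|e|>\alpha$, whereas you only know $|e|\ge h>\alpha\sqrt{A}$ with $\sqrt{A}\le 1$; you need the rescaled, scale-invariant version of that proposition. The serious one: you must identify the cylinder $C^{*}$ it produces with $C$, and you only treat the case $C\cap C^{*}=\varnothing$, explicitly leaving the overlapping case unresolved. Note that Lemma~\ref{lm:long:cyls:disjoint} cannot be quoted here, since $C^{*}$ is only known to satisfy $h^{*}>\ell^{*}$ and $h^{*}\ge |e|/\sqrt{2}$; it is not known to be a long cylinder. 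The missing case can in fact be closed: two distinct maximal cylinders that overlap must have transverse core directions (parallel overlapping cylinders share a core curve and hence coincide), so a core curve of $C$ meeting $C^{*}$ must traverse it entirely, giving $\ell\ge h^{*}\ge h/\sqrt{2}>\alpha\sqrt{A}/\sqrt{2}$, which contradicts $\ell\le A/h<\sqrt{A}/\alpha$ because $\alpha^{2}=8/\pi>\sqrt{2}$. As written, however, $(\ast)$ --- and with it all three parts of the lemma --- remains unproven, so the argument has a genuine hole, albeit a repairable one.
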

\begin{proof}
Without loss of generality, we can suppose that $C$ is horizontal.
By construction, there is a Voronoi $0$-cell $s$ and an isometric embedding $\phi_s: D(0,R) \ra M$, where $R=\dd_s$, such that $\phi_s(0)=s$, and  $e$ is the image of a secant $\tilde{e}$ of the circle $\partial D(0,R)$ under $\phi_s$.
Since $|e|=|\tilde{e}|\leq 2R$, and $|e| \geq h$, we get that
\begin{equation}\label{ineq:long:cyl:R}
R > \frac{h}{2}.
\end{equation}

The preimage of the boundary of $C$ in $D(0,R)$ consists of two horizontal secants of the circle $\partial D(0,R)$ which will be denoted by $a'$ and $a''$. Since no point in the interior of $D(0,R)$ is mapped to a point in $\Sig$,  the lengths of $a'$ and $a''$ are smaller than the circumference $\ell$.
Note also that since $\Aa(C)=\ell h \leq \Aa(M)= A$, we must have
\begin{equation}\label{ineq:long:cyl:ell}
\ell \leq \frac{A}{h} < \frac{\sqrt{A}}{\alpha}.
\end{equation}
Therefore,
\begin{equation}\label{ineq:long:cyl:a}
\max\{|a'|,|a''|\} < \frac{\sqrt{A}}{\alpha}.
\end{equation}

Let $L'$ and $L''$ be the horizontal lines that contain $a'$ and $a''$ respectively.
Let $H$ be the horizontal band bounded by $L'$ and $L''$ in the plane.
Let $\sig'$ (resp. $\sig''$) be the arc of $\partial D(0,R)$ outside of $H$ which has the same endpoints as $a'$ (resp. as $a''$).
Since $\tilde{e}$ crosses $H$ entirely, it must have an endpoint in $\sig'$ and an endpoint in $\sig''$ (see Fig.~\ref{fig:edge:cross:cyl}).

\begin{figure}[htbp]
\centering
\begin{tikzpicture}[scale=0.5]
\fill[black!10] (-7,-3) -- (7,-3) -- (7,4) -- (-7,4) -- cycle;
 \draw (0,0) circle (5);
 \draw (-7,4) -- (7,4) (-7,-3) -- (7,-3);

 \filldraw[fill=black] (0,0)  circle (2pt);

 \draw[<->, >= angle 45] (0,0) -- (0,4); \draw (0,2)  node[right] {$\tiny h'$};
 \draw[<->, >= angle 45] (0,0) -- (0,-3); \draw (0,-1.2) node[right] {$\tiny h''$};

 \draw[very thick] (115:5) -- (270:5);
 \filldraw[black] (115:5) circle (4pt);
 \filldraw[black] (270:5) circle (4pt);
 \draw (-2,1) node {$\tiny \tilde{e}$};

 \draw (1.5,4) node[below] {$\tiny a'$};
 \draw (2.5,-3) node[above] {$\tiny a''$};
 \draw (0,5) node[above] {$\tiny \sigma'$};
 \draw (2,-4) node {$\tiny \sigma''$};

 \draw[<->, >=angle 45] (5.5,4) -- (5.5,-3);
 \draw (5.5,0.5) node[right] {$\tiny h$};
 \draw (-7,4) node[left] {$\tiny L'$};
 \draw (-7,-3) node[left] {$\tiny L''$};
 \draw (-7,0.5) node[left] {$\tiny H$};

\end{tikzpicture}
\caption{Edge crossing a cylinder.}
\label{fig:edge:cross:cyl}
\end{figure}
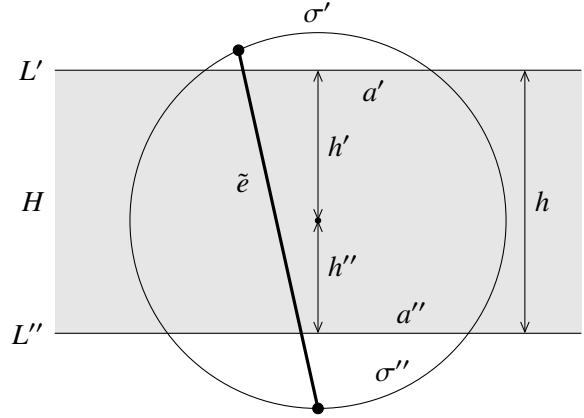

We  claim that the origin is contained in $H$.
Let $h'$ and $h''$ be the distance from the origin to $a'$ and to $a''$ respectively.
Observe that
$$
R^2-{h'}^2 = \frac{|a'|^2}{4} < \frac{A}{4\alpha^2},
$$
which implies
\begin{equation}\label{ineq:long:cyl:h1}
R-h' < \frac{A}{4\alpha^2(R+h')} < \frac{A}{4\alpha^2 R} < \frac{A}{2\alpha^2 h},
\end{equation}
(here we used \eqref{ineq:long:cyl:R}). By the same argument, we also get
\begin{equation}\label{ineq:long:cyl:h2}
R-h'' < \frac{A}{2\alpha^2 h}.
\end{equation}
By construction, $h$ is the distance between $L'$ and $L''$. If $H$ does not contain the origin, then
$$
h=|h'-h''|\leq \max\{R-h',R-h''\}< \frac{A}{2\alpha^2 h}
$$
which implies
$$
h < \frac{\sqrt{A}}{\sqrt{2}\alpha}.
$$
Since $\alpha > \frac{1}{\sqrt{2}\alpha}$, we get a contradiction to the hypothesis that $C$ is a long cylinder.
Thus $0$ must be contained in $H$, and we have $h=h'+h''$.

\medskip

Since $C$ is horizontal, the orthonormal basis $(v_1,v_2)$ is actually  the canonical basis of $\R^2$.
Let $v(\tilde{e}) \in \R^2$ be vector associated to $\tilde{e}$.
By definition, $v(\tilde{e})$ is the period vector of $e$.
A consequence of the fact that $0$ is contained in $H$ is that the  horizontal component of $v(\tilde{e})$ has length at most $\max\{|a'|,|a''|\}$.
Since $\max\{|a'|,|a''|\} \leq \ell$,  (i) follows.

\medskip

It follows from \eqref{ineq:long:cyl:h1} and \eqref{ineq:long:cyl:h2} that
$$
2R-h= (R-h')+(R-h'') < \frac{A}{\alpha^2 h} \Leftrightarrow 2R < h+\frac{A}{\alpha^2 h}.
$$
Since $|e|\leq 2R$, we get that
$$
|e| < h+\frac{A}{\alpha^2 h} < h+\frac{\sqrt{A}}{\alpha^3},
$$
and (ii) follows.

\medskip

Let $C'$ be another long cylinder. By Lemma~\ref{lm:long:cyls:disjoint}, we know that $C$ and $C'$ are disjoint.
Since $e$ crosses both $C$ and $C'$, we would have $|e| > h+\alpha\sqrt{A}$, which implies
$$
\alpha\sqrt{A} < \frac{\sqrt{A}}{\alpha^3}.
$$
Since $\alpha >1$, this impossible.
The same argument shows that $e$ can only cross $C$ once, and (iii) is proved.
\end{proof}

Now, let $(X,q)$ be an element of $\strate$. Fix a homeomorphism $f: (M,\Sig) \ra (X,Z(q))$, and let $(\hat{M},\hat{\Sig})$ and $T_\zeta$ be as in the previous sections.
We endow $M$ and $\hat{M}$ with the flat metrics induced by $q$ via $f$ and $\pi\circ f$. By definition, $T_\zeta$ is an isometry of $(\hat{M},\hat{\Sig})$ of order $d$.
As a direct consequence of the construction of Delaunay triangulation, we get
\begin{Proposition}\label{prop:inv:Delaunay:tria}
There is a Delaunay triangulation of $(\hat{M},\hat{\Sig})$ that is invariant by $T_\zeta$.
 \end{Proposition}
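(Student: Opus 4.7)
The plan is to attack the problem in two steps: (1) show that the full Delaunay decomposition of $(\hat{M},\hat{\Sig})$ is already $T_\zeta$-invariant, and (2) refine this decomposition into a triangulation equivariantly. Step (1) is automatic: because $\tau^*\hat{\omega}=\zeta\hat{\omega}$ with $|\zeta|=1$, the homeomorphism $T_\zeta$ is an isometry of the flat metric on $\hat{M}$; since it also permutes $\hat{\Sig}$, it preserves the distance function $\dd(\,\cdot\,,\hat{\Sig})$ and therefore permutes the Voronoi $i$-cells of $(\hat{M},\hat{\Sig})$ for each $i=0,1,2$. Dualizing gives the $T_\zeta$-invariance of the Delaunay decomposition.

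The heart of the argument is step (2), and it turns on the claim that the induced $T_\zeta$-action on the set of Delaunay $2$-cells is \emph{free}. Each Delaunay $2$-cell is, by construction, the image $\phi_s(H_s)$ inside a unique disc $\phi_s(D(0,\dd_s))$, so it both determines and is determined by its Voronoi $0$-cell $s$; thus freeness on $2$-cells reduces to freeness on Voronoi $0$-cells. Now $T_\zeta$ generates the deck group of the cyclic Galois covering $\pi:\hat{M}\to M$, whose branch locus lies inside $\hat{\Sig}$; hence every non-trivial power $T_\zeta^k$ (with $k\not\equiv 0\pmod d$) acts freely on $\hat{M}\smin\hat{\Sig}$. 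Finally, no point of $\hat{\Sig}$ is itself a Voronoi $0$-cell, because at such a point the constant path is the unique length-minimizing path to $\hat{\Sig}$, leaving fewer than three minimizers. Combining these observations, every Voronoi $0$-cell has trivial $T_\zeta$-stabilizer, as desired.

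Granted freeness, the triangulation is then produced as follows: fix a fundamental domain $\mathcal{F}$ for the $T_\zeta$-action on Delaunay $2$-cells, triangulate each non-triangular $P\in\mathcal{F}$ by any choice of diagonals between its vertices (these are automatically saddle connections because $P$ embeds convexly in a Euclidean disc with vertices in $\hat{\Sig}$), and propagate the choice via $T_\zeta^k$ to the other cells in the orbit of $P$. The resulting diagonals, together with the $T_\zeta$-invariant set of Delaunay $1$-cells, define the required $T_\zeta$-invariant Delaunay triangulation. I expect the main obstacle to be precisely the freeness claim in step (2): a Delaunay $2$-cell stabilized by some non-trivial $T_\zeta^k$ would be a polygon with a cyclic rotational symmetry of order $\geq 2$, and for symmetry of order $\geq 3$ such a polygon admits no equivariant triangulation using only its boundary vertices. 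Ruling out this pathology via the branch-locus argument is the key geometric input, and without it one would be forced either to introduce new vertices or to give up the saddle-connection requirement.
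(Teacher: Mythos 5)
Your proposal is correct and follows essentially the same route as the paper: invariance of the Voronoi (hence Delaunay) decomposition because $T_\zeta$ is an isometry preserving $\hat{\Sig}$, freeness of the action on Delaunay $2$-cells deduced from freeness of $T_\zeta$ on $\hat{M}\smin\hat{\Sig}$, and then an equivariant subdivision obtained by triangulating one representative per orbit and propagating by $T_\zeta$. Your write-up is in fact slightly more detailed than the paper's, which leaves implicit the reduction of freeness on $2$-cells to freeness on Voronoi $0$-cells that you spell out.
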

 \begin{proof}
 Since $\Sig$ is invariant under $T_\zeta$, the Voronoi decomposition of $(\hat{M},\hat{\Sig})$ is invariant under $T_\zeta$. In particular the set of Voronoi $0$-cells is invariant under $T_\zeta$. It follows that $T_\zeta$ maps a Delaunay $2$-cell onto a Delaunay $2$-cell. Thus the Delaunay decomposition is invariant under $T_\zeta$.

 Since $T_\zeta$ does not have fixed points in $\hat{M}\setminus\hat{\Sig}$, it acts freely on the set of Delaunay $2$-cells. Pick a representative of each $T_\zeta$-orbit of Delaunay $2$-cells, and subdivide it into triangles, we get a subdivision of all the Delaunay $2$-cells into triangles by applying $T_\zeta$. Thus, we have constructed a Delaunay triangulation invariant by $T_\zeta$.
 \end{proof}


\section{Finiteness of the volume of $\Pb\strate$}\label{sec:proof:finite}
Recall that by Corollary~\ref{cor:P:bijection}, we can identify $\strate$ with $\lstrate$.
Let $\ilstrate$ be the subset of $\lstrate$ consisting of elements  $(\hat{X},\hat{\omega},\tau)$ such that the area of the flat surface defined by $\hat{\omega}$ is at most $1$.
By definition, we have
$$
\vol_1(\Pb\strate)=\vol(\ilstrate).
$$
Theorem~\ref{thm:main} will follow from
\begin{Theorem}\label{th:vol:finite:a:diff}
The volume of $\ilstrate$ with respect to $d\vol$ is finite.
\end{Theorem}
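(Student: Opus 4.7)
The plan is to adapt the strategy of Masur--Smillie \cite{MS91}, covering $\ilstrate$ by finitely many pieces indexed by combinatorial types of $T_\zeta$-invariant Delaunay triangulations and bounding each separately. By Proposition~\ref{prop:inv:Delaunay:tria} every $(\hat{X},\hat{\omega},\tau)\in\ilstrate$ admits such a triangulation $\hat{\Tcal}$ of $(\hat{M},\hat{\Sig})$, and because $\hat{\Tcal}$ has the fixed number $4\hat{g}-4+2\hat{n}$ of triangles together with a prescribed $T_\zeta$-action, only finitely many combinatorial types $[\hat{\Tcal}]$ occur. Fix one, and let $\Omega([\hat{\Tcal}])$ be the corresponding subset. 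By Lemma~\ref{lm:sys:S:eigen:sp} and Proposition~\ref{prop:eigen:sp:loc:coord} the edge-period map $\Psi:\Omega([\hat{\Tcal}])\to V_\zeta$ is locally biholomorphic and finite-to-one (the fibres are bounded by orders of automorphism groups, uniformly bounded on the stratum). Since $\Theta$ is translation-invariant on $V_\zeta$, it is enough to bound the Lebesgue measure of $\Psi(\Omega([\hat{\Tcal}]))$, which is cut out by triangle-positivity, the Delaunay inequalities, and the area constraint $\Aa(\hat{\omega})\leq 1$, a positive-definite quadratic form in the edge periods.

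Next I would split the edges of $\hat{\Tcal}$ into short edges ($|e|\leq\alpha:=2\sqrt{2/\pi}$) and long edges. By Proposition~\ref{prop:long:sc:high:cyl} and Lemma~\ref{lm:sc:cross:long:cyl}(iii), each long edge crosses exactly one long cylinder of $(\hat{X},\hat{\omega})$; by Lemma~\ref{lm:long:cyls:disjoint} long cylinders are disjoint and are permuted by $T_\zeta$. Only finitely many assignments of long/short to edges, together with a grouping of long edges by the cylinder they cross, can occur; fix one. Let $C_1,\dots,C_k$ be representatives of the $T_\zeta$-orbits of long cylinders, with circumferences $\ell_j$, heights $h_j$, and core-curve directions $e^{\imath\theta_j}$. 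I would pick a basis of $V_\zeta$ defined over $\Q(\zeta)$ consisting of $\zeta$-eigenclasses of: a core curve of each $C_j$, contributing the period $\ell_je^{\imath\theta_j}$; one representative edge crossing $C_j$, of period $h_je^{\imath(\theta_j+\pi/2)}+x_je^{\imath\theta_j}$ with $|x_j|\leq\ell_j$ (Lemma~\ref{lm:sc:cross:long:cyl}(i)); and $\zeta$-eigencombinations of the short edges. The Jacobian to the original edge-period basis is a nonzero constant depending only on $[\hat{\Tcal}]$. In these coordinates $\Aa(\hat{\omega})$ dominates $\sum_j\ell_jh_j$, so after integrating the short-edge coordinates over a bounded region, the $\theta_j$ over $[0,2\pi)$, and the $x_j$ over $[-\ell_j,\ell_j]$, the remaining integral has the form $\int\prod_j\ell_j\,d\ell_j\,dh_j$ over $\{\ell_j,h_j>0,\ h_j\geq\alpha,\ \sum_j\ell_jh_j\leq 1\}$. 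This is finite: fixing the $h_j$, the $\ell$-slice has volume $O((\prod_j h_j)^{-1})$, and then $\int_\alpha^\infty h^{-2}\,dh<\infty$ for each factor.

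The main obstacle is the construction in Step~4: arranging a $T_\zeta$-equivariant basis of $V_\zeta$ adapted to the long-cylinder decomposition. Because $T_\zeta$ only permutes the long cylinders rather than preserving each individually, one must work with orbit representatives, and the cylinder data $(\ell_j,h_j,\theta_j,x_j)$ must be recovered from the $\zeta$-eigenclasses with a Jacobian uniformly bounded in the geometry. Verifying this---and checking that the area form remains quadratic-positive with the cylinder-area terms $\ell_jh_j$ cleanly visible up to bounded short-edge corrections---is where the argument requires most care beyond the classical Masur--Smillie setting; once this is done, the finiteness estimate in the last paragraph is essentially routine.
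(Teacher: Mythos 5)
Your overall strategy coincides with the paper's: cover $\ilstrate$ by finitely many pieces indexed by $T_\zeta$-invariant Delaunay triangulations together with the combinatorial data of which dual cycles carry long cylinders, use Proposition~\ref{prop:long:sc:high:cyl} and Lemmas~\ref{lm:long:cyls:disjoint}, \ref{lm:sc:cross:long:cyl} to control long edges, and conclude by a Fubini-type estimate. The difficulty is exactly the one you flag in your last paragraph, and your proposed resolution of it does not work as stated. You want a basis of $V_\zeta^*$ containing the core-curve classes $\g_{10},\dots,\g_{k0}$, so that $(\ell_j,\theta_j)$ become independent polar coordinates. But disjoint long cylinders can have homologous core curves (two disjoint non-separating curves cobounding a subsurface already give this in the classical $d=1$ setting), and even when the $\g_{j0}$ are independent in $H_1(\hat{M},\Z)$ their restrictions to $V_\zeta$ need not be; so the functionals $z\mapsto z(\g_{j0})$ need not extend to a basis, and your change of variables with ``a nonzero constant Jacobian'' is not available in general.

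The paper's Lemma~\ref{lm:fam:sim:cyc} circumvents this by never using the core curves as coordinates. It takes one crossing edge $e_i$ per $T_\zeta$-orbit of long cylinders — their independence in $W_o^*$ is proved by pairing against the eigencycles $\eta_i=\sum_j\zeta^{-j}\g_{ij}$, which is where the equivariance issue you worry about is actually handled — and completes them with $N-k$ edges taken from $E^*_{\tilde{\g}}$, i.e.\ edges crossing no long cylinder, hence of length at most $\sqrt{2}\alpha$. Part (b) of that lemma shows the core-curve periods $z(\g_{ij})$ are linear functions of these short coordinates alone. Then in Lemma~\ref{lm:vol:fin} the short coordinates range over a bounded polydisk, and for each fixed value the crossing-edge coordinate is confined to a rectangle of area $2\ell_i\cdot\frac{4}{\ell_i}=8$ (the bound $|y_i|<2/\ell_i$ coming from $h_i\ell_i\le\Aa\le 1$), so no explicit $\int\prod_j\ell_j\,d\ell_j\,dh_j$ computation is needed and no independence of the $\g_{j0}$ is ever used. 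If you want to salvage your parametrization you would have to pass to a maximal independent subfamily of core curves and re-do the bookkeeping; the paper's choice of coordinates makes this unnecessary.
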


To prove Theorem~\ref{th:vol:finite:a:diff} we will use a similar strategy to the one introduced by Masur and Smillie in \cite[Sect. 10]{MS91}. Namely, we will cover $\lstrate$ by  a finite family of open subsets arising from the Delaunay triangulations. We then show that the intersection of each open in this family with the set $\ilstrate$ has  finite volume.

\subsection{Invariant triangulations}\label{sec:encode:tria}
Let $\hat{M}$ be a compact oriented topological surface of genus $\hat{g}$, and $\hat{\Sig}$ a subset of cardinality $\hat{n}$ of $\hat{M}$. We assume that there is homeomorphism $T$ of $\hat{M}$ of order $d$ preserving the set $\hat{\Sig}$  such that the group $\langle T \rangle$ acts freely on $\hat{M}\setminus\hat{\Sig}$.

Let $\hat{\Tcal}$ be a triangulation of $\hat{M}$ whose vertex set is equal to $\hat{\Sig}$. Let $N_1$ and $N_2$ be the number of edges and triangles of $\hat{\Tcal}$ respectively. Note that  $N_1,N_2$ are completely determined by $(\hat{g},\hat{n})$. Namely,
$$
N_1=3(2\hat{g}-2+\hat{n}) \text{ and } N_2=2(2\hat{g}-2+\hat{n}).
$$
We assume further that $\hat{\Tcal}$ is invariant under $T$.

The triangulation $\hat{\Tcal}$ can be encoded by a graph $G$ as follows: the vertex set $V$ of $G$ has cardinality $\hat{n}$, the edges of $G$ are oriented and the set of edges $E$ has cardinality $2N_1$. There is a natural pairing on $E$, each pair consists of two directed edges that correspond to the same edge of $\hat{\Tcal}$ with inverse orientations. We  encode this pairing by a permutation $\sig_0$ on $E$ whose every cycle has length $2$. 
At each vertex $v\in V$, we have a cyclic ordering on the set of directed edges pointing out from $v$.  This ordering is induced by the orientation  of $\hat{M}$.  The cyclic orderings at the vertices of $G$ is encoded by a permutation $\sig_1$ on $E$.
In the literature, the graph $G=(V,E)$ together with the permutations $\sig_0,\sig_1$ as above is called a {\em map} (see \cite{LZ}).

The permutations $\sig_0,\sig_1$ induce another permutation $\sig_2$ on $E$ as follows: let $e$ be a directed edge pointing out from $v$. Let $v'$ be the other endpoint of $e$, and $\bar{e}$ be the edge originated from $v'$ that is paired with $e$. Then $\sig_2(e)$ is the edge that precedes $\bar{e}$ in the cyclic ordering at $v'$. The cycles of $\sig_2$ are called the {\em faces} of $G$.
The set of faces of $G$ will be denoted by $F$. 
Since $\hat{\Tcal}$ is a triangulation, all cycles of $\sig_2$ have length equal to $3$.
Note that the map $G=(V,E,F,\sig_0,\sig_1)$ completely determines $(\hat{M},\hat{\Sig})$ and $\hat{\Tcal}$ up to isomorphism, and $T$ corresponds to  an automorphism of $G$ of order $d$ acting freely on $E$ and $F$.

\begin{Definition}\label{def:graph:inv:triang}
Let $\Alp$ denote the set of pairs $(G,T)$, where $G=(V,E,F,\sig_0,\sig_1)$ is a map (in the sense indicated above)  whose geometric realization is homeomorphic to a triangulation of $\hat{M}$ with vertices in $\hat{\Sig}$, and $T$ is an automorphism of order $d$ of $G$ acting freely on $E$ and $F$.
Elements of $\Alp$ will be called {\em invariant triangulations} of $(\hat{M},\hat{\Sig})$.
Since the cardinalities of $V,E$, and $F$ are all fixed, $\Alp$ is clearly a finite set.
\end{Definition}

\subsection{Invariant triangulations and locally homeomorphic maps to $\lstrate$}\label{sec:opens:n:inv:tria}\hfill\\
Given $o=(G,T) \in \Alp$, where $G=(V,E,F,\sig_0,\sig_1)$, there is an associated linear subspace $W_o \subset \C^{2N_1}$ which is defined by the following system of linear equations (we identify $z\in \C^{2N_1}$ with a map $z: E \ra \C$)
$$
(\Scal_o) \quad
\left\{
\begin{array}{ll}
z(\sig_0(e))=-z(e), & \forall e \in E, \\
z(e)+z(e')+z(e'')=0, & \forall (e,e',e'') \text{ face of } G,\\
z(T(e))=\zeta z(e), & \forall e \in E.
\end{array}
\right.
$$
Let $\theta=(e,e',e'') \in F$ be a face of $G$. Define
$$
\begin{array}{cccc}
\Ab_\theta: & \C^E & \ra & \R \\
       &  z & \mapsto & \frac{-\imath}{2}(\ol{z(e)}z(e')-z(e)\ol{z(e')})
\end{array}
$$
and
$$
\Ab(z)=\sum_{\theta\in F} \Ab_\theta(z).
$$
Note that  $\Ab_\theta(z)>0$ if and only if $(z(e),z(e'),z(e''))$ are the sides of a non-degenerate Euclidian triangle in the plane, and the  counter-clockwise orientation on the border of this triangle agrees with the orientation of $e,e',e''$. We denote by $\tilde{U}_o$ the open subset of $W_o$ which is defined by
$$
\tilde{U}_o:=\{z \in W_o, \; \Ab_\theta(z) >0, \text{ for all } \theta\in F\}.
$$
Given $z\in \tilde{U}_o$, we can construct a translation surface from $z$ as follows: to any face $\theta=(e,e',e'')$ of $G$, we have an associated triangle whose sides are $(z(e),z(e'),z(e''))$. Gluing those triangles together, using the identification given by $\sig_0$, gives us an oriented surface $\hat{M}_z$ endowed with a flat metric with conical singularities. Note that the linear parts of the holonomies of this metric structure are always $\id_{\R^2}$. Therefore, $\hat{M}_z$ is actually a translation surface.
By construction, $\hat{M}_z$ naturally carries a triangulation by geodesics whose $1$-skeleton is the geometric realization of $G$.
This triangulation will be denoted by $\hat{\Tcal}_z$.

By the definition of $\tilde{U}_o$, we have $z(T(e))=\zeta z(e)$ for any edge $e\in E$ and $z\in \tilde{U}_o$.
Since $T$ is an automorphism of $G$, if $\theta=(e,e',e'')$ is a face of $G$, then so is  $T(\theta)=(T(e),T(e'),T(e''))$.
Let $\Delta$ and $\Delta'$ be the triangles associated with $\theta$ and $T(\theta)$ respectively.
Then $\Delta'=\zeta\cdot \Delta$, here we view $\zeta$ as an element of $\mathrm{SO}(2,\R) \simeq \S^1$.
This property implies that the action of $T$ on $\hat{\Tcal}_z$ extends to an isometry of the surface $\hat{M}_z$.
Let $(\hat{X}_z,\hat{\omega}_z)$ be the holomorphic $1$-form that defines $\hat{M}_z$. Then $T$ is given by an automorphism $\tau_z$ of $\hat{X}_z$ which satisfies $\tau_z^*\hat{\omega}_z=\zeta\hat{\omega}_z$. Define $X_z:=\hat{X}_z/\langle \tau_z \rangle$. Let $\varpi_z: \hat{X}_z \ra X_z$ be the natural projection. Since $\tau_z^*\hat{\omega}_z^d=\hat{\omega}^d_z$, there is a (meromorphic) $d$-differential $q_z$ on $X_z$ such that $\varpi^*_zq_z=\hat{\omega}^d$.

Define $U_o$ to be the set of $z \in \tilde{U}_o$ such that $(X_z,q_z) \in \strate$.
The correspondence $z \in U_o \mapsto (\hat{X}_z,\hat{\omega}_z,\tau_z)$  defines   a map $\Psi_o: U_o \ra \lstrate$.

\begin{Proposition}\label{prop:triangulation:fin:chart}
The  map $\Psi_o: U_o \ra \lstrate$ is locally homeomorphic.
The union of $\{\Psi_o(U_o), \; o \in \Alp\}$ covers $\lstrate$.
\end{Proposition}
\begin{proof}
%
Let $o=(G,T)$ be as above.
The triangulation $\hat{\Tcal}$ endows $\hat{M}$ with a $\Delta$-complex structure. Thus the system
$$
(\Scal'_o) \qquad \left\{
\begin{array}{ll}
   z(\sig_0(e)) =- z(e) & \text{ for all } e \in E \\
   z(e)+z(e')+z(e'')=0 & \text{ for all } (e,e',e'') \text{ face of } G
 \end{array}
\right.
$$
defines $H^1_{\hat{\Tcal}}(\hat{M},\hat{\Sig},\C)\simeq H^1(\hat{M},\hat{\Sig},\C)$. Consequently, solutions of the system $(\Scal_o)$ represent elements of the space $V_\zeta=\ker(T-\zeta\Id) \subset H^1(\hat{M},\hat{\Sig},\C)$.
Since $\lstrate$ is locally identified with $V_\zeta$, we conclude that the map $\Psi_o$ is locally homeomorphic.

Let $(\hat{X},\hat{\omega},\tau) \in \lstrate$. By Proposition~\ref{prop:inv:Delaunay:tria}, there are  Delaunay triangulations of $(\hat{X},Z(\hat{\omega}))$, where $Z(\hat{\omega})$ is the zero set of $\hat{\omega}$, that are invariant under $\tau$. It follows that $(\hat{X},\hat{\omega},\tau) \in \Psi_o(U_o)$ for some $o\in \Alp$. Hence, $\lstrate$ is covered by $\{\Psi_o(U_o), \; o \in \Alp\}$.
\end{proof}

\subsection{Long cylinders and simple cycles}\label{sec:encode:long:cyl}
Let $o=(G,T)$ be an element of $\Alp$, where $G=(V,E,F,\sig_0,\sig_1)$.
Recall that the geometric realization of $G$ is a triangulation $\hat{\Tcal}$ of the topological surface $\hat{M}$ whose vertex set is equal to $\hat{\Sig}$.

Let $\hat{\Tcal}^\vee$ denote the dual graph of $\hat{\Tcal}$.
The automorphism $T$ of $\hTcal$ induces an automorphism of order $d$ on $\hat{\Tcal}^\vee$ that we will abusively denote by $T$.
We will call the image of an injective continuous map from $\S^1$ to $\hat{\Tcal}^\vee$ a {\em simple cycle}  of $\hat{\Tcal}^\vee$.
A simple cycle of $\hat{\Tcal}^\vee$ corresponds to a simple closed curve on $\hat{M}$ which intersects each edge of $\hat{\Tcal}$ at most once.
Since any simple cycle is uniquely determined by the set of edges it contains,  the set of simple cycles in $\hat{\Tcal}^\vee$ is finite.

\begin{Lemma}~\label{lm:Delaunay:long:cyl:finite}
Let $(\hat{X},\hat{\omega},\tau)$ be an element of $\lstrate$, and $\hat{f}: (\hat{M},\hat{\Sig}) \to (\hat{X},\hat{Z}(\hat{\omega}))$ a homeomorphism.
Assume that $\hat{\Tcal}$ is the pullback of a Delaunay triangulation of $\hat{X}$ via $\hat{f}$.
If $c$ is a simple closed curve on $\hat{M}$ such that $\hat{f}(c)$ is a core curve of a long cylinder on $\hat{X}$ (see Def.~\ref{def:long:cyl}),
then $c$ is dual to a simple cycle in $\hat{\Tcal}^\vee$. In particular, the homotopy class of $c$ belongs to a finite family
\end{Lemma}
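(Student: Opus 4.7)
First, the plan is to transport the flat structure on $\hat X$ back to $\hat M$ via $\hat f$, so that $\hat f(c)$ becomes a closed horizontal geodesic $c$ inside a flat cylinder, still denoted $C$, on $\hat M$. The main task will then be to read off, along $c$, the cyclic list of edges and triangles of $\hat{\Tcal}$ that $c$ traverses, and to verify that this list forms a simple cycle in the dual graph $\hat{\Tcal}^\vee$.

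Second, I would show that $c$ crosses each edge of $\hat{\Tcal}$ at most once. Since $c\subset C$ whereas the endpoints of any edge of $\hat{\Tcal}$ lie in $\hat{\Sig}\subset\hat M\smin C$, any edge $e\in\hat{\Tcal}^{(1)}$ that meets $c$ must cross $C$; by Lemma~\ref{lm:sc:cross:long:cyl}(iii) the intersection $e\cap C$ is then a single geodesic chord joining the two boundary components of $C$. Lifting $C$ to its universal cover $\R\times[0,h]$, this chord is a straight non-horizontal segment from $\{v=0\}$ to $\{v=h\}$, while $c$ lifts to a horizontal line $\{v=v_0\}$; the two meet in a unique transverse point.

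Third, I would list the edges crossed by $c$ in cyclic order as $e_1,\dots,e_k$, and let $\theta_i$ denote the triangle of $\hat{\Tcal}$ containing the arc of $c$ between the crossings of $e_i$ and $e_{i+1}$ (indices modulo $k$). The alternating sequence $(\theta_0,e_1,\theta_1,\dots,e_k,\theta_0)$ is then a closed walk in $\hat{\Tcal}^\vee$, with pairwise distinct edges by the previous step. For simplicity, I need the $\theta_i$ to be pairwise distinct: if $\theta_i=\theta_j$ with $i\neq j$, then the four edges $e_i,e_{i+1},e_j,e_{j+1}$ all lie on the boundary of the same triangle, which has only three sides, so after cyclic relabeling $j=i+1$ and both sides of $e_{i+1}$ sit in the same triangle; but a straight closed geodesic in a Euclidean triangle cannot re-enter immediately through the edge by which it exited (equivalently, one may subdivide any degenerate Delaunay triangle beforehand to exclude this configuration). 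Hence the walk is a simple cycle in $\hat{\Tcal}^\vee$ dual to $c$.

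Finally, for the finiteness conclusion, the pair $(G,T)$ given by the combinatorial type of $\hat{\Tcal}$ together with the action induced by $T_\zeta$ is an element of the finite set $\Alp$ defined in Section~\ref{sec:encode:tria}, and for each $(G,T)\in\Alp$ the dual graph carries only finitely many simple cycles, each determining a single free homotopy class of simple closed curve in $\hat M$; taking the union over $\Alp$ gives the claimed finite family. The main obstacle in this plan is the single-intersection claim in the second step, whose proof rests on combining the geometric information packaged in Lemma~\ref{lm:sc:cross:long:cyl}(iii) with a careful analysis of $c$ and of the chord $e\cap C$ in the universal cover of the long cylinder.
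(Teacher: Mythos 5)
Your proposal follows essentially the same route as the paper: the whole content of the lemma is the claim that the core curve meets each Delaunay edge at most once, and both you and the paper derive this from Lemma~\ref{lm:sc:cross:long:cyl}(iii) (the paper's proof is exactly that one line, leaving the passage from ``each edge crossed at most once'' to ``simple dual cycle'' implicit). The extra combinatorial detail in your third step reaches the right conclusion, but the cleanest justification that the triangles $\theta_i$ are pairwise distinct is a count --- a triangle has only three sides, each crossed at most once, so it contains at most one arc of $c$ --- rather than your appeal to a geodesic not re-entering through the edge it exited by, which is not literally valid when the two sides of an edge are glued to the same triangle of the $\Delta$-complex.
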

\begin{proof}
By Lemma~\ref{lm:sc:cross:long:cyl}(iii), each edge of a Delaunay triangulation  of $\hat{X}$ crosses $f(c)$ at most once. Thus each edge of $\hat{\Tcal}$ crosses $c$ at most once.
\end{proof}


Given a simple cycle $\g$ in  $\hat{\Tcal}^\vee$, we denote by $F_\g$ the set of faces of $G$ (triangles of $\hat{\Tcal}$) that are dual to the vertices of $\hat{\Tcal}^\vee$ contained in $\g$.
Let $\hat{\Tcal}^{(1)}_\g$ denote the set of edges of $\hat{\Tcal}$ that are dual to the edges of $\hat{\Tcal}^\vee$ contained in $\g$.
Each edge of $\hat{\Tcal}$ is a pair of edges in $E$ that are permuted by $\sig_0$,  let $E_\g$ denote the set of edges in $E$ corresponding to the (undirected) edges in $\hat{\Tcal}^{(1)}_\g$.  Let $E'_\g$ be the subset of $E$ consisting of edges that are incident to some face in $F_\g$ but not contained in $E_\g$.

Recall that $W_o$ is the subspace of $\C^{2N_1}\simeq \C^{E}$ consisting of solutions to the system $(\Scal_o)$.
As a preparation to the proof of Theorem~\ref{th:vol:finite:a:diff}, we investigate the relations of coordinates $\{z(e), \, e \in E_\g\cup E'_\g\}$ for $z\in W_o$.

\begin{Lemma}\label{lm:sim:cycle:rel:1}
Let $z: E \ra \C$ be a vector in $W_o$.
\begin{itemize}
\item[(a)] Let $e_1,e_2$ be two edges in $E_\g$. Then either $z(e_1)+z(e_2)$ or $z(e_1)-z(e_2)$ is a linear combination of $\{z(e), \, e \in E'_\g\}$.

 \item[(b)] Consider $z$ as an element of $H^1(\hat{M},\hat{\Sig},\C)$, and identify $\g$ with a simple closed curve of $\hat{M}$, then $z(\g)$ is a combination of $\{z(e), \; e \in E'_\g\}$.
\end{itemize}
\end{Lemma}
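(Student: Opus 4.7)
My plan is to parametrize the combinatorics along $\g$ by labeling the triangles in $F_\g$ and their directed sides, extract a telescoping identity from the defining relations of $W_o$, and then deduce (a) by direct manipulation and (b) by a homological argument.

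First I would enumerate the faces in $F_\g$ as $\theta_0,\dots,\theta_{k-1}$ in the cyclic order induced by $\g$, write $a_i$ for the edge of $\hat{\Tcal}$ shared between $\theta_i$ and $\theta_{i+1}$ (indices mod $k$) and $b_i$ for the third side of $\theta_i$, and name the CCW-directed sides of $\theta_i$ by $\alpha_{i,i-1},\alpha_{i,i},\beta_i\in E$ (corresponding to $a_{i-1},a_i,b_i$ respectively). Under this notation $E_\g=\{\pm\alpha_{i,i}:0\le i<k\}$ and $E'_\g=\{\beta_0,\dots,\beta_{k-1}\}$. The face equation of $(\Scal_o)$ gives $z(\alpha_{i,i-1})+z(\alpha_{i,i})+z(\beta_i)=0$, and the pairing equation applied to $\sig_0(\alpha_{i,i-1})=\alpha_{i-1,i-1}$ (opposite CCW orientations of the shared edge $a_{i-1}$) gives $z(\alpha_{i,i-1})=-z(\alpha_{i-1,i-1})$. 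Combining these yields the telescoping identity
\[
z(\alpha_{i,i})-z(\alpha_{i-1,i-1})=-z(\beta_i),\qquad 0\le i<k.
\]

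Part (a) then follows by chaining this identity. Writing each $e\in E_\g$ as $\varepsilon(e)\,\alpha_{i(e),i(e)}$ for a sign $\varepsilon(e)\in\{\pm 1\}$, and setting $e_s=\varepsilon_s\alpha_{i_s,i_s}$ for $s=1,2$, the combination
\[
z(e_1)-\varepsilon_1\varepsilon_2\,z(e_2)=\varepsilon_1\bigl(z(\alpha_{i_1,i_1})-z(\alpha_{i_2,i_2})\bigr)
\]
becomes a telescoping sum of $-z(\beta_\ell)$'s, hence lies in $\Span\{z(e):e\in E'_\g\}$. This is $z(e_1)-z(e_2)$ when $\varepsilon_1\varepsilon_2=+1$ and $z(e_1)+z(e_2)$ when $\varepsilon_1\varepsilon_2=-1$, proving (a).

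For (b), I would consider the $2$-chain $A=\sum_i\theta_i\in C_2(\hat{M})$. The pairing forces $\sum_i\alpha_{i,i-1}=-\sum_i\alpha_{i-1,i-1}=-\sum_i\alpha_{i,i}$, so the $\alpha$-terms in $\partial A=\sum_i(\alpha_{i,i-1}+\alpha_{i,i}+\beta_i)$ cancel and $\partial A=\sum_i\beta_i$ as a $1$-chain. Geometrically, the support $|A|=\bigcup_i\theta_i$ is an annular neighborhood of the simple closed curve $\g$ in $\hat{M}$, whose topological boundary is a disjoint union of at most two simple closed curves, each a concatenation of some of the $\beta_i$'s and each homologous through $|A|$ to $\g$. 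Therefore $[\g]\in H_1(\hat{M},\hat{\Sig};\Z)$ lies in the $\Z$-span of $\{\beta_i\}_{0\le i<k}$, which yields $z(\g)\in\Span\{z(e):e\in E'_\g\}$.

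The main obstacle is making the geometric step of (b) rigorous in the degenerate case $|A|=\hat{M}$ (e.g.\ the torus with $k=2$), where $|A|$ has no topological boundary to compare $\g$ with. I would address this by working directly in the cellular quotient $C_1(|A|)/\partial C_2(|A|)$, using the telescoping identity together with the cyclic relation $\sum_i\beta_i=0$ (obtained by completing one full cycle) to pin down its structure, and then verifying by a careful simplicial approximation of $\g$ (pushing each arc $\g\cap\theta_i$ onto two sides of $\theta_i$ and tracking signs) that the class of $\g$ in this quotient has no $\alpha_{0,0}$-component and therefore still lies in the $\Z$-span of the $\{\beta_i\}$.
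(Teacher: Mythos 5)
Your proof is correct and follows essentially the same route as the paper, which simply glues the triangles of $F_\g$ along the identifications given by $E_\g$ into an annulus whose boundary consists of edges in $E'_\g$: your telescoping identity is the explicit algebraic form of the statement that each edge of $E_\g$ joins the two boundary circles, and your computation $\partial A=\sum_i\beta_i$ is the explicit form of (b). The degenerate case you worry about in (b) disappears if, as in the paper, you form the annulus \emph{abstractly} (gluing the triangles in $F_\g$ only along the $E_\g$-identifications) and push the homology between its core curve and a boundary circle forward to $\hat{M}$, rather than reasoning about the image $|A|\subset\hat{M}$.
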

\begin{proof}
 Glue the triangles in $F_\g$ together using the identification of the edges in $E_\g$, we obtain an annulus whose boundary is composed by edges in $E'_\g$. Remark that any edge in $E_\g$ connects two points in different boundary components of this annulus, hence (a) follows. Since $\g$ can be realized as the core curve of this annulus, (b) follows as well.
\end{proof}

Let $\tilde{\g}:=\{\g_{ij}, \; i=1,\dots,k, j=0,\dots,d-1\}$ be a collection of simple cycles in $\hat{\Tcal}^\vee$ satisfying the following
\begin{itemize}
 \item[(i)] the cycles in $\tilde{\g}$ are pairwise disjoint,

 \item[(ii)] for each $i\in \{1,\dots,k\}, \; \g_{ij}=T^j(\g_{i0}), \; j=0,\dots,d-1$.
\end{itemize}
We will call $\tilde{\g}$ an {\em admissible family} of simple cycles in $\hat{\Tcal}^\vee$.
Define
$$
E_{\tilde{\g}}:=\bigcup_{\g_{ij}\in \tilde{\g}} E_{\g_{ij}} \qquad \text{ and } \qquad E^*_{\tilde{\g}}:=E\setminus E_{\tilde{\g}}.
$$
For $i=1,\dots,k$, pick an edge $e_i$ in $E_{\g_{i0}}$.
We will call $\{e_1,\dots,e_k\}$ an {\em admissible family of crossing edges} for $\tilde{\g}$.

Recall that $W_o$ can be identified with $\ker(T-\zeta\Id) \subset H^1(\hat{M},\hat{\Sig},\C)$.
Since $\g_{ij}$ is a cycle in $H_1(\hat{M},\hat{\Sig},\Z)$, its defines an element of $W^*_o$.
The value of this linear form at a point $z\in W_o$ will be denoted by $z(\g_{ij})$.

\begin{Lemma}\label{lm:fam:sim:cyc}
There exist $(N-k)$ edges $e_{k+1},\dots,e_N$ in $E^*_{\tilde{\g}}$ such that

\begin{itemize}
 \item[(a)] the map
$$
\begin{array}{cccc}
\Phi: & W_o & \ra & \C^N\\
       & z   & \mapsto & (z(e_1),\dots,z(e_N))
\end{array}
$$
is an isomorphism.

\item[(b)] for all $z\in W_o$ and $\g_{ij}\in \tilde{\g}$, $z(\g_{ij})$ is a linear combination of $(z(e_{k+1}),\dots,z(e_N))$.
\end{itemize}
\end{Lemma}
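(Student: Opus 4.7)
The plan is to find $e_{k+1},\dots,e_N$ through a dimension count on the restriction map $\Pi:W_o\to\C^{E^*_{\tilde\g}}$, $z\mapsto z|_{E^*_{\tilde\g}}$. Specifically, I will show $\dim\ker\Pi=k$, and then extract $N-k$ coordinate evaluations from $E^*_{\tilde\g}$ whose restrictions to $\mathrm{Im}(\Pi)$ form a basis. Both parts of the lemma will follow from this choice.

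For the upper bound $\dim\ker\Pi\le k$: an edge in $E'_{\g_{ij}}$ has a dual endpoint in $F_{\g_{ij}}$, so by the disjointness of the cycles in $\tilde\g$ it cannot have both dual endpoints in any other $F_{\g_{i'j'}}$, hence $E'_{\g_{ij}}\subset E^*_{\tilde\g}$. Lemma~\ref{lm:sim:cycle:rel:1}(a) applied to $\g_{ij}$ (with distinguished edge $T^j(e_i)\in E_{\g_{ij}}$) then shows that for $z\in\ker\Pi$ and $e\in E_{\g_{ij}}$, $z(e)=\pm z(T^j(e_i))=\pm\zeta^j z(e_i)$, so $z|_{E_{\tilde\g}}$ is determined by $(z(e_1),\dots,z(e_k))$. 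For the lower bound, I construct Poincar\'e-dual cocycles: each $c_{ij}(e):=\g_{ij}\cdot e$ is an integral cocycle supported on $E_{\g_{ij}}$ (the triangle relation holds because $\g_{ij}$ enters and exits each crossed triangle exactly once), and a direct check gives $T^*c_{i,j+1}=c_{ij}$. Setting $\tilde c_i:=\sum_{j=0}^{d-1}\zeta^j c_{ij}$ then yields $T^*\tilde c_i=\zeta\tilde c_i$, so $\tilde c_i\in V_\zeta\simeq W_o$; disjointness of the cycles forces $\tilde c_i(e_{i'})=\pm\delta_{ii'}$, so $\{\tilde c_1,\dots,\tilde c_k\}$ are linearly independent elements of $\ker\Pi$, and $\dim\ker\Pi=k$.

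With $\dim\mathrm{Im}(\Pi)=N-k$, I pick $e_{k+1},\dots,e_N\in E^*_{\tilde\g}$ whose coordinate functionals restrict to a basis of $\mathrm{Im}(\Pi)^*$. For part (a): if $\Phi(z)=0$ then $z(e_{k+1})=\dots=z(e_N)=0$ forces $\Pi(z)=0$, so $z=\sum\lambda_{i'}\tilde c_{i'}\in\ker\Pi$; evaluating at $e_i$ gives $\pm\lambda_i=0$, hence $z=0$. For part (b): the first $k$ vectors $v_1,\dots,v_k$ of the dual basis of $\{z(e_1),\dots,z(e_N)\}$ lie in $\ker\Pi$, because their $\Pi$-images belong to $\mathrm{Im}(\Pi)$ and have zero $e_{k+1},\dots,e_N$ coordinates, hence vanish. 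Lemma~\ref{lm:sim:cycle:rel:1}(b) says $z(\g_{ij})$ is a linear combination of values on $E'_{\g_{ij}}\subset E^*_{\tilde\g}$, so it factors through $\Pi$ and vanishes on $v_1,\dots,v_k$; expanding $z(\g_{ij})$ in $\{z(e_1),\dots,z(e_N)\}$, the coefficients of $z(e_1),\dots,z(e_k)$ therefore all vanish.

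The main obstacle is the construction of the $T$-equivariant intersection cocycles $\tilde c_i$. The upper bound $\dim\ker\Pi\le k$ is cheap from Lemma~\ref{lm:sim:cycle:rel:1}(a), but without the matching lower bound one cannot conclude that $\mathrm{Im}(\Pi)$ has codimension exactly $k$ in $\C^{E^*_{\tilde\g}}$, which is precisely what guarantees that $N-k$ coordinate evaluations in $E^*_{\tilde\g}$ can span $\mathrm{Im}(\Pi)^*$ and in turn lets (b) be read off from Lemma~\ref{lm:sim:cycle:rel:1}(b).
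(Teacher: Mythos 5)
Your proof is correct and follows essentially the same route as the paper's: your $T$-equivariant intersection cocycles $\tilde c_i$ are precisely the Poincar\'e duals $\eta_i=\sum_j\zeta^{-j}\g_{ij}$ that the paper uses to prove the independence of $e_1,\dots,e_k$ (Claim~\ref{clm:fam:sim:cyc:2}), and your bound $\dim\ker\Pi\le k$ is the dual formulation of Claim~\ref{clm:fam:sim:cyc:3}, both resting on the same inclusion $E'_{\g_{ij}}\subset E^*_{\tilde{\g}}$ and on Lemma~\ref{lm:sim:cycle:rel:1}(a). The only cosmetic difference is in part (b), where you factor $z(\g_{ij})$ through $\Pi$ via Lemma~\ref{lm:sim:cycle:rel:1}(b), whereas the paper evaluates $\langle\eta_m,\g_{ij}\rangle=0$ directly from the disjointness of the cycles.
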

\begin{proof}
We first show
\begin{Claim}\label{clm:fam:sim:cyc:1}
We have $E'_{\g_{ij}} \subset E^*_{\tilde{\g}}$, for all $\g_{ij}\in \tilde{\g}$.
\end{Claim}
\begin{proof}
Let $e$ be and edge in $E'_{\g_{ij}}$. By definition, $\g_{ij}$ contains a vertex of $\hat{\Tcal}^\vee$ which represents a face $\theta$ of $\hat{\Tcal}$ such that $e$ is a side of $\theta$. Two other sides of $\theta$ are dual to two edges contained in $\g_{ij}$.
Assume that there exists $(i',j')$ such that $e \in E_{\g_{i'j'}}$. Then the dual of  $e$ is contained in $\g_{i'j'}$, hence $\theta$ is contained in $\g_{i'j'}$. But  this implies that, either $\g_{ij}$ passes through $\theta$ twice (in the case $(i',j')=(i,j)$), or $\g_{i'j'}\cap \g_{ij}\neq \varnothing$ (if $(i',j')\neq (i,j)$). Since both situations are excluded by the definition of admissible family of simple cycles, we conclude that $e\in E^*_{\tilde{\g}}$.
\end{proof}

We consider the edges in $E_{\tilde{\g}}$ as elements of $H^1(\hat{M},\hat{\Sig},\C)^*$.
By restricting to $V_\zeta=\ker(T-\zeta\id)\simeq W_o$, we will get elements of $W^*_o$.

\begin{Claim}\label{clm:fam:sim:cyc:2}
The vectors $\{e_1,\dots,e_k\}$ are independent in $W^*_o$.
\end{Claim}
\begin{proof}
For each $i\in \{1,\dots,k\}$, consider the cycle
\begin{equation}\label{eq:def:inv:cycle}
\eta_i:=\sum_{j=0}^{d-1} \zeta^{-j}\g_{ij} \in H_1(\hat{M}\setminus\hat{\Sig},\C)
\end{equation}
By Poincar\'e  duality we can identify $\eta_i$ with  an element of $H^1(\hat{M},\hat{\Sig},\C)$ via the pairing $\langle .,.\rangle : H_1(\hat{M}\setminus\hat{\Sig},\C)\times H_1(\hat{M},\hat{\Sig},\C) \ra \C$. Since $T(\eta_i)=\zeta\eta_i$, we have $\eta_i\in W_o$.
For all $i'\in \{1,\dots,k\}$, we have
$$
\eta_i(e_{i'}):=\langle \eta_i, e_{i'} \rangle =\pm \delta_{ii'}.
$$
Therefore, $\{e_1,\dots,e_{k}\}$ are independent in $W^*_o$.
\end{proof}

\begin{Claim}\label{clm:fam:sim:cyc:3}
We have $W^*_o=\mathrm{Span}(\{e_1,\dots,e_k\}\cup E^*_{\tilde{\g}})$.
\end{Claim}
\begin{proof}
Given $z\in W_o$, we need to show that for any $e\in E_{\tilde{\g}}$, $z(e)$ is a combination of $\{z(e_1),\dots,z(e_k)\}$ and $\{z(e'), \; e' \in E^*_{\tilde{\g}}\}$. Assume that $e \in E_{\g_{ij}}$.
Recall that $e_i\in E_{\g_{i0}}$.
Therefore, $e'_i:=T^j(e_i)$ is an element of $E_{\g_{ij}}$.
Since $z\in W_o=\ker(T-\zeta\id)$, we have $z(e'_i)=\zeta^jz(e_i)$.
By Lemma~\ref{lm:sim:cycle:rel:1}, $z(e)$ is a combination of $z(e'_i)$ and $\{z(e'), \, e'\in E'_{\g_{ij}}\}$.
By Claim~\ref{clm:fam:sim:cyc:1}, we have $E'_{\g_{ij}}$ is contained in $ E^*_{\tilde{\g}}$.
Hence $z(e)$ is a combination of $z(e_i)$ and $\{z(e'), \; e' \in E^*_{\tilde{\g}}\}$ and the claim is proved.
\end{proof}
Claim~\ref{clm:fam:sim:cyc:2} and Claim~\ref{clm:fam:sim:cyc:3} imply that we can complete the family $\{e_1,\dots,e_k\}$  with $(N-k)$ edges $e_{k+1},\dots,e_N$ in $E^*_{\tilde{\g}}$ to obtain a basis of $W^*_o$, thus (a) follows.

\medskip

By (a) there exist $\lambda_{1},\dots,\lambda_N \in \C$, such that for all $z \in W_o$, we have
$$
z(\g_{ij})=\sum_{s=1}^N \lambda_s \cdot z(e_s).
$$
For  $m \in \{1,\dots,k\}$, let $\eta_m$ be the element of $\ker(T-\zeta\Id)$ defined by \eqref{eq:def:inv:cycle}.
For all $\g_{ij}\in \tilde{\g}$, we have $\eta_{m}(\g_{ij})=\langle \eta_{m},\g_{ij}\rangle =0$.
Thus
$$
0=\langle \eta_{m}, \g_{ij}\rangle = \sum_{s=1}^N \lambda_s\cdot\langle \eta_m, e_s\rangle =\pm \lambda_m
$$
since $e_s$ does not intersect $\eta_m$ if $s \neq m$.
Therefore, for all $z\in W_o$,  $z(\g_{ij})$ does not depend on $(z(e_1),\dots,z(e_k))$, and (b) follows.
\end{proof}

\subsection{Finiteness of the volumes of domains associated to admissible families of simple cycles}\label{sec:sim:cycles:fin:vol}
Let $\tilde{\g}:=\{\g_{ij}, \; i=1,\dots,k, j=0,\dots,d-1\}$ be an admissible family of simple cycles in $\hat{\Tcal}^\vee$, and $\{e_1,\dots,e_k\}$ an admissible family of crossing edges for $\tilde{\g}$.
As usual, we will identify $\C^{2N_1}$ with the space of functions $z: E \ra \C$.
Recall that $W_o \in \C^E$ is the space of solutions of $(\Scal_o)$, which is identified with $\ker(T-\zeta\Id) \subset H^1(\hat{M},\hat{\Sig},\C)$.

Let $z$ be a vector in $W_o$.
For $i=1,\dots,k$, let $\ell_i:=|z(\g_{i0})|$.
Let $(x_i,y_i)$ be the real coordinates of $z(e_i)$ in the orthonormal basis $(u_i,v_i)$ of $\R^2$, where $u_i$ is the unit vector in the direction of $z(\g_{i0})$.
Note that
$$
x_i=\frac{\mathrm{Re}\left(z(e_i)\ol{z(\g_{i0})}\right)}{\ell_i} \text{ and } y_i =\frac{\mathrm{Im}\left(z(e_i)\ol{z(\g_{i0})}\right)}{\ell_i},
$$
Recall that $U_o$ is the open domain of $W_o \in \C^E$ defined in Section~\ref{sec:opens:n:inv:tria}.
Let $\alpha \in \R_{>0}$ be some fixed constant.
Let $U_o^1(\tilde{\g},\alpha)$ be the set of $z \in U_o$ such that
$$
\left\{
\begin{array}{l}
\Ab(z) \leq 1,  \\
|z(e)| \leq \sqrt{2}\alpha, \;  \text{for all } \; e \in E^*_{\tilde{\g}}\\
|x_i| \leq \ell_i \text{ and } |y_i| < \frac{2}{\ell_i}, \; i=1,\dots,k,
\end{array}
\right.
$$
where $x_i,y_i, \ell_i$ are defined as above.
The following lemma is key for the proof of Theorem~\ref{th:vol:finite:a:diff}.
\begin{Lemma}\label{lm:vol:fin}
Let $\vol$ be a volume form on $W_o$ which is proportional to the Lebesgue measure. Then $\vol(U^1_o(\tilde{\g},\alpha))$ is finite.
\end{Lemma}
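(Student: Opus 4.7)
The plan is to reduce the estimate to an integral on $\C^N$ via the basis furnished by Lemma~\ref{lm:fam:sim:cyc}, and then apply Fubini in the order that separates the ``crossing'' coordinates $z(e_1),\dots,z(e_k)$ from the remaining coordinates $w := (z(e_{k+1}),\dots,z(e_N))$. I would first use the isomorphism $\Phi: W_o \to \C^N$, $z \mapsto (z(e_1),\dots,z(e_N))$, of Lemma~\ref{lm:fam:sim:cyc}(a) to push $\vol$ forward to a constant multiple of the Lebesgue measure on $\C^N \cong \R^{2N}$. It then suffices to bound the Lebesgue volume of $\Phi(U_o^1(\tilde{\g},\alpha))$.

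The crucial observation is Lemma~\ref{lm:fam:sim:cyc}(b): each period $z(\g_{i0})$ is a linear combination of the coordinates of $w$ alone, so $\ell_i = |z(\g_{i0})|$ is a function of $w$ only. Applying Fubini, I would integrate first over $(z(e_1),\dots,z(e_k))$ with $w$ fixed, then over $w$. For fixed $w$ outside the measure-zero locus where some $\ell_i$ vanishes, the orthonormal frame $(u_i,v_i)$ aligned with $z(\g_{i0})$ is well-defined, and the passage from the standard real coordinates of $z(e_i)$ to $(x_i,y_i)$ is a rotation of $\R^2$, hence area-preserving. The constraints $|x_i| \leq \ell_i$ and $|y_i| \leq 2/\ell_i$ therefore confine $(x_i,y_i)$ to a rectangle of area $2\ell_i \cdot (4/\ell_i) = 8$, independently of $\ell_i$. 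Consequently, the inner integral is bounded by $8^k$.

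For the outer integral, the inequality $|z(e)| \leq \sqrt{2}\alpha$ for every $e \in E^*_{\tilde{\g}}$ applies in particular to the basis vectors $e_{k+1},\dots,e_N$, so $w$ is confined to a polydisc in $\C^{N-k}$ of Lebesgue volume at most $(2\pi\alpha^2)^{N-k}$. Multiplying, one obtains $\vol(U_o^1(\tilde{\g},\alpha)) \leq C \cdot 8^k (2\pi\alpha^2)^{N-k} < \infty$, where $C$ is the proportionality constant between $\vol$ and Lebesgue measure. The conceptual heart of the argument, and its only nontrivial input, is the exact cancellation $\ell_i \cdot (1/\ell_i) = 1$ that tames the apparent blow-up of the bound on $|y_i|$ as the cycle $\g_{i0}$ becomes short; this is precisely what allows the volume to stay finite even though $\ell_i$ may be arbitrarily close to $0$. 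The condition $\Ab(z) \leq 1$ plays no role in this estimate, only further shrinking the domain.
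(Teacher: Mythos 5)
Your proof is correct and follows essentially the same route as the paper: reduce to Lebesgue measure on $\C^N$ via the isomorphism of Lemma~\ref{lm:fam:sim:cyc}, confine the coordinates indexed by $E^*_{\tilde{\g}}$ to a polydisc, observe that each $(x_i,y_i)$ lies in a rectangle of area $8$ depending only on $w$, and conclude by Fubini. Your remark about discarding the measure-zero locus where some $\ell_i$ vanishes is a minor point the paper leaves implicit, but the argument is the same.
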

\begin{proof}
By Lemma~\ref{lm:fam:sim:cyc}, we can find $(N-k)$ edges $e_{k+1},\dots,e_{N}$ in $E^*_{\tilde{\g}}$ such that the map
$$
\begin{array}{cccc}
\Phi_o: & W_o & \ra & \C^N\\
      & z & \mapsto & (z(e_1),\dots,z(e_N))
\end{array}
$$
is an isomorphism. Let $\Ucal^1_o(\tilde{\g},\alpha)$ denote the image of $U_o^1(\tilde{\g},\alpha)$ under $\Phi_o$.
Let $\vol_{2N}$ denote the standard Lebesgue measure on $\C^{N}$.
It suffices to show that $\vol_{2N}(\Ucal^1_o(\tilde{\g},\alpha))$ is finite.

By definition, if $(z_1,\dots,z_N) \in \Ucal^1_o(\tilde{\g},\alpha)$, then $|z_i| \leq \sqrt{2}\alpha$,  for $i=k+1,\dots,N$. Set
$$
\Bb^{N-k}(\sqrt{2}\alpha):=\{w=(w_1,\dots,w_{N-k}) \in \C^{N-k}, \; |w_i| \leq \sqrt{2}\alpha, \; i=1,\dots,N-k\}.
$$
Given $w=(w_1,\dots,w_{N-k}) \in \Bb^{N-k}(\sqrt{2}\alpha)$, define
$$
\Ucal^1_o(\tilde{\g},\alpha,w):=\{(z_1,\dots,z_k) \in \C^k, \; (z_1,\dots,z_k,w_1,\dots,w_{N-k}) \in \Ucal^1_o(\tilde{\g},\alpha)\}.
$$
From Lemma~\ref{lm:fam:sim:cyc}, we know that $z(\g_{i0})$ is a linear function of $(w_1,\dots,w_{N-k})$.
Thus $\ell_i$ and the basis $(u_i,v_i)$ depend only on $w$.
Let
$$
R_i=\{z_i\in \C, \, z_i=(x_i,y_i) \text{ in the basis } (u_i,v_i), \text{ where } |x_i| \leq \ell_i, \; |y_i| \leq \frac{2}{\ell_i}\}.
$$
Observe that $R_i$ is a rectangle in $\C$ whose area is equal to $2\ell_i\times\frac{4}{\ell_i}=8$.
By definition, $\Ucal^1_o(\tilde{\g},\alpha,w)$ is contained in $R_1\times\dots\times R_k$.
It follows that $\vol_{2k}(\Ucal^1_o(\tilde{\g},\alpha,w)) < 8^k$, where $\vol_{2k}$ is the Lebesgue measure of $\C^k$.
Since $\Bb^{N-k}(\sqrt{2}\alpha)$ clearly has finite volume in $\C^{N-k}$ (with respect to the Lebesgue measure), the lemma follows from Fubini Theorem.
\end{proof}

\subsection{Proof of Theorem~\ref{th:vol:finite:a:diff}}
\begin{proof}
In what follows, we fix  $\alpha =  2\sqrt{\frac{2}{\pi}}$.
Let $(\hat{X},\hat{\omega},\tau)$ be an element of $\ilstrate$.
Our goal is to show that there is $o\in \Alp$, and an admissible family of simple cycles $\tilde{\g}$ in the dual graph of the triangulation associated to $o$ such that $(\hat{X},\hat{\omega},\tau)\in \Psi(U^1_o(\tilde{\g},\alpha))$, where $U^1_o(\tilde{\g},\alpha)$ is defined as in Section~\ref{sec:sim:cycles:fin:vol}.


By Proposition~\ref{prop:inv:Delaunay:tria}, there is a Delaunay triangulation of $(\hat{X}, \hat{Z}(\hat{\omega}))$ which is invariant under the action of $\tau$.  Pullback this triangulation via a homeomorphism  $\hat{f}: (\hat{M},\hat{\Sig})\ra (\hat{X},\hat{Z}(\hat{\omega}))$,  we get a triangulation  $\hat{\Tcal}$ of $(\hat{M},\hat{\Sig})$. The pullback of $\tau$ by $\hat{f}$ is a homeomorphism $T$ of $\hat{M}$ that preserves $\hat{\Tcal}$.  The triangulation $\hat{\Tcal}$ and the homeomorphism $T$ then provide us with an element $o=(G,T) \in \Alp$.
In what follows, we will identify $\hat{\Tcal}$ with a Delaunay triangulation of $(\hat{X},\hat{Z}(\hat{\omega}))$.

Let $W_o$ and $U_o$ be as in Section~\ref{sec:opens:n:inv:tria}.
Let $z$ denote the vector in $\C^{E}\simeq \C^{2N_1}$ which is defined by $z(e)=\int_{e}\hat{\omega}$.
By definition, we have $z\in U_o$ and $A:=\Aa(\hat{X},\hat{\omega})=\Ab(z)\leq 1$.

\medskip

Recall that a cylinder on $\hat{X}$ is said to be long if its height is greater than $\alpha\sqrt{A}$ (see Def.\ref{def:long:cyl}).
Let $\tilde{C}$ denote the family of all long cylinders in $\hat{X}$.
By  Lemma~\ref{lm:long:cyls:disjoint}, we know that the cylinders in $\tilde{C}$ are pairwise disjoint.
Since $\tau$ is an isometry of $(\hat{X},\hat{\omega})$, the set  $\tilde{C}$ is invariant under $\tau$.
 \begin{Claim}\label{clm:long:cyl:distinct}
 The action of $\tau$ on $\tilde{C}$ is free.
 \end{Claim}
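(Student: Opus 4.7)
The plan is to argue by contradiction: suppose $\tau^k(C) = C$ for some long cylinder $C \in \tilde{C}$ and some $k \in \{1,\dots,d-1\}$. I will derive a fixed point of $\tau^k$ in the interior of $C$, contradicting the fact that $\langle \tau \rangle$ acts freely on $\hat{X}\setminus\hat{Z}(\hat{\omega})$.

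First I would record the local form of $\tau^k$. Since $\tau^*\hat{\omega}=\zeta\hat{\omega}$, in any flat chart $z$ with $\hat{\omega}=dz$ the map $\tau$ reads $z \mapsto \zeta z + c$, so $\tau^k$ is locally $z \mapsto \zeta^k z + c_k$, i.e.\ a Euclidean rotation by the angle $\arg(\zeta^k)$ composed with a translation. In particular $\tau^k$ acts on the constant directions of $\R^2\simeq\C$ by multiplication by $\zeta^k\in\S^1$.

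Next I would exploit the cylinder geometry: if $\tau^k(C)=C$, then $\tau^k_{|C}$ is an orientation-preserving isometry of the flat cylinder $C$, so it preserves the canonical foliation of $C$ by closed geodesics (the core curves). Consequently the axis direction of $C$ is preserved up to sign, forcing $\zeta^k\in\{1,-1\}$. Since $\zeta$ is a primitive $d$-th root of unity and $1\leq k\leq d-1$, the case $\zeta^k=1$ is excluded. The case $\zeta^k=-1$ forces $d$ even and $k=d/2$; parametrizing $C$ as $(\R/\ell\Z)\times (0,h)$ with $\hat{\omega}=dx+\imath\, dy$, the map $\tau^{d/2}_{|C}$ is a Euclidean half-turn preserving the horizontal strip, necessarily of the form $(x,y)\mapsto (-x+a,\, h-y)$, which fixes the interior point $(a/2,h/2)\in C$. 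This yields the required contradiction.

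The step I expect to be the most delicate is the second one, i.e.\ the claim that an orientation-preserving isometry of $C$ must preserve its foliation by core curves. The cleanest way I would handle this is to characterize the foliation intrinsically: the core curves are (up to free homotopy) the unique simple closed geodesics in $C$, or equivalently the leaves of the unique foliation of $C$ by closed geodesics, and either characterization is preserved by any isometry of the flat metric. The rest of the argument is routine flat-geometric bookkeeping once the form of $\tau^k$ and this foliation-invariance are in hand.
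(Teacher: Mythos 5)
Your proof is correct and follows essentially the same route as the paper: both reduce to $\zeta^k\in\{\pm1\}$ (the paper via the period of a core curve, $z(\tau^i(c))=\zeta^i z(c)=\pm z(c)$; you via invariance of the foliation by core curves, which is the same fact), both rule out $\zeta^k=1$ by primitivity, and both dispose of $\zeta^k=-1$ by exhibiting a fixed point of the half-turn in the interior of $C$, contradicting the freeness of the $\langle\tau\rangle$-action off $\hat{Z}(\hat{\omega})$.
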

 \begin{proof}
 Let $C$ be a long cylinder and $c$ a core curve of $C$.
 Then $\tau^i(c)$ is a core curve of $\tau^i(C), \; i=0,\dots,d-1$.
 By assumption, we have $z(\tau^i(c))=\zeta^i z(c)$.
 If $\tau^i(C)=C$ then $\tau^i(c)$ must be a core curve of $C$, which means that $\zeta^i\in \{\pm1\}$.
 Since $\zeta^i=1$ only if $i=0 \mod d$, we must have $\zeta^i=-1$.
 This implies that $\tau$ is an isometry of the cylinder $C$ whose derivative is $-\id$.
 In this case, $\tau$ must have two fixed points in the interior of $C$.
 But by definition, a fixed point of $\tau$ must belong to $\hat{\Sig}$, and $C\cap\hat{\Sig}=\varnothing$.
 We then have a contradiction which proves the claim.
 \end{proof}

Claim~\ref{clm:long:cyl:distinct} implies that we can write
 $$
 \tilde{C}:=\{C_{ij}, \; i=1,\dots,k, \; j=0,\dots,d-1\}
 $$
where $C_{ij}=\tau^{j}(C_{i0})$.
By Lemma~\ref{lm:sc:cross:long:cyl}(iii), a core curve $c_{ij}$ of $C_{ij}$ intersects any edge of $\hat{\Tcal}$ at most once, thus it is dual to a simple cycle $\g_{ij}$ in the dual graph $\hat{\Tcal}^\vee$ of $\hat{\Tcal}$.

\begin{Claim}\label{clm:sim:cyc:disjoint}
The simple cycles $\{\g_{ij}, \; i=1,\dots,k, \, j=0,\dots,d-1\}$ are pairwise disjoint.
\end{Claim}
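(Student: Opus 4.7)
\textbf{Proof plan for Claim~\ref{clm:sim:cyc:disjoint}.}

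The plan is to argue by contradiction, using the fact that disjointness of simple cycles in the dual graph $\hat{\Tcal}^\vee$ is equivalent to sharing no vertex, and then exploiting the uniqueness part of Lemma~\ref{lm:sc:cross:long:cyl}(iii) at the level of a single triangle of $\hat{\Tcal}$. Concretely, I would suppose that $\g_{ij}$ and $\g_{i'j'}$ share a vertex $\theta^\vee$ of $\hat{\Tcal}^\vee$ for some $(i,j)\neq (i',j')$, and let $\theta$ be the triangle of $\hat{\Tcal}$ dual to $\theta^\vee$.

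By the construction of $\g_{ij}$ as the dual of $c_{ij}$, the fact that $\theta^\vee$ lies on $\g_{ij}$ means that the core curve $c_{ij}$ enters the triangle $\theta$, and similarly $c_{i'j'}$ enters $\theta$. Since each $c_{ij}$ is a closed geodesic (a straight segment inside each triangle it meets) and, by Lemma~\ref{lm:sc:cross:long:cyl}(iii) applied to the cylinder $C_{ij}$, meets each edge of $\hat{\Tcal}$ at most once, the trace of $c_{ij}$ in $\theta$ consists of a single straight segment joining two distinct sides of $\theta$; in particular $c_{ij}$ crosses exactly two edges of $\theta$, and likewise for $c_{i'j'}$.

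Now by Claim~\ref{clm:long:cyl:distinct} the cylinders $C_{ij}$ and $C_{i'j'}$ are distinct long cylinders. By Lemma~\ref{lm:sc:cross:long:cyl}(iii) each edge of $\hat{\Tcal}$ crosses \emph{at most one} long cylinder, so the two edges of $\theta$ crossed by $c_{ij}$ are disjoint from the two edges of $\theta$ crossed by $c_{i'j'}$. This forces $\theta$ to have at least four distinct edges, contradicting the fact that a triangle has only three. Hence no two distinct $\g_{ij}$ share a vertex of $\hat{\Tcal}^\vee$, and since two simple cycles sharing an edge must share its endpoints, they also share no edges, i.e.\ they are pairwise disjoint as subsets of $\hat{\Tcal}^\vee$.

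The only mildly delicate point is to confirm that the intersection of a core curve with a single triangle is indeed a single straight segment using two distinct sides; this follows from the argument above combined with the geodesic (straight-line) nature of $c_{ij}$ inside each flat triangle of $\hat{\Tcal}$, so no real obstacle is expected. Everything else is a direct appeal to Lemma~\ref{lm:sc:cross:long:cyl}(iii) and Claim~\ref{clm:long:cyl:distinct}.
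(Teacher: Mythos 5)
Your argument is correct and is essentially the paper's own proof: the paper also observes that a shared vertex of the trivalent dual graph would force the two cycles to share an edge (your ``four edges in a triangle'' count is the same pigeonhole), and then invokes Lemma~\ref{lm:sc:cross:long:cyl}(iii) together with the distinctness of the cylinders from Claim~\ref{clm:long:cyl:distinct} to get the contradiction. The geometric digression about the core curve tracing a single straight segment in each triangle is unnecessary --- a simple cycle through a valency-$3$ vertex automatically uses exactly two of its three incident edges --- but it does no harm.
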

\begin{proof}
 Indeed, if $\g_{ij}$ and $\g_{i'j'}$ share a common vertex (in $\hat{\Tcal}^\vee$), then since every vertex of $\hat{\Tcal}^\vee$ has valency $3$, $\g_{ij}$ and $\g_{i'j'}$ share a common edge, say $e$. This means that $e$ crosses two cylinders with height greater than $\alpha\sqrt{A}$. By Lemma~\ref{lm:sc:cross:long:cyl}(iii),  this is impossible.
\end{proof}

Claim~\ref{clm:sim:cyc:disjoint} implies that $\tilde{\g}:=\{\g_{ij}, \; i=1,\dots,k, \; j=0,\dots,d-1\}$ is an admissible family of simple cycles in $\hat{\Tcal}^\vee$.
Let $E_{\g_{ij}}, E_{\tilde{\g}},E^*_{\tilde{\g}}$, and $(e_1,\dots,e_{k})$ be as in Section~\ref{sec:encode:long:cyl}.

\begin{Claim}\label{clm:short:sc}
For all $e\in E^*_{\tilde{\g}}$, $|z(e)| \leq \sqrt{2}\alpha$.
\end{Claim}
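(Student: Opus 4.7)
The plan is to argue by contradiction using Proposition~\ref{prop:long:sc:high:cyl}. Suppose toward contradiction that some $e \in E^*_{\tilde{\g}}$ satisfies $|z(e)| > \sqrt{2}\alpha$. Since $\sqrt{2}\alpha > \alpha = 2\sqrt{2/\pi}$, the hypothesis of Proposition~\ref{prop:long:sc:high:cyl} is met: viewing $e$ as a saddle connection of $(\hat{X},\hat{\omega})$ (recall that $\hat{\Tcal}$ is a Delaunay triangulation and $A = \Aa(\hat{X},\hat{\omega}) \leq 1$), the edge $e$ must cross a cylinder $C$ whose height $h$ is greater than its circumference, and one has the two-sided bound $h \leq |z(e)| \leq \sqrt{2}h$.

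Next I would use the lower estimate $h \geq |z(e)|/\sqrt{2} > \alpha$ together with $\sqrt{A} \leq 1$ to deduce $h > \alpha\sqrt{A}$, so that $C$ is a \emph{long} cylinder in the sense of Definition~\ref{def:long:cyl}. Hence $C$ belongs to the family $\tilde{C} = \{C_{ij}\}$ of long cylinders enumerated just before Claim~\ref{clm:sim:cyc:disjoint}, say $C = C_{ij}$.

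The only remaining step is bookkeeping about the duality. By construction, the simple cycle $\g_{ij}\subset\hat{\Tcal}^\vee$ is the Poincar\'e dual of a core curve $c_{ij}$ of $C_{ij}$; in particular, the edges of $\hat{\Tcal}$ crossing $c_{ij}$ are precisely those whose dual edges lie in $\g_{ij}$. Since $e$ crosses $C_{ij}$ (and hence its core curve $c_{ij}$, as $e$ is a Delaunay edge, by Lemma~\ref{lm:sc:cross:long:cyl}(iii) it crosses $C_{ij}$ exactly once), the dual of $e$ belongs to $\g_{ij}$, so $e \in E_{\g_{ij}} \subset E_{\tilde{\g}}$. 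This contradicts our assumption that $e \in E^*_{\tilde{\g}} = E \setminus E_{\tilde{\g}}$, and the claim is proved.

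There is essentially no real obstacle here: the argument is a one-shot application of Proposition~\ref{prop:long:sc:high:cyl}. The only point that requires care is keeping straight the definition of $E_{\tilde{\g}}$ via duality, so as to turn the geometric statement ``$e$ crosses a long cylinder'' into the combinatorial statement ``$e \in E_{\tilde{\g}}$''; this is immediate from the definitions of $\g_{ij}$ and $E_{\g_{ij}}$ recalled in Section~\ref{sec:encode:long:cyl}.
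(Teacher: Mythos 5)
Your proof is correct and follows essentially the same route as the paper: apply Proposition~\ref{prop:long:sc:high:cyl} to get a cylinder $C$ with $h \geq |z(e)|/\sqrt{2} > \alpha \geq \alpha\sqrt{A}$, conclude $C$ is long and hence equals some $C_{ij}$, and then use the duality between $c_{ij}$ and $\g_{ij}$ to place $e$ in $E_{\g_{ij}}$, contradicting $e \in E^*_{\tilde{\g}}$. The only difference is that you spell out the bookkeeping step (crossing the cylinder $\Rightarrow$ crossing the core curve $\Rightarrow$ dual edge in $\g_{ij}$) which the paper leaves implicit.
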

\begin{proof}
If $|z(e)| > \sqrt{2}\alpha$, then by Proposition~\ref{prop:long:sc:high:cyl}, $e$ must cross a cylinder $C$ with height $h$ such that
$|z(e)| < \sqrt{2}h$. It follows that, $h > \alpha$. Hence $C \in \tilde{C}$, and there exist $\g_{ij}\in \tilde{\g}$ such that $e$ is dual to an edge contained in $\g_{ij}$. This  contradicts the hypothesis that $e\in E^*_{\tilde{\g}}$.
\end{proof}

Let $h_i$ and $\ell_{i}$ denote the height and circumference of $C_{i0}$ respectively.
Since $\tau$ is an isometry, the height and circumference of $C_{ij}$ are also  equal to  $h_i$ and $\ell_i$ respectively.

Let $(x_i,y_i)$ be the coordinates of $z(e_i)$ in the orthonormal basis $(u_i,v_i)$ of $\R^2$, where $u_i$ is the unit vector in the direction of $z(\g_{i0})$.
\begin{Claim}\label{clm:long:cyl:area}
We have
\begin{equation}\label{eq:long:cyl:area}
|x_i| \leq \ell_i \quad \text{ and } \quad |y_i| < \frac{2}{\ell_i}.
\end{equation}
\end{Claim}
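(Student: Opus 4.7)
My plan is to apply Lemma~\ref{lm:sc:cross:long:cyl} directly to the edge $e_i$, which by construction belongs to $E_{\g_{i0}}$ and therefore crosses the long cylinder $C_{i0}$. First I would verify that the orthonormal basis $(u_i,v_i)$ is exactly the one appearing in Lemma~\ref{lm:sc:cross:long:cyl}(i): the simple cycle $\g_{i0}$ in $\hat{\Tcal}^\vee$ is, by construction, dual to the core curve $c_{i0}$ of $C_{i0}$, so the two are homologous in $H_1(\hat{M},\Z)$. Consequently $z(\g_{i0})=\int_{c_{i0}}\hat{\omega}$ is a period vector of length $\ell_i$ lying in the direction of the core curves of $C_{i0}$. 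Hence $u_i$ coincides with the unit vector playing the role of $v_1$ in Lemma~\ref{lm:sc:cross:long:cyl}(i) applied to the pair $(e_i,C_{i0})$, and the bound $|x_i|\leq \ell_i$ follows immediately.

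For the bound on $|y_i|$, I would combine Lemma~\ref{lm:sc:cross:long:cyl}(ii) with elementary area estimates. Since $|e_i|^2=x_i^2+y_i^2$ we have $|y_i|\leq |z(e_i)|< h_i+\sqrt{A}/\alpha^3$ by (ii). The cylinder $C_{i0}$ is embedded in $\hat{X}$, so its area satisfies $h_i\ell_i\leq A\leq 1$, which gives $h_i\leq A/\ell_i\leq 1/\ell_i$. It therefore suffices to show that $\sqrt{A}/\alpha^3\leq 1/\ell_i$. Because $C_{i0}$ is long, $h_i>\alpha\sqrt{A}$, and combining this with $h_i\ell_i\leq A$ yields $\ell_i<\sqrt{A}/\alpha\leq 1/\alpha$. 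Since $\alpha=2\sqrt{2/\pi}>1$, we have $1/\alpha<1<\alpha^3$, so $\ell_i<\alpha^3$, which together with $A\leq 1$ gives $\sqrt{A}/\alpha^3\leq 1/\alpha^3\leq 1/\ell_i$. Adding the two contributions, $|y_i|<h_i+\sqrt{A}/\alpha^3\leq 1/\ell_i+1/\ell_i=2/\ell_i$, with strict inequality inherited from (ii), which is the required bound.

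I do not anticipate a genuine obstacle here: the whole argument is bookkeeping that reduces to Lemma~\ref{lm:sc:cross:long:cyl} together with the trivial area inequality $h_i\ell_i\leq A$ and the numerical fact $\alpha>1$. The only point demanding a bit of care is the identification of the basis $(u_i,v_i)$ with the one prescribed by Lemma~\ref{lm:sc:cross:long:cyl}(i), namely checking that $z(\g_{i0})$ really points along the core curves of $C_{i0}$; this is an immediate consequence of the topological duality between $\g_{i0}$ and $c_{i0}$ in $\hat{M}$.
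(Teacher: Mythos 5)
Your proof is correct and follows essentially the same route as the paper: part (i) of Lemma~\ref{lm:sc:cross:long:cyl} gives $|x_i|\leq\ell_i$ once one notes that $z(\g_{i0})$ is the holonomy of the core curve $c_{i0}$, and part (ii) combined with the area bound $h_i\ell_i\leq A\leq 1$ gives $|y_i|<2/\ell_i$. The only (immaterial) difference is bookkeeping: the paper bounds $h_i+\sqrt{A}/\alpha^3<2h_i$ and then uses $2h_i\ell_i\leq 2$, whereas you bound each of the two terms by $1/\ell_i$ separately.
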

\begin{proof}
That $|x_i| \leq \ell_i$ follows from Lemma~\ref{lm:sc:cross:long:cyl} (i). To see that $|y_i| \leq \frac{2}{\ell_i}$, we remark that $|y_i| \leq |z(e_i)| < h_i+\frac{\sqrt{A}}{\alpha^3}$ (by Lemma~\ref{lm:sc:cross:long:cyl}(ii)).
Since $h_i >\alpha\sqrt{A} > \frac{\sqrt{A}}{\alpha^3}$, we get $|y_i|<2h_i$.
Now
$$
|y_i|\ell_i < 2h_i\ell_i = 2\Aa(C_{i0}) \leq 2\Aa(\hat{X},\hat{\omega}) \leq 2,
$$
and the claim follows.
\end{proof}
Claim~\ref{clm:short:sc} and Claim~\ref{clm:long:cyl:area} imply that $z\in U^1_o(\tilde{\g},\alpha)$, or equivalently $(\hat{X},\hat{\omega},\tau) \in \Psi(U^1_o(\tilde{\g},\alpha))$.
By Lemma~\ref{lm:vol:fin}, the volume of $\Psi(U^1_o(\tilde{\g},\alpha))$ is finite.
Since $\Alp$ is finite, and $\tilde{\g}$ belongs to a finite set, we deduce that the total volume of $\ilstrate$ is finite.
\end{proof}

\subsection*{Proof of Theorem~\ref{thm:main}}
\begin{proof}
We can now conclude the proof of Theorem~\ref{thm:main}. By  Proposition~\ref{prop:vol:form:def}, $d\vol$ is a well defined volume form on $\strate$ which is parallel with respect to  its affine manifold structure.  Theorem~\ref{th:vol:finite:a:diff} implies that $\vol(\ilstrate)$ is finite. By definition $\vol_1(\Pb\strate)=\vol(\ilstrate)$, hence $\vol_1(\Pb\strate)$ is finite.
\end{proof}

\appendix

\section{Proof of Corollary~\ref{cor:dim:Vzeta}}\label{sec:proof:dim:Vzeta}
\begin{proof}
In what follows, we use the notation of Section~\ref{sec:loc:coord:per}.
If $d=1$, the equations of type \eqref{eq:lin:equa:sym} are all trivial (since $T_\zeta=\id$). Thus the system $(\Scal)$ contains only equations of type \eqref{eq:lin:equa:tria}, hence the space $V$ is isomorphic to $H^1_{\hat{\Tcal}}(\hat{M},\hat{\Sig},\C)=H^1_{\Tcal}(M,\Sig,\C)$. Therefore $N=\dim_\C H^1(M,\Sig,\C)= 2g+n-1$.

Assume from now on that $d>1$. Set $n_1=\card(\Tcal^{(1)})$ and $n_2=\card(\Tcal^{(2)})$. It follows from the Euler characteristic formula for $M$ that
$$
n_1=3(2g+n-2), \quad n_2=2(2g+n-2).
$$

Let $\hat{E}$ be a subset of $\hat{\Tcal}^{(1)}$ such that each $T_\zeta$-orbit in $\hat{\Tcal}^{(1)}$ has unique representative in $\hat{E}$.
Note that $\card(\hat{E})=n_1$.
Consider elements of $C^{N_1}$ as maps from $\hat{\Tcal}^{(1)}$ to $\C$.
It is clear that if $v \in V_2$, then $v$ is uniquely determined by its restriction to $\hat{E}$.
Thus $\dim V_2={n_1}$.

Let $\hat{F}$ be a subset of $\hat{\Tcal}^{(2)}$ such that each $T_\zeta$-orbit in $\hat{\Tcal}^{(2)}$ has a unique representative in $\hat{F}$.
Let $(\Scal'_1)$ denote the system of linear equations associated to the elements of $\hat{F}$.
We claim that $V$ is equal to the space of solutions to the system $(\Scal'_1)\sqcup (\Scal_2)$.
To see this, let $v$ be vector in $V_2$,  and assume that $v$ is a solution of $(\Scal'_1)$.
For any $\theta \in \Tcal^{(2)}$ there exist $\hat{\theta} \in \hat{F}$, and $k\in \{0,\dots,d-1\}$ such that $\theta=T^k_\zeta(\hat{\theta})$.
Let $\hat{e}_{1},\hat{e}_{2},\hat{e}_{3}$ denote the sides of $\hat{\theta}$, then $T^k_\zeta(\hat{e}_{1}), T^k_\zeta(\hat{e}_{2}),T^k_\zeta(\hat{e}_{3})$ are the sides of $\theta$.
We have $v(T^k_\zeta(\hat{e}_{j}))=\zeta^k v(\hat{e}_{j})$ since $v$ satisfies $(\Scal_2)$, and  $\pm v(\hat{e}_{1})\pm v(\hat{e}_{2})\pm v(\hat{e}_{3})=0$ since  $v$ satisfies $(\Scal'_1)$.
Therefore
$$
\pm v(T^k_\zeta(\hat{e}_{1})) \pm v(T^k_\zeta(\hat{e}_{2})) \pm v(T^k_\zeta(\hat{e}_{3}))=\zeta^k\left(\pm v(\hat{e}_{1})\pm v(\hat{e}_{2})\pm v(\hat{e}_{3})\right)=0
$$
which means that $v$ satisfies all the equations in $(\Scal_1)$. It follows that the space of solutions to $(\Scal'_1)\sqcup(\Scal_2)$ is equal to the space of solutions to $(\Scal_1)\sqcup(\Scal_2)$, that is $V$.
Observe that $(\Scal'_1)$ contains exactly $n_2$ equations. Hence
\begin{equation}\label{eq:dim:V:lower:bd}
\dim V \geq \dim V_2-n_2=n_1-n_2=2g+n-2.
\end{equation}
Consider now the dual graph $\Tcal^\vee$ of $\Tcal$. Let $\Tcal^\vee_0$ be a subgraph of $\Tcal^\vee$ which is a tree, and contains all the vertices of $\Tcal^\vee$. We will call $\Tcal^\vee_0$ a maximal subtree of $\Tcal^\vee$. Let $E_0$ denote the set of edges of $\Tcal$ whose dual is not contained in $\Tcal^\vee_0$. From the Euler characteristic formula, we have $\card(E_0)=2g+n-1$. Let $\Ecal_0 \subset M$ be the union of the edges in $E_0$.

Observe that if we cut $M$ along the edges in $E_0$, then the resulting surface is a topological disc. Thus there is a continuous surjective map
$\varphi: D \ra M$, where $D$ is a closed disc in the plan,   such that the restriction of $\varphi$ to $\inter(D)$ is an embedding, and $\varphi(\partial D)=\Ecal_0$. For each $e\in E_0$, there are two disjoint subintervals $a,a'$ of $\partial D$ such that $\varphi(a)=\varphi(a')=e$, and the restrictions of $\varphi$ to $\inter(a)$ and to $\inter(a')$ are injective.
Therefore, $\partial D$ is decomposed into $2(2g+n-1)$ subintervals together with a pairing defined by the condition: two subintervals are paired if they are mapped to the same edge in $E_0$. Let us denote those subintervals by $a_1,\dots,a_{2(2g+n-1)}$ such that $a_i$ and $a_{2g+n-1+i}$ are paired. We choose the orientation of $a_i$ to be the one induced by the orientation of $D$.

We can lift $\varphi$ to a map $\hat{\varphi}: D \ra \hat{M}$. Note that $\hat{M}_0=\hat{\varphi}(D)$ is a fundamental domain for the action of $\langle T_\zeta \rangle$ on $\hat{M}$. For all $i\in \{1,\dots,2(2g+n-1)\}$, $\hat{\varphi}(a_i)$ is an edge of $\hat{\Tcal}$ which will be denoted by $e_i$. We also endow $e_i$ with the orientation induced by $a_i$.

Let $v$ be a vector in $V$.  For $i=1,\dots, 2g+n-1$, since $e_i$ and $e_{2g+n-1+i}$ project to the same edge of $\Tcal$, there is $r_i\in \{0,\dots,d-1\}$ such that $e_{2g+n-1+i}=-T_\zeta^{r_i}(e_i)$.
Hence $v(e_{2g+n-1+i})=-\zeta^{r_i}v(e_i)$. Since $\{e_1,\dots,e_{2(2g+n-1)}\}$ form the boundary of a disc, we have
\begin{equation}\label{eq:bdry:disc}
 \sum_{i=1}^{2(2g+n-1)}v(e_i)=\sum_{i=1}^{2g+n-1} (1-\zeta^{r_i})v(e_i)=0.
\end{equation}
If $r_i=0$, then $e_{2g+n-1+i}=-e_i$, which means that $e_i$ and $e_{2g+n-1+i}$ are the same edge in $\hat{\Tcal}$ with the inverse orientations.
This happens if and only if $\inter(e_i)$ is contained in the interior of the subsurface $\hat{M}_0$.
If $r_i=0$ for all $i\in \{1,\dots,2g+n-1\}$ then $\hat{M}_0$ is a subsurface of $\hat{M}$ without boundary, which means that $\hat{M}_0=\hat{M}$. But this is impossible in the case $d\geq 2$. Hence there must exist $i \in \{1,\dots,2g+n-1\}$ such that $\zeta^{r_i}\neq 1$. As a consequence, equation \eqref{eq:bdry:disc} is not trivial.

We now claim that $v$ is uniquely determined by $(v(e_1),\dots,v(e_{2g+n-1}))$. Indeed,  any $e \in \hat{\Tcal}^{(1)}$ belongs to the $T_\zeta$-orbit of some edge $\hat{e}$ contained in $\hat{M}_0$. If $\hat{e}$ is contained in $\hat{\varphi}(\partial D)$, then $\hat{e} = e_i$ for some $i\in \{1,\dots,2(2g+n-1)\}$. Otherwise,  $\hat{e}$ is  homologous to a combination of some edges in $\hat{\varphi}(\partial D)$. In all cases, $v(e)=\zeta^k v(\hat{e})$, where $v(\hat{e})$ can be written as a linear function of $(v(e_1),\dots,v(e_{2g+n-1}))$.
The claim implies that the restriction of the map
$$
\begin{array}{cccc}
 \phi: & \C^{N_1} & \ra & \C^{2g+n-1}\\
       & \{v(e), \, e \in \Tcal^{(1)}\} & \mapsto & (v(e_1),\dots,v(e_{2g+n-1}))
\end{array}
$$
to $V$ is injective. Since $\phi(V)$ is contained in the subspace of $\C^{2g+n-1}$ defined by equation~\eqref{eq:bdry:disc}, we get
\begin{equation}\label{eq:dim:V:upper:bd}
\dim V \leq 2g+n-2.
\end{equation}
From \eqref{eq:dim:V:lower:bd} and \eqref{eq:dim:V:upper:bd} we conclude that $\dim V=2g+n-2$.
\end{proof}




\begin{thebibliography}{ABC9}

\bibitem{AEZ_ann_ENS} J.S. Athreya, A. Eskin, and A. Zorich: Right-angled billiards and volumes of moduli spaces of quadratic differentials on $\mathbb{CP}^1$. With an appendix by Jon Chaika. {\em Ann. Sci. de l'E.N.S.} (4) {\bf 49} (2016), no. 6, 1311--1386.

\bibitem{BCGGM1} M. Bainbridge, D. Chen, Q. Gendron, S. Grushevsky, M. M\"oller: Compactification of strata of Abelian differentials, {\em Duke. Math. J.} {\bf 167} (2018), no. 12, 2347-2416.

\bibitem{BCGGM2} M. Bainbridge, D. Chen, Q. Gendron, S. Grushevsky, M. M\"oller: Strata of $k$-differentials, {\em Algebraic Geometry 6} (2019), no.2, 196--233.  

\bibitem{Engel-I} P. Engel: Hurwitz Theory of elliptic orbifold I, {\em Geometry \& Topology} {\bf 25} (2021), 229--274. 

\bibitem{ES18} P.~Engel and P.~Smillie: The number of convex tilings of the sphere by triangles, squares, or hexagons, {\em Geometry \& Topology 22} (2018), 2839--2864.

\bibitem{EV92} H.~Esnault and E.~ Viehweg: {\em Lectures on vanishing theorems. Notes grew out of the DMV-seminar on algebraic geometry, held at Reisensburg, October 13-19, 1991}. Basel: Birkh\"auser Verlag, 1992.

\bibitem{FM12} B.~Farb and D.~ Margalit: A primer on Mapping Class Groups, {\em Princeton Mathematical  Series} 49,  Princeton University Press (2012).

\bibitem{FP18} G.~Farkas and R.~Pandharipande: The moduli space of twisted canonical differentials, {\em Journal of the Inst. Math. Jussieu}, {\bf 17} (2018), 615--672.

\bibitem{GP17} S.~Ghazouani and L.~Pirio: Moduli spaces of flat tori with prescribed holonomy, {\em Geom. Funct. Anal.} 27 (2017), no.6, 1289--1366.

\bibitem{HM79} J. Hubbard and H. Masur: Quadratic differentials and foliations, {\em Acta Math.} {\bf 142} (1979),  no.3-4, 221--274.


\bibitem{Lab17} F.~Labourie:  Cyclic surfaces and Hitchin components in rank 2, {\em Annals of Math.} (2) {\bf 185} (2017), no.1, 1--58.

\bibitem{LZ} S. Lando and A. Zvonkin: Graphs on surfaces and their applications, {\em Encyc. Math. Sci.}, {\bf 141} (2004).

\bibitem{McM_cyclic} C. McMullen: Braid groups and Hodge theory, {\em Math. Ann.} 355 (2013), no.3, 893--946.

\bibitem{MS91} H.~Masur and J.~Smillie: Hausdorff dimension of sets of nonergodic measured foliations, {\em Annals of Math.} {\bf 134} (1991),  455--543.

\bibitem{MT02} H.~Masur and  S.~Tabachnikov: Rational  billiards  and   flat  structures,  {\em Handbook  of dynamical  systems, 1A},  {North-Holland,  Amsterdam} (2002),  1015--1089.


\bibitem{Ng1} D.-M. Nguyen: Triangulations and volume form on moduli spaces of flat surfaces, {\em Geom. Funct. Anal.} 20 (2010), no.1, 192--228.

\bibitem{Ng2} D.-M. Nguyen: Energy function on moduli spaces of flat surfaces with erasing forest, {\em Math. Ann.} 353 (2012), no.3, 997--1036.

\bibitem{Sch16} J. Schmitt: Dimension theory of the moduli space of twisted $k$-differentials, {\em Documenta Mathematica 23} (2018), 871--894. 

\bibitem{Th98} W.P.~Thurston: Shapes of polyhedra, {\em Geom. \& Top. Monogr.}, Vol. 1: The Epstein birthday schrift, 511-549, Geom. Topol. Publ., Coventry (1998).

\bibitem{Veech86} W.A. Veech: The Teichm\"uller geodesic flow, {\em Annals of Math. (2)} {\bf 124} (1986), 441--530.

\bibitem{Veech90} W.A. Veech: Moduli space of quadratic differentials, {\em J. Analyse Math.} {\bf 55} (1990), 117--171.

\bibitem{Veech:FS} W.A. Veech: Flat surfaces, {\em Amer. J. Math.} {\bf 115} (1993), no.3, 589--689.

\bibitem{Z:survey} A.~Zorich: {Flat surfaces},  {\em Frontiers   in  number  theory,   physics,  and  geometry}, Springer, Berlin (2006), 437--583.
\end{thebibliography}
\end{document}